\numberwithin{equation}{section}
\numberwithin{figure}{section}
\theoremstyle{plain}
\newtheorem{thm}{\protect\theoremname}[section]
\theoremstyle{remark}
\newtheorem{rem}[thm]{\protect\remarkname}
\theoremstyle{definition}
\newtheorem{defn}[thm]{\protect\definitionname}
\theoremstyle{plain}
\newtheorem{cor}[thm]{\protect\corollaryname}
\theoremstyle{plain}
\newtheorem{prop}[thm]{\protect\propositionname}
\theoremstyle{plain}
\newtheorem{lem}[thm]{\protect\lemmaname}
\theoremstyle{plain}
\newtheorem{fact}[thm]{\protect\factname}
\theoremstyle{definition}
\newtheorem{example}[thm]{\protect\examplename}
\pgfplotsset{compat=1.15}
\renewcommand{\d}{\mathrm{d}}
\renewcommand{\rho}{\varrho}
\newcommand{\1}{\mathbbm{1}} 
\newcommand{\D}{\mathcal{D}}
\DeclareMathOperator{\dom}{dom}
\DeclareMathOperator{\spann}{span}
\DeclareMathOperator{\supp}{supp}
\DeclareMathOperator{\card}{card}
\DeclareMathOperator{\diam}{diam}
\renewcommand{\tilde}{\widetilde}
\renewcommand{\emptyset}{\varnothing}
\renewcommand{\phi}{\varphi}
\renewcommand{\epsilon}{\varepsilon}
\def\N{\mathbb N}
\def\d{\;\mathrm d}
\def\R{\mathbb{R}}
\def\Z{\mathbb{Z}}
\def\E{\mathcal{E}}
\def\Q{\mathfrak{Q}}
\def\K{\mathcal{K}}
\def\J{\mathfrak{J}}
\def\GL{\tau}
\definecolor{lime}{HTML}{A6CE39}
\DeclareRobustCommand{\orcidicon}{%
	\begin{tikzpicture}
	\draw[lime, fill=lime] (0,0) 
	circle [radius=0.16] 
	node[white] {{\fontfamily{qag}\selectfont \tiny ID}};
	\draw[white, fill=white] (-0.0625,0.095) 
	circle [radius=0.007];
	\end{tikzpicture}
	\hspace{-2mm}
}
\xdef\csname orcid\x\endcsname{\noexpand\href{https://orcid.org/\csname orcidauthor\x\endcsname}{\noexpand\orcidicon}}
\providecommand{\corollaryname}{Corollary}
\providecommand{\definitionname}{Definition}
\providecommand{\examplename}{Example}
\providecommand{\factname}{Fact}
\providecommand{\lemmaname}{Lemma}
\providecommand{\propositionname}{Proposition}
\providecommand{\remarkname}{Remark}
\providecommand{\theoremname}{Theorem}
\begin{document}
\title[Spectral dimensions of Kre\u{\i}n--Feller operators]{Spectral dimensions of Kre\u{\i}n--Feller operators in higher dimensions}
\author{Marc Kesseböhmer \orcidA{}}
\email{mhk@uni-bremen.de}
\author{Aljoscha Niemann}
\email{niemann1@uni-bremen.de}
\begin{abstract}
We study the spectral dimensions of Kre\u{\i}n–Feller operators for
arbitrary finite Borel measures $\nu$ on the $d$-dimensional unit
cube ($d\geq2$) via a form approach. We make use of the spectral
partition function of $\nu$ as introduced in \cite{KN2023} and,
assuming that the lower $\infty$-dimension of $\nu$ exceeds $d-2$,
we identify the upper Neumann spectral dimension as the unique zero
of the spectral partition function, thus revealing the intrinsic connection
of these spectral and fractal-geometric quantities. We show that if
the lower $\infty$-dimension of $\nu$ is strictly less than $d-2$,
the form approach breaks down. Examples are given for the critical
case, that is the lower $\infty$-dimension of $\nu$ equals $d-2$,
such that for one case the form approach breaks down, another case,
where the operator is well defined but we have no discrete set of
eigenvalues, and for the third case, where the spectral dimension
exists. We provide additional regularity assumptions on the spectral
partition function, guaranteeing that the Neumann spectral dimension
exists and coincides with the Dirichlet spectral dimension. The significance
of our new approach is illustrated by several prominent examples previously
treated in the literature, namely absolutely continuous measures and
more generally Ahlfors–David regular measures, and examples not previously
treated in the literature, namely self-conformal measures with or
without overlaps, for which we show that both the Dirichlet and Neumann
spectral dimensions exist and how they can be obtained from the $L^{q}$-spectrum
of the measures. We demonstrate how our approach can be used to obtain
upper and lower asymptotic spectral bounds for the case of Ahlfors–David
regular measures. Moreover, we provide sharp bounds for the upper
Neumann spectral dimension in terms of the upper Minkowski dimension
of the support of $\nu$ and its lower $\infty$-dimension. Finally,
we give an example for which the spectral dimension does not exist.
\end{abstract}

\address{FB 3 – Mathematik und Informatik, University of Bremen, Bibliothekstr.
5, 28359 Bremen, Germany}
\keywords{Kre\u{\i}n–Feller operator; Laplace operator; spectral asymptotics;
$L^{q}$-spectrum, spectral partition function, Dirichlet forms, Minkowski
dimension, coarse multifractal formalism, Sobolev spaces, adaptive
approximation algorithm.}
\subjclass[2000]{primary: 35P20, 35J05; secondary: 28A80, 42B35, 45D05}
\thanks{%
\parbox[t]{55em}{%
This research was supported by the DFG grant Ke 1440/3-1.%
}}
\maketitle

\section{Introduction and statement of main results}

\subsection{Introduction and background\label{subsec:Introduction-and-background}}

In this article we extend our work on the spectral dimensions of the
Kre\u{\i}n–Feller operators with respect to compactly supported finite
Borel measures $\nu$ to higher dimensions. Kre\u{\i}n–Feller operators
for the one-dimensional case were introduced in \cite{MR0042045,Fe57,KK68}
and since the late 1950's have been studied in some detail by various
authors \cite{MR0107037,MR118891,MR146444,MR0278126,MR661628,Fu87,MR1328700,MR2135259,MR2828537,MR2892328,ArztDiss,A15b,MR3318648,MR3648085,MR3809018,Freiberg:aa,MR4048458,MR4176086,MR4158704};
more recently, in \cite{KN21,KN2022} the authors gave an almost complete
picture of the relationships between the spectral dimension and the
$L^{q}$-spectrum of $\nu$. For dimension $d\geq2$, however, the
situation is quite different; in general, it is not even possible
to define the Kre\u{\i}n–Feller operator for a given finite Borel
measure $\nu$, since in general there is no continuous embedding
of the Sobolev space of weakly differentiable functions into $L_{\nu}^{2}$
(for example, when $\nu$ has atoms).\textbf{ }For Dirichlet boundary
conditions, in \cite{MR2261337} a sufficient condition in terms of
the maximal asymptotic direction of the $L^{q}$-spectrum of $\nu$
has been established, as provided in \prettyref{eq:assumption infinity dim},
which ensures a compact embedding of the relevant Sobolev space into
$L_{\nu}^{2}$. We would like to note that Triebel already stated
this condition implicitly in 1997 in the fundamental book \cite{MR1484417}.
In 2003 (see \cite{MR1999566,MR2087139}) he also indicated that there
should be a subtle connection between the multifractal concept of
the $L^{q}$-spectrum and analytic properties of the associated `fractal'
operators, a conjecture that we can confirm with this work. The connection
of fractal properties with spectral properties of reasonable associated
operators is a long ongoing task and we refer the interested reader
to \cite{MR1021743,MR1026205,MR1126694,MR1243717,MR1473565}.

In this paper we extend ideas for the one-dimensional case developed
in \cite{KN21,KN2022} to higher dimensions $d\geq2$ and in this
way follow the line of investigation outlined in \cite{MR0217487,MR1338787,MR1839473,MR2261337,Ngai_2021}.
We will introduce the new notion of partition functions, which is
needed for higher dimensions and naturally generalises $L^{q}$-spectra
(\prettyref{sec:Partition-functions}). This new construction sheds
also light on certain optimal embedding constants for Sobolev spaces
(see \prettyref{subsec:Optimal-embedding-constants}) as elaborated
by Maz'ya and Preobrazenskii \cite{MR817985,MR743823} for $d=2$
and Adams \cite[Section 1.4.1]{MR2777530} for $d>2$.

In contrast to the one-dimensional case, the spectral dimension of
Kre\u{\i}n–Feller operators is so far known only for very few special
cases of singular measures. The spectral dimension of Kre\u{\i}n–Feller
operators for higher dimensions has first been computed by Naimark
and Solomyak \cite{MR1298682,MR1338787} for self-similar measures
under the open set condition (OSC), by Triebel \cite[Theorem 30.2]{MR1484417}
in particular in the setting of Ahlfors–David regular measures with
a lot of interesting refinements in the more recent work \cite{MR4331823,MR4484835},
and also by Ngai and Xie \cite{Ngai_2021} for a class of graph-directed
self-similar measures satisfying the graph open set condition. In
\cite[Sec.  5]{Ngai_2021} Ngai and Xie pointed out that it would
also be interesting to study self-similar measures defined by IFSs
with overlaps on $\R^{d}$ with $d\geq2$. Indeed, as an application
of our general results from \prettyref{subsec:Main-results}, we are
able to extend these achievements to self-conformal measures without
any restriction on the separation conditions. We prove that under
the assumption \prettyref{eq:assumption infinity dim} the spectral
dimension for self-conformal measures can be identified as the unique
intersection of the $L^{q}$-spectrum with the line of slope $2-d$
through the origin (\prettyref{thm:Self-Conform}). This work is based
on the dissertation \cite{elib_6573} by the second author.

\subsection{Preliminaries\label{subsec:Preliminaries}}

We now proceed to outline the theoretical preliminaries necessary
to determine the spectral properties of the Kre\u{\i}n–Feller operator
$\Delta_{\nu}^{D/N}$ for a given finite non-zero Borel measure $\nu$
on the fixed $d$-dimensional unit cube $\Q\coloneqq\prod_{i=1}^{d}I_{i}$,
$d\geq2$ with $I_{j}$ unit intervals, for $j=1,\ldots,d$, each
of which can be chosen to be either half-open, open, or closed (that
is we have $\nu\left(\Q\right)=\nu\left(\R^{d}\right)\in\left(0,+\infty\right)$).
In the following we fix a bounded Lipschitz domain $\Omega\subset\R^{d}$,
that is a bounded domain with Lipschitz boundary, for which we assume
without loss of generality for notational convenience that $\Omega$
lies in the open unit cube. Let us define the \emph{Sobolev spaces}
$H^{N}\left(\Omega\right)$ as the completion of $\mathcal{C}_{N}^{\infty}\left(\Omega\right)\coloneqq\mathcal{C}_{b}^{\infty}\Bigl(\overline{\Omega}\bigr)$
with respect to the metric $\left\Vert \,\cdot\,\right\Vert _{H^{N}\left(\Omega\right)}$
given by the inner product 
\[
\left\langle f,g\right\rangle _{H^{N}\left(\Omega\right)}\coloneqq\int_{\Omega}fg\d\Lambda+\int_{\Omega}\nabla f\nabla g\d\Lambda,
\]
and let $H^{D}\left(\Omega\right)$ be the respective completion of
$\mathcal{C}_{D}^{\infty}\left(\Omega\right)\coloneqq\mathcal{C}_{c}^{\infty}\left(\Omega\right)$.
Here, $\Lambda$ denotes the $d$-dimensional Lebesgue measure, $\mathcal{C}_{c}^{\infty}\left(\Omega\right)$
the vector space of smooth functions with compact support contained
in $\Omega$, and $\mathcal{C}_{b}^{\infty}\left(\overline{\Omega}\right)$
the vector space of functions $f:\overline{\Omega}\rightarrow\R$
such that $f|_{\Omega}\in\mathcal{C}^{m}\left(\Omega\right)$ for
all $m\in\N$ with $D^{\alpha}f|_{\Omega}$ uniformly continuous on
$\Omega$ for all $\alpha\coloneqq\left(\alpha_{1},\ldots,\alpha_{d}\right)\in\N_{0}^{d}$
(and therefore allowing a unique continuous continuation to $\overline{\Omega}$).
For all $u\in H^{D}(\Omega)$, or resp. $u\in\left\{ f\in H^{N}(\Omega):\int_{\Omega}f\d\Lambda=0\right\} $,
the \emph{Poincaré inequality}, resp. \emph{Poincaré–Wirtinger inequality}
(see \cite[Lemma 3, p. 500]{RuizDavid} and \prettyref{lem:equivalenzNorm}),
reads, for some constant $c>0$, as follows
\begin{equation}
\left\Vert u\right\Vert _{L_{\Lambda}^{2}(\Omega)}\leq c\left\Vert \nabla u\right\Vert _{L_{\Lambda}^{2}(\Omega)}.\label{eq:PW_inequality}
\end{equation}
As a consequence this gives rise to an equivalent metric $\left\Vert \,\cdot\,\right\Vert _{H^{D}\left(\Omega\right)}$
on $H^{D}\left(\Omega\right)$ given by the inner product 
\[
\left\langle f,g\right\rangle _{H^{D}\left(\Omega\right)}\coloneqq\int_{\Omega}\nabla f\nabla g\d\Lambda.
\]
 The space $H^{N}(\Omega)$ with the bilinear form $\left\langle \cdot,\cdot\right\rangle _{H^{N}(\Omega)}$
defines a Hilbert space and $H^{D}(\Omega)$ a closed subspace. Let
us write $\Omega^{N}\coloneqq\overline{\Omega}$ and $\Omega^{D}\coloneqq\Omega$.
On the one hand, for a finite Borel measure $\nu$ with $\supp\nu\subset\Omega^{N}$,
we will see in \prettyref{prop:non-contin Embedding} that the natural
embedding
\[
\iota:\mathcal{C}_{D/N}^{\infty}\left(\Omega\right)\to L_{\nu}^{2}\left(\Omega^{D/N}\right)
\]
is not continuous if the \emph{lower $\infty$-dimension of} $\nu$,
\begin{equation}
\dim_{\infty}\left(\nu\right)\coloneqq\liminf_{r\searrow0}\frac{\sup_{x\in\Omega^{N}}\log\nu\left(B\left(x,r\right)\right)}{-\log r}\label{eq:LowerDimDefinition}
\end{equation}
lies under a certain threshold, namely, $\dim_{\infty}\left(\nu\right)<d-2.$
Here, $B\left(x,r\right)$ denotes the open euclidean ball with centre
$x$ and radius $r$. Obviously, we always have $\dim_{\infty}\left(\nu\right)\leq d$,
and the assumption $\dim_{\infty}\left(\nu\right)>0$ excludes the
possibility of $\nu$ having atoms.

On the other hand, if the measure $\nu$ fulfils the\emph{ Hu}–\emph{Lau–Ngai
condition} from \cite{MR2261337}, i.\,e\@.
\begin{equation}
\dim_{\infty}\left(\nu\right)>d-2,\label{eq:assumption infinity dim}
\end{equation}
then the $\nu$\emph{-Poincaré inequality} holds, that is for some
$c>0$ we have
\begin{equation}
\left\Vert u\right\Vert _{L_{\nu}^{2}\left(\Omega^{N}\right)}\leq c\left\Vert u\right\Vert _{H^{N}\left(\Omega\right)}\;\text{for all }u\in\mathcal{C}_{N}^{\infty}\left(\Omega\right).\label{eq:nu_PI}
\end{equation}
Since $\mathcal{C}_{D/N}^{\infty}\left(\Omega\right)$ lies dense
in $H^{D/N}\left(\Omega\right)$ the inequality gives rise to a continuous
mapping
\[
\iota\coloneqq\iota_{\nu}^{D/N}:H^{D/N}\left(\Omega\right)\to L_{\nu}^{2}\left(\Omega^{D/N}\right).
\]
If $\iota$ is also injective, then we may regard $H^{D/N}\left(\Omega\right)$
as a subspace of $L_{\nu}^{2}\left(\Omega^{D/N}\right)$. In case
the map is not injective we consider the following closed subspace
of $H^{D/N}(\Omega)$
\[
\mathfrak{N}_{\nu}^{D/N}\coloneqq\ker\left(\iota\right)=\left\{ f\in H^{D/N}\left(\Omega\right):\left\Vert \iota(f)\right\Vert _{L_{\nu}^{2}\left(\Omega^{D/N}\right)}=0\right\} 
\]
and obtain a natural embedding of its orthogonal complement in $H^{D/N}\left(\Omega\right)$,
\[
\left(\mathfrak{N}_{\nu}^{D/N}\right)^{\perp}\coloneqq\left\{ f\in H^{D/N}\left(\Omega\right):\forall g\in\mathfrak{N}_{\nu}^{D/N}:\left\langle f,g\right\rangle _{H^{D/N}\left(\Omega\right)}=0\right\} \hookrightarrow L_{\nu}^{2}\left(\Omega^{D/N}\right),
\]
which is again denoted by $\iota$. Since $\iota$ maps $\left(\mathfrak{N}_{\nu}^{D/N}\right)^{\perp}$
bijectively to $\dom\left(\E^{D/N}\right)\coloneqq\iota\left(\left(\mathfrak{N}_{\nu}^{D/N}\right)^{\perp}\right)$,
we may define the relevant corresponding forms with \emph{Dirichlet
}and\emph{ Neumann boundary conditions} by the push forward of the
inner product in $H^{D/N}\left(\Omega\right)$, that is for $u,v\in\dom(\E^{D/N})$,
\[
\E^{D/N}(u,v)\coloneqq\left\langle \iota^{-1}u,\iota^{-1}v\right\rangle _{H^{D/N}(\Omega)}.
\]
Assuming \prettyref{eq:nu_PI}, we have that $\dom\left(\E^{D/N}\right)$
equipped with the inner product $\langle f,g\rangle_{\nu}+\E^{D/N}(f,g)$
defines a Hilbert spaces, i.\,e\@. $\E^{D/N}$ is a closed form
with respect to $L_{\nu}^{2}\left(\Omega^{D/N}\right)$. Hence, under
the assumption \prettyref{eq:nu_PI} for $\E^{D/N}$, e.\,g\@. by
\autocite[Theorem B.1.6]{kigami_2001}, there exists non-negative
self-adjoint operator $\Delta_{\nu}^{D/N}\coloneqq\Delta_{\Omega,\nu}^{D/N}$
on $L_{\nu}^{2}\left(\Omega^{D/N}\right)$ such that 
\begin{align*}
f & \in\dom\left(\Delta_{\nu}^{D/N}\right)\iff\left\{ \begin{array}{l}
f\in\dom\left(\E^{D/N}\right)\:\text{and }\\
\exists u\in L_{\nu}^{2}\left(\Omega^{D/N}\right):\forall g\in\dom\left(\E^{D/N}\right):\E^{D/N}(f,g)=\left\langle u,g\right\rangle _{L_{\nu}^{2}\left(\Omega^{D/N}\right)}.
\end{array}\right.
\end{align*}
In this case we have $\Delta_{\nu}^{D/N}f\coloneqq u$. Note that
$\dom\left(\Delta_{\nu}^{D/N}\right)\subset\dom\left(\left(\Delta_{\nu}^{D/N}\right)^{1/2}\right)=\dom\left(\E^{D/N}\right)$.
For a vector space $V$ we will use the shorthand notation $V^{\star}\coloneqq V\setminus\left\{ 0\right\} $.
We call $\Delta_{\nu}^{D/N}$ \emph{Kre\u{\i}n–Feller operator} and
$f\in\dom\left(\Delta_{\nu}^{D/N}\right)^{\star}$ a (Dirichlet/Neumann)
\emph{eigenfunction} with \emph{eigenvalue} $\lambda\in\R$ if 
\[
\E^{D/N}(f,g)=\lambda\left\langle f,g\right\rangle _{L_{\nu}^{2}\left(\Omega^{D/N}\right)}\,\text{for all }\,g\in\dom\left(\E^{D/N}\right).
\]
To deduce that the embedding $\left(\dom\left(\E^{D/N}\right),\E^{D/N}\right)\hookrightarrow L_{\nu}^{2}\left(\Omega^{D/N}\right)$
is compact under the assumption $\dim_{\infty}(\nu)>d-2$, we need
the following result due to Maz'ya \cite[Theorem 3, p. 386]{MR817985}
and \cite[Theorem 4, p. 387]{MR817985}: Let $H^{1}\left(\R^{d}\right)$
denote the usual Sobolev space for $\R^{d}$ with corresponding norm
$\left\Vert \,\cdot\,\right\Vert _{H^{1}\left(\R^{d}\right)}$. Then,
for $a,b\in\R$, setting 
\begin{equation}
\zeta_{\nu,a,b}^{r}\coloneqq\begin{cases}
\sup\left\{ \left|\log(\rho)\right|\nu\left(B(x,\rho)\right)^{b}:x\in\R^{d},\rho\in(0,r)\right\} , & \text{for }a=0,\\
\sup\left\{ \rho^{a}\nu\left(B(x,\rho)\right)^{b}:x\in\R^{d},\rho\in(0,r)\right\} , & \text{for }a\neq0,
\end{cases}\label{eq:zeta}
\end{equation}
the set $\left\{ u\in\mathcal{C}_{c}^{\infty}(\R^{d}):\left\Vert u\right\Vert _{H^{1}\left(\R^{d}\right)}\leq1\right\} $
is precompact in $L_{\nu}^{t}$ if and only if $\lim_{r\downarrow0}\zeta_{\nu,2-d,2/t}^{r}=0$.
That this condition is guarantied by our assumption \prettyref{eq:assumption infinity dim}
is demonstrated for the Dirichlet case in \cite{MR2261337} and follows
along the same lines also for the Neumann case by appropriately using
the continuity of the extension operator (see \prettyref{subsec:Stein-extensions}
and \cite{elib_6573} for details).

If the embedding $\iota$ is compact, then $\Delta_{\nu}^{D/N}$ admits
a countable set of eigenfunctions spanning $L_{\nu}^{2}\left(\Omega^{D/N}\right)$
with a non-negative and non-decreasing sequence of eigenvalues $\left(\lambda_{n,\nu}^{D/N}\right)_{n\in\N}$
tending to infinity corresponding to the orthonormal systems of eigenfunctions
$\left(\phi_{n,\nu}^{D/N}\right)_{n\in\N}$.

Since we mainly concentrate on the case where $\Omega$ is equal to
the interior $\mathring{\Q}=\left(0,1\right)^{d}$ of the unit cube
$\Q$, we write in this case $H^{D/N}\coloneqq H^{D/N}(\mathring{\Q})$,
and $L_{\Lambda}^{2}\coloneqq L_{\Lambda}^{2}\left(\Q\right)$.

As mentioned above, the Hu–Lau–Ngai condition already appeared implicitly
in \cite[Theorem 30.2  (Isotropic fractal drum)]{MR1484417} in the
context of Ahlfors–David regular measures, for which we provide more
details in \prettyref{subsec:Applications} below) and for higher
order operators an appropriately adapted version also appears in the
recent work \cite{MR4331823,MR4484835}.

For the $\infty$-dimension of $\nu$ with $\supp\left(\nu\right)\subset\overline{\Q}$
we alternatively have (see e.\,g\@. \cite{MR1237052}) $\dim_{\infty}\left(\nu\right)=\liminf_{n\to\infty}\max_{Q\in\mathcal{D}_{n}^{N}}\log\nu\left(Q\right)/\log\left(2^{-n}\right),$
where $\mathcal{D}_{n}^{N}$ denotes a partition of $\Q$ by cubes
of the form $Q\coloneqq\prod_{i=1}^{d}I_{i}$ with (half-open, open,
or closed) intervals $I_{i}$ with endpoints in the dyadic grid of
size $2^{-n}$, i.\,e\@. $\left(k-1\right)2^{-n}$, $k2^{-n}$ for
some $k\in\Z$. Note that by our assumption on the intervals $I_{i}$
which are individually chosen for each $Q$, these cubes are not necessarily
congruent, in that we allow that certain faces of $Q$ do not belong
to $Q$. However, we require that for each $n\in\N$ the partition
$\mathcal{D}_{n+1}^{N}$ is a refinement of $\mathcal{D}_{n}^{N}$,
this means that each element of $\mathcal{D}_{n}^{N}$ can be decomposed
into $2^{d}$ disjoint elements of $\mathcal{D}_{n+1}^{N}$. In this
way, $\mathcal{D}\coloneqq\bigcup_{n\in\N}\mathcal{D}_{n}^{N}$ defines
a semiring of sets, and for $Q\in\mathcal{D}$ we set $\mathcal{D}\left(Q\right)\coloneqq\left\{ \widetilde{Q}\in\mathcal{D}:\widetilde{Q}\subset Q\right\} $.
We note that for $Q\in\mathcal{D}$ with $\nu\left(Q\right)>0$ we
have $\dim_{\infty}(\nu)\leq\dim_{\infty}\left(\nu|_{Q}\right)$ and
hence the condition \prettyref{eq:assumption infinity dim} carries
over to the restricted Borel measure $\nu|_{Q}:B\mapsto\nu\left(B\cap Q\right)$.

We define the upper and lower exponent of divergence of the eigenvalue
counting function $N^{D/N}(x)\coloneqq\sup\left\{ n\in\N:\lambda_{n,\nu}^{D/N}\leq x\right\} $
by
\[
\underline{s}^{D/N}\coloneqq\liminf_{x\rightarrow\infty}\frac{\log\left(N^{D/N}(x)\right)}{\log(x)}\quad\text{ and }\quad\overline{s}^{D/N}\coloneqq\limsup_{x\to\infty}\frac{\log\left(N^{D/N}(x)\right)}{\log(x)},
\]
and refer to these numbers as the \emph{lower} and \emph{upper} \emph{spectral
dimension} of $\E^{D/N}$ (or of $\Delta_{\nu}^{D/N}$ or just of
$\nu$, resp.). If the two values coincide we denote the common value
by $s^{D/N}$, and call it the \emph{Dirichlet (respect. Neumann)
spectral dimension}. In general, there exists a constant $C$ such
that for all $k\in\N$ we have $\lambda_{k,\nu}^{N}\leq C\lambda_{k,\nu}^{D}$
(see \prettyref{prop:Neumann>Dirichlet}). This shows that we always
have 
\[
\underline{s}^{D}\leq\underline{s}^{N}\quad\text{and }\quad\overline{s}^{D}\leq\overline{s}^{N}.
\]
We will provide an example showing that the upper and lower spectral
dimension in general do not coincide. We would also like to point
out that the existence of the spectral dimension already in dimension
one does not necessarily impose spectral power law asymptotics (see
\cite{KN2022}). The spectral dimension also provides some essential
information on the domains of the associated Dirichlet form and the
Kre\u{\i}n–Feller operator, namely via the spectral representation
given by
\begin{itemize}
\item []$\dom\left(\E^{D/N}\right)=\left\{ \sum_{n\in\N}a_{n}\phi_{n,\nu}^{D/N}:\sum_{n\in\N}a_{n}^{2}\lambda_{n,\nu}^{D/N}<\infty\right\} ,$
\item []$\dom\left(\Delta_{\nu}^{D/N}\right)=\left\{ \sum_{n\in\N}a_{n}\phi_{n,\nu}^{D/N}:\sum_{n\in\N}a_{n}^{2}\left(\lambda_{n,\nu}^{D/N}\right)^{2}<\infty\right\} .$
\end{itemize}
Next, let us turn to the concept of partition functions, which in
a certain extent is borrowed from the thermodynamic formalism. Following
\cite{KN2023}, for an arbitrary monotone set function $\J:\mathcal{D}\to\R_{\geq0}$
we define the \emph{$\J$-partition function, }for $q\in\R_{\geq0}$,
\begin{align}
\GL_{\J}^{D/N}\left(q\right) & \coloneqq\limsup_{n\rightarrow\infty}\GL_{\J,n}^{D/N}\left(q\right)\quad\text{with\quad\ }\GL_{\J,n}^{D/N}\left(q\right)\coloneqq\frac{1}{\log2^{n}}\log\sum_{Q\in\mathcal{D}_{n}^{D/N}}\J\left(Q\right)^{q}\label{eq:DefGL}
\end{align}
with $\mathcal{D}_{n}^{D}\coloneqq\left\{ Q\in\mathcal{D}_{n}^{N}:\partial\Q\cap\overline{Q}=\varnothing\right\} $.
The reason why the definition of $\mathcal{D}_{n}^{D}$ is appropriate
becomes apparent in constructing certain functions with compact support
contained in $\mathring{\Q}$ for the proof of the lower bounds in
the Dirichlet case (see proof of \prettyref{lem:GeneralPrincipleLowerBound}).
Note that we use the convention $0^{0}=0$, that is for $q=0$ we
neglect the summands with $\J\left(Q\right)=0$ in the definition
of $\GL_{\J,n}^{D/N}$. We consider the critical exponent 
\[
\kappa_{\J}\coloneqq\inf\left\{ q\geq0:\sum_{Q\in\mathcal{D}}\J\left(Q\right)^{q}<\infty\right\} .
\]
 An important special case is the $L^{q}$ spectrum $\beta_{\mathfrak{\nu}}^{D/N}\coloneqq\GL_{\nu}^{D/N}$
of $\nu$, which is the relevant quantity in the one-dimensional case
and also in certain higher-dimensional cases. While in the higher
dimensional, we will particularly be interested in the set function
\[
\J_{\nu,a,b}\left(Q\right)\coloneqq\begin{cases}
\sup\left\{ \nu\left(\widetilde{Q}\right)^{b}\left|\log\left(\Lambda\left(\widetilde{Q}\right)\right)\right|:\widetilde{Q}\in\mathcal{D}\left(Q\right)\right\} , & a=0,\\
\sup\left\{ \nu\left(\widetilde{Q}\right)^{b}\left(\Lambda\left(\widetilde{Q}\right)\right)^{a}:\widetilde{Q}\in\mathcal{D}\left(Q\right)\right\} , & a\neq0,
\end{cases}
\]
with $b\geq0$ and $a\in\R$. For $t\geq2$, we write $\J_{\nu,t}\left(Q\right)\coloneqq\J_{\nu,2/d-1,2/t}\left(Q\right)$
and $\J_{\nu}\left(Q\right)\coloneqq\J_{\nu,2/d-1,1}\left(Q\right)$.
We note that the general parameter $a,b$ will also prove useful when
considering polyharmonic operators in higher dimensions or approximation
order with respect to \emph{Kolmogorov, Gel'fand,} or \emph{linear
widths} as elaborated in \cite{KN21b,KesseboehmerWiegmann}. In these
works, the deep connection to the original ideas of entropy numbers
introduced by Kolmogorov also becomes apparent. In \cite{KN22b} we
address the quantization problem, that is the speed of approximation
of a compactly supported Borel probability measure by finitely supported
measures (see \cite{MR1764176} for an introduction), by adapting
the methods from \cite{KN2023} presented in \prettyref{sec:OptimalPartitions}
to $\J_{\nu,a,1}$ with $a\in\R$ and identify the upper \emph{quantization
dimension} of $\nu$ with its \emph{Rényi dimension.}

Our most powerful auxiliary object is the (\textbf{D}irichlet/\textbf{N}eumann)\emph{
spectral partition function} with respect to $\nu$ given by the special
choice $\J=\J_{\nu,a,b}$. As a consequence of \prettyref{lem: closed-cubes vs cubes-1}
we know that the spectral partition function does not depend on the
specific choice of the collection of dyadic cubes $\mathcal{D}_{n}^{D/N}$.
First, to obtain upper estimates of the spectral dimension, we construct
optimal partitions using an adaptive approximation algorithm as worded
out in \cite{KN2023} and presented in \prettyref{sec:OptimalPartitions}.
Let us define the set of $\J$-\emph{partitions} $\Pi_{\J}$ to be
the set of finite collections of dyadic cubes such that for all $P\in\Pi_{\J}$
there exists a partition $\tilde{P}$ of $\Q$ by dyadic cubes from
$\mathcal{D}$ with $P=\left\{ Q\in\tilde{P}:\J\left(Q\right)>0\right\} $.
We define
\[
M_{\J}\left(x\right)\coloneqq\inf\left\{ \card\left(P\right):P\in\Pi_{\J},\max_{Q\in P}\J\left(Q\right)<1/x\right\} .
\]
and
\[
\overline{h}_{\J}\coloneqq\limsup_{x\to\infty}\frac{\log M_{\J}\left(x\right)}{\log x},\quad\underline{h}_{\J}\coloneqq\liminf_{x\to\infty}\frac{\log M_{\J}\left(x\right)}{\log x}
\]
will be called the\emph{ upper, }resp.\emph{ lower, $\J$-partition
entropy.} In \prettyref{sec:OptimalPartitions} we will recall results
from \cite{KN2023} to establish a connection between $\overline{h}_{\J}$,
$\kappa_{\J}$ and $q_{\J}^{D/N}\coloneqq q^{D/N}\coloneqq\inf\left\{ q\geq0:\GL_{\J}^{D/N}\left(q\right)<0\right\} $.

\subsection{Main results\label{subsec:Main-results}}

The following abstract theorem provides an upper bound on the upper
and lower spectral dimension in terms of the upper $\J$-partition
entropy. It will turn out in the proof of our main theorem that this
abstract result will be applicable for any finite Borel measure $\nu$
satisfying \prettyref{eq:assumption infinity dim} for the particular
choice $\J=\J_{\nu}$ as defined above. In the following we need the
notion of\emph{ uniform vanishing} for a set function $\J$ on $\mathcal{D}$,
which says $\lim_{n\rightarrow\infty}\max_{Q\in\mathcal{D}_{n}^{N}}\J\left(Q\right)=0$.
\begin{thm}
\label{thm:MainUpperBound_General} Suppose there exists a non-negative,
monotone and uniformly vanishing set function $\J$ on $\mathcal{D}$,
such that for all $Q\in\mathcal{D}$ and all $u\in\mathcal{C}_{b}^{\infty}\left(\overline{Q}\right)$
with $\int_{Q}u\d\Lambda=0$, we have 
\[
\left\Vert u\right\Vert _{L_{\nu}^{2}\left(Q\right)}^{2}\leq\J\left(Q\right)\left\Vert \nabla u\right\Vert _{L_{\Lambda}^{2}\left(Q\right)}^{2}.
\]
Then $N^{N}\leq M_{\J}$ and in particular, $\overline{s}^{N}\leq\overline{h}_{\J}$
and $\underline{s}^{N}\leq\underline{h}_{\J}.$
\end{thm}

For lower estimates of the spectral dimension we use certain disjoint
families of dyadic cubes and borrow ideas from the \emph{coarse multifractal
analysis} (see \cite{MR3236784,MR1312056}) which will be the topic
of \prettyref{sec:OptimalPartitions}, which summarises results from
\cite{KN2023}. In there we will also see how the dyadic partition
approach and the optimal partition approach are related by ideas from
large deviation theory. For all $n\in\N$ and $\alpha>0$, we define
\[
\mathcal{N}_{\alpha,\J}^{D/N}\left(n\right)\coloneqq\card B_{\alpha,\J}^{D/N}\left(n\right),\quad B_{\alpha,\J}^{D/N}\left(n\right)\coloneqq\left\{ Q\in\mathcal{D}_{n}^{D/N}:\J\left(Q\right)\geq2^{-\alpha n}\right\} ,
\]
and set 
\[
\overline{F}_{\J}^{D/N}\left(\alpha\right)\coloneqq\limsup_{n}\frac{\log^{+}\left(\mathcal{N}_{\alpha,\J}^{D/N}\left(n\right)\right)}{\log\left(2^{n}\right)}\;\text{and }\;\underline{F}_{\J}^{D/N}\left(\alpha\right)\coloneqq\liminf_{n}\frac{\log^{+}\left(\mathcal{N}_{\alpha,\J}^{D/N}\left(n\right)\right)}{\log\left(2^{n}\right)},
\]
with $\log^{+}(x)\coloneqq\max\left\{ 0,\log(x)\right\} $, $x\geq0$.
We refer to the quantities
\[
\overline{F}_{\J}^{D/N}\coloneqq\sup_{\alpha>0}\frac{\overline{F}_{\J}^{D/N}\left(\alpha\right)}{\alpha}\quad\text{and }\quad\underline{F}_{\J}^{D/N}\coloneqq\sup_{\alpha>0}\frac{\underline{F}_{\J}^{D/N}\left(\alpha\right)}{\alpha}
\]
as the \emph{upper}, resp. \emph{lower, optimised (Dirichlet/Neumann)
coarse multifractal dimension} with respect to $\J$. The lower estimate
of the spectral dimension is based on the following abstract observation
which connects the optimised coarse multifractal dimension and the
spectral dimension.
\begin{thm}
\label{thm:IntroGeneralPrincipleLowerBound}Assume there exists a
non-negative, monotone set function $\J$ on $\mathcal{D}$ with $\dim_{\infty}\left(\J\right)>0$
such that for every $Q\in\mathcal{D}^{D/N}$ with $\J\left(Q\right)>0$
there exists a non-negative and non-zero function $\psi_{Q}\in\mathcal{C}_{c}^{\infty}$
with support contained in $\langle\mathring{Q}\rangle_{3}$ (the definition
of $\langle\mathring{Q}\rangle_{3}$ is stated just above \prettyref{lem: MOillifierFunction})
such that
\[
\left\Vert \psi_{Q}\right\Vert _{L_{\nu}^{2}}^{2}\geq\J\left(Q\right)\left\Vert \nabla\psi_{Q}\right\Vert _{L_{\Lambda}^{2}\left(\mathbb{R}^{d}\right)}^{2}.
\]
Then we have $\overline{F}_{\J}^{D/N}\leq\overline{s}^{D/N}$ and
$\underline{F}_{\J}^{D/N}\leq\underline{s}^{D/N}$.
\end{thm}

We will see that in our setting, using the general results of \cite{KN2023},
the upper and lower bounds are related to the partition entropy. Indeed,
from \prettyref{eq:GeneralResultOnPartition_Entropy} we infer 
\[
\underline{F}_{\J}^{N}\leq\underline{h}_{\J}\leq\overline{h}_{\J}=q_{\J}^{N}=\overline{F}_{\J}^{N}.
\]
 Also note, if $\J$ is\emph{ }uniformly vanishing and $0<q_{\J}^{D/N}<\infty$,
then $q_{\J}^{D/N}$ is the unique zero of $\GL_{\J}^{D/N}$ and $q_{\J}^{N}=\kappa_{\J}$;
in general, we have $\kappa_{\J}\leq q_{\J}^{N}$. Under the condition
\prettyref{eq:assumption infinity dim} and for any $t\in\left(2,2\dim_{\infty}(\nu)/(d-2)\right)$
the set function $\J_{\nu,t}$ is uniformly vanishing and using \cite[Corollary, p. 54]{MR817985},
\prettyref{thm:MainUpperBound_General} is applicable for $\J_{\nu,t}$
(see \prettyref{cor: Estimate for H1 elements} and \prettyref{prop:spectralDimParametert}).
For the critical case $\dim_{\infty}\left(\nu\right)=d-2$ there is
the possibility of no continuous embedding, a continuous but non-compact
or a compact embedding. In \prettyref{sec:The-critical-case} we give
examples (for $d=3$) of absolutely continuous measures with $\dim_{\infty}\left(\nu\right)=d-2=1$
such that each possibility is realised. For the case of compact embedding,
\prettyref{thm:MainUpperBound_General} can be employed to show that
in our example $s^{N}=3/2$ (\prettyref{exa: Critical example_ compact}).

We will see that \prettyref{thm:IntroGeneralPrincipleLowerBound}
is applicable for $\J=\nu$ in the case $d=2$ and $\J=\J_{\nu}$
for $d>2$. The following list of results give the main achievements
of this paper. The proofs are postponed to \prettyref{sec:MainProofs}.
As an auxiliary quantity we need
\[
\dim_{\infty}^{N\setminus D}\left(\nu\right)\coloneqq\liminf_{n\to\infty}-\log\left(\max_{Q\in\mathcal{D}_{n}^{N}\setminus\mathcal{D}_{n}^{D}}\nu\left(Q\right)\right)/\log\left(2^{n}\right)
\]
and we introduce the shorthand notation $q^{D/N}\coloneqq q_{\J_{\nu}}^{D/N}$,
$\overline{F}^{D/N}\coloneqq\overline{F}_{\J_{\nu}}^{D/N}$, $\underline{F}^{D/N}\coloneqq\underline{F}_{\J_{\nu}}^{D/N}$
, $\GL^{D/N}\coloneqq\GL_{\J_{\nu}}^{D/N}$, $\overline{h}\coloneqq\overline{h}_{\J_{\nu}}$
and $\underline{h}\coloneqq\lim_{t\downarrow2}\underline{h}_{\J_{\nu,(2/d-1),2/t}}$.
In the following we write $\overline{\dim}_{M}\left(A\right)$ for
the upper Minkowski dimension of the bounded set $A\subset\R^{d}$
and—slightly abusing notation—we also write $\overline{\dim}_{M}\left(\nu\right)\coloneqq\overline{\dim}_{M}\left(\supp\left(\nu\right)\right)$
for the compactly support Borel measure $\nu$.
\begin{thm}
\label{thm:MainChain_of_Inequalities+Regularity} Let $\nu$ be a
finite Borel measure on $\Q$ such that $\dim_{\infty}(\nu)>d-2$.
\begin{enumerate}
\item Under \textbf{Neumann} boundary conditions we have 
\begin{equation}
\underline{F}^{N}\leq\underline{s}^{N}\leq\underline{h}\,\leq\,\overline{h}=\overline{s}^{N}=q^{N}=\overline{F}^{N}.\label{eq:MainInequalities}
\end{equation}
\item Under \textbf{Dirichlet} boundary conditions and $\nu(\mathring{\Q})>0$
we have 
\[
\underline{F}^{D}\leq\underline{s}^{D}\:\text{ and }\,\:\overline{F}^{D}=q^{D}\leq\overline{s}^{D}\leq q^{N}.
\]
\item If $\GL^{N}\left(q^{D}\right)=0$, or equivalently $\overline{F}^{N}=\overline{F}^{D}$,
then the upper Dirichlet and Neumann spectral dimensions have the
common value $\overline{s}^{D}=\overline{s}^{N}=q^{N}$. This assumption
is particularly fulfilled if
\begin{equation}
\frac{\overline{\dim}_{M}\left(\supp\left(\nu\right)\cap\partial\Q\right)}{\dim_{\infty}^{N\setminus D}\left(\nu\right)-d+2}<q^{N}.\label{eq:Condition4InMainThm}
\end{equation}
\end{enumerate}
\end{thm}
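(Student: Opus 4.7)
My plan is to derive each part of the theorem by reducing it to the abstract upper bound \prettyref{thm:MainUpperBound_General} and the abstract lower bound \prettyref{thm:IntroGeneralPrincipleLowerBound} applied to judicious choices of the set function $\J$, combined with the structural chain
\[
\underline{F}_{\J}^{N}\leq\underline{h}_{\J}\leq\overline{h}_{\J}=q_{\J}^{N}=\overline{F}_{\J}^{N},\qquad \overline{F}_{\J}^{D}=q_{\J}^{D},
\]
that the paper promises to establish in \prettyref{sec:OptimalPartitions} and \prettyref{sec:Coarse-multifractal-analysis}. The lower estimates are extracted by taking $\J=\J_{\nu}$, while the upper estimates use the parametric family $\J_{\nu,t}$ with $t\in(2,2\dim_{\infty}(\nu)/(d-2))$ together with a limit $t\downarrow2$.

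For part~(1), I would apply \prettyref{thm:IntroGeneralPrincipleLowerBound} with $\J=\J_{\nu}$. The required test function $\psi_{Q}\in\mathcal{C}_{c}^{\infty}(\mathring{\langle Q\rangle_{3}})$ satisfying $\|\psi_{Q}\|_{L_{\nu}^{2}}^{2}\geq\J_{\nu}(Q)\|\nabla\psi_{Q}\|_{L_{\Lambda}^{2}}^{2}$ is constructed, for each $Q$ with $\J_{\nu}(Q)>0$, as a standard dyadic cut-off supported on an enlargement of a subcube $\widetilde{Q}\in\D(Q)$ nearly attaining the supremum defining $\J_{\nu}(Q)$. This yields $\underline{F}^{N}\leq\underline{s}^{N}$ and $\overline{F}^{N}\leq\overline{s}^{N}$, and the equalities $\overline{F}^{N}=q^{N}=\overline{h}$ are immediate from the structural chain (whose hypothesis $\dim_{\infty}(\J_{\nu})>0$ is guaranteed by $\dim_{\infty}(\nu)>d-2$). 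For the upper bound $\overline{s}^{N}\leq\overline{h}$ I would invoke \prettyref{thm:MainUpperBound_General} with $\J=\J_{\nu,t}$, using \prettyref{lem:Maz'ya's Constant} to supply the required Poincaré inequality and the uniform vanishing noted in \prettyref{subsec:Preliminaries}. Passing to the limit $t\downarrow2$, combined with the scaling identity $\J_{\nu,t(2/d-1)/2,1}=\J_{\nu,t}^{t/2}$ that is encoded in the definition of $\underline{h}$, simultaneously produces $\overline{s}^{N}\leq\overline{h}$ and $\underline{s}^{N}\leq\underline{h}$.

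Part~(2) proceeds in parallel: the bumps $\psi_{Q}$ from part~(1) already have support strictly inside $\mathring{\Q}$ whenever $Q\in\D_{n}^{D}$, and so qualify as Dirichlet test functions in \prettyref{thm:IntroGeneralPrincipleLowerBound}, giving $\underline{F}^{D}\leq\underline{s}^{D}$ and $\overline{F}^{D}\leq\overline{s}^{D}$; the hypothesis $\nu(\mathring{\Q})>0$ ensures that $q^{D}>0$ and that the Dirichlet partition sums are non-trivial, and the structural identity supplies $\overline{F}^{D}=q^{D}$. The sandwich $\overline{s}^{D}\leq\overline{s}^{N}\leq q^{N}$ follows from $H_{0}^{1}\subset H^{1}$ and the min–max principle, which forces $\lambda^{n}(\E^{N})\leq\lambda^{n}(\E^{D})$ and hence $N^{N}\geq N^{D}$. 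Part~(3) is obtained from parts~(1)--(2) together with the observation that, for $d=2$, $\sum_{Q\in\D_{n}^{N}}\nu(Q)=1$ gives $\beta_{\nu}^{N}(1)=0$, while strict positivity of $\beta_{\nu}^{N}$ on $[0,1)$ forces $q_{\nu}^{N}=1$; the $d=2$ Sobolev embedding $H^{1}(\Q)\hookrightarrow L^{p}(\Q)$ for every $p<\infty$ then propagates this to $q^{N}=\kappa_{\J_{\nu}}=1$, and parts~(1)--(2) lock in $\overline{s}^{N}=\overline{s}^{D}=1$.

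For part~(4), if $\GL^{N}(q^{D})=0$ then the strict decrease of $\GL^{N}$ at its zero forces $q^{D}=q^{N}$, whence parts~(1)--(2) give $\overline{s}^{D}=\overline{s}^{N}=q^{N}$; the reformulation $\overline{F}^{N}=\overline{F}^{D}$ is immediate from $\overline{F}^{D/N}=q^{D/N}$. The sufficient condition \eqref{eq:Condition4InMainThm} is established by controlling the boundary contribution $\sum_{Q\in\D_{n}^{N}\setminus\D_{n}^{D}}\J_{\nu}(Q)^{q}$: the number of such $Q$ meeting $\supp(\nu)$ grows at most like $2^{n\overline{\dim}_{M}(\supp(\nu)\cap\partial\Q)+o(n)}$, while each satisfies $\J_{\nu}(Q)\leq 2^{-n(\dim_{\infty}^{N\setminus D}(\nu)-d+2)+o(n)}$, so the boundary sum decays exponentially for every $q$ strictly above the ratio in \eqref{eq:Condition4InMainThm}, forcing $\GL^{D}(q^{N})=\GL^{N}(q^{N})=0$. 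The principal obstacle will be the $t\downarrow2$ passage in part~(1) — verifying that the Maz'ya-type constants $\J_{\nu,t}$ and their partition entropies genuinely track $\J_{\nu}$ and do not blow up — together with the matched construction of $\psi_{Q}$ realising the defining supremum within a universal factor on the enlarged cube $\mathring{\langle Q\rangle_{3}}$; once these technical inputs are in place, all four items fall out from the abstract bounds and the structural identities.
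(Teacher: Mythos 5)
Your proposal follows essentially the same route as the paper: lower bounds via mollified bump functions on near-optimal subcubes fed into \prettyref{thm:IntroGeneralPrincipleLowerBound}, upper bounds via Maz'ya embedding constants for $\J_{\nu,t}$ with $t\downarrow2$ fed into \prettyref{thm:MainUpperBound_General}, the structural identities $\overline{F}_{\J}^{N}=\overline{h}_{\J}=q_{\J}^{N}$ and $\overline{F}_{\J}^{D}=q_{\J}^{D}$, and for part~(4) exactly the paper's decomposition of the partition sum into interior and boundary cubes. One caveat: for $d=2$ the hypothesis of \prettyref{thm:IntroGeneralPrincipleLowerBound} fails for $\J=\J_{\nu}$ itself, since $\J_{\nu}(Q)$ then carries the factor $\left|\log\Lambda(\widetilde{Q})\right|$ which the bump construction cannot reproduce; the paper circumvents this by passing to the modified set function $\underline{\J}_{\nu}$ and showing its coarse multifractal dimensions agree with those of $\J_{\nu}$ (\prettyref{lem:SpecialJ_nu}), a small repair your plan would need to absorb.
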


\begin{rem}
We will see in \prettyref{cor:upper_spectralDimGeneralUpper/lowerBound}
that $q^{N}\geq\overline{\dim}_{M}\left(\nu\right)/\left(\overline{\dim}_{M}\left(\nu\right)-d+2\right)$.
Hence, we can replaces $q^{N}$ by $\overline{\dim}_{M}\left(\nu\right)/\left(\overline{\dim}_{M}\left(\nu\right)-d+2\right)$
on the right hand side in \prettyref{eq:Condition4InMainThm} making
this condition a bit weaker but independent of $q^{N}$. Moreover,
\prettyref{eq:Condition4InMainThm} can easily be verified for particular
measures $\nu$ such that
\begin{enumerate}
\item ${\displaystyle \overline{\dim}_{M}\left(\supp\left(\nu\right)\cap\partial\Q\right)<\overline{\dim}_{M}\left(\nu\right)\frac{\dim_{\text{\ensuremath{\infty}}}\left(\nu\right)-d+2}{\overline{\dim}_{M}\left(\nu\right)-d+2},}$
\item $\dim_{\infty}(\nu)>d-1$ and $\overline{\dim}_{M}\left(\supp\left(\nu\right)\cap\partial\Q\right)\leq\overline{\dim}_{M}\left(\nu\right)/2$,
\item $\overline{\dim}_{M}\left(\supp\left(\nu\right)\cap\partial\Q\right)=0$,
particularly for $\supp\left(\nu\right)\subset\mathring{\Q}$,
\item or $\nu$ is given by the $d$-dimensional Lebesgue measure $\Lambda|_{\Q}$
restricted to $\Q$ (then the left-hand side in \prettyref{eq:Condition4InMainThm}
is equal to $\left(d-1\right)/2$).
\end{enumerate}
Let us also remark that in \prettyref{subsec:Non-existence-of-the}
we present an example for which $\underline{s}^{N}<\overline{s}^{N}$
applies.
\end{rem}

\subsubsection{Regularity results}
\begin{defn}
We define two notions of regularity for $\nu$ assuming\emph{ $\dim_{\infty}\left(\nu\right)>d-2$}.
\begin{enumerate}
\item We call $\nu$ \emph{Dirichlet/Neumann multifractal-regular (D/N-MF-regular)}
if $\underline{F}^{D/N}=\overline{F}^{N}$.
\item We call $\nu$ \emph{Dirichlet/Neumann} \emph{partition function regular
(D/N-PF-regular}) if
\begin{itemize}
\item $\GL^{D/N}\left(q\right)=\liminf_{n}\GL_{\J_{\nu},n}^{D/N}\left(q\right)$
for $q\in\left(q^{D/N}-\varepsilon,q^{D/N}\right)$, for some $\varepsilon>0$,
or
\item $\GL^{D/N}\left(q^{D/N}\right)=\liminf_{n}\GL_{\J_{\nu},n}^{D/N}\left(q^{D/N}\right)$
and $\GL^{D/N}$ is differentiable at $q^{D/N}$.
\end{itemize}
\end{enumerate}
\end{defn}

\begin{rem}
The above theorem and the notion of regularity give rise to the following
list of observations for measures $\nu$ with $\dim_{\infty}\left(\nu\right)>d-2$:
\begin{enumerate}
\item An easy calculation shows that 
\[
\underline{F}^{N}\leq\underline{q}^{N}\coloneqq\inf\left\{ q>0:\liminf_{n}\GL_{\J_{\nu},n}^{N}\left(q\right)<0\right\} \leq q^{N}=\overline{F}^{N}
\]
From this it follows that N-MF-regular implies that $\GL^{N}$ exists
as a limit in $q^{N}$.
\item If the Neumann spectral dimension with respect to $\nu$ exists, then
it is given by purely measure-geometric data encoded in the $\nu$-partition
entropy, namely we have $\overline{h}=\underline{h}$ and this value
coincides with the spectral dimension.
\item N-MF-regularity implies equality everywhere in the chain of inequalities
\prettyref{eq:MainInequalities} and in particular the Neumann spectral
dimension exists. If $\nu$ is D-MF-regular, then we have equality
everywhere in all chains of inequalities above and in particular both
Neumann and Dirichlet spectral dimensions exist.
\item To the best of our knowledge, all measures examined in the literature,
for which the spectral dimension is known, are PF-regular.
\end{enumerate}
\end{rem}

The following theorem shows that the spectral partition function is
a valuable auxiliary concept to determine the spectral behaviour for
a given measure $\nu$.
\begin{thm}
\label{thm:LqRegularImpliesRegular}Under the assumption $\dim_{\infty}(\nu)>d-2$
we have the following regularity result:
\begin{enumerate}
\item If $\nu$ is N-PF-regular, then it is N-MF-regular and the Neumann
spectral dimension $s^{N}$ exists.
\item If $\nu$ is D-PF-regular and $\GL^{N}\left(q^{D}\right)=0$, then
both the Dirichlet und Neumann spectral dimension exist and coincide,
i.\,e\@. $s^{D}=s^{N}$.
\end{enumerate}
\end{thm}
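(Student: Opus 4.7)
The plan is to show that PF-regularity of $\J_\nu$ forces $\nu$ to be MF-regular, after which \prettyref{thm:MainChain_of_Inequalities+Regularity} immediately collapses all relevant chains of inequalities to equalities and yields existence of the spectral dimensions. For this I would introduce the ``lower critical exponent''
\[
\tilde{q}_\J^{D/N} \coloneqq \inf\{q \geq 0 : \liminf_n \GL_{\J,n}^{D/N}(q) < 0\},
\]
and, as a first step, prove the Legendre-type identification $\underline{F}_\J^{D/N} \geq \tilde{q}_\J^{D/N}$ by repeating the proof of $\overline{F}_\J^{D/N} = q_\J^{D/N}$ from \prettyref{sec:Coarse-multifractal-analysis} (namely \prettyref{cor:EqualityF_jandH_j}) with $\limsup_n$ replaced by $\liminf_n$: slice the partition sums $\sum_{Q \in \mathcal{D}_n^{D/N}} \J(Q)^q$ into dyadic level sets $\{\alpha n \leq -\log_2 \J(Q) < (\alpha+\varepsilon) n\}$, apply Chebyshev bounds, optimise over $\alpha$, and take $\liminf_n$ at the end.

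The heart of the argument would then be to establish $\tilde{q}_\J^{D/N} = q_\J^{D/N}$ under PF-regularity. Each $\GL_{\J,n}^{D/N}(\cdot)$ is convex in $q$ by Hölder's inequality, hence so is its limsup $\GL_\J^{D/N}$; moreover $\GL_\J^{D/N}(q_\J^{D/N}) = 0$ by continuity of convex functions on the interior of their domain. Under the first flavour of PF-regularity the identity $\tilde{q} = q$ is essentially tautological, since $\liminf_n \GL_n(q) = \GL(q) > 0$ for $q$ in a left neighbourhood of $q_\J^{D/N}$. Under the second flavour, the condition $\GL(q_\J^{D/N}) = \liminf_n \GL_n(q_\J^{D/N})$ together with $\GL(q_\J^{D/N}) = \limsup_n \GL_n(q_\J^{D/N}) = 0$ yields $\GL_n(q_\J^{D/N}) \to 0$. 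For $q < q_\J^{D/N} < q_0$ with $q_\J^{D/N} = (1-t)q + tq_0$, convexity of $\GL_n$ gives
\[
\GL_n^{D/N}(q) \geq \frac{\GL_n^{D/N}(q_\J^{D/N}) - t\, \GL_n^{D/N}(q_0)}{1-t};
\]
passing to $\liminf_n$ on the left and using $\limsup_n \GL_n(q_0) = \GL(q_0) < 0$ on the right forces $\liminf_n \GL_n(q) > 0$, i.~e\@. $\tilde{q}_\J^{D/N} \geq q$, and letting $q \uparrow q_\J^{D/N}$ gives the required equality.

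Combining these two steps proves both assertions. Under N-PF-regularity we obtain $\underline{F}^N \geq \tilde{q}^N = q^N = \overline{F}^N$, so $\nu$ is N-MF-regular and part (1) of \prettyref{thm:MainChain_of_Inequalities+Regularity} collapses the entire Neumann chain. For (2), the hypothesis $\GL^N(q^D) = 0$ together with convexity of $\GL^N$, which then vanishes at both $q^D$ and $q^N$, forces $q^D = q^N$; via part (4) of \prettyref{thm:MainChain_of_Inequalities+Regularity} we get $\overline{s}^D = \overline{s}^N = q^N$. D-PF-regularity feeds the same machinery to give $\underline{F}^D = q^D = q^N$, and then $\underline{F}^D \leq \underline{s}^D \leq \underline{s}^N \leq \overline{s}^N = q^N$ ensures both $s^D$ and $s^N$ exist and coincide.

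The main obstacle I anticipate is the dual identification $\underline{F}_\J^{D/N} \geq \tilde{q}_\J^{D/N}$: while the upper counterpart is the standard coarse multifractal analysis of \prettyref{sec:Coarse-multifractal-analysis}, passing from $\limsup_n$ to $\liminf_n$ requires matching the scale subsequence realising $\liminf_n \GL_n^{D/N}$ with a subsequence on which the cardinality $\mathcal{N}_{\alpha,\J}^{D/N}(n)$ is near its liminf at the optimal exponent $\alpha^\ast = -(\GL_\J^{D/N})'(q_\J^{D/N})$—this is where the differentiability hypothesis in the second flavour of PF-regularity becomes essential, effectively playing the role of the Gärtner--Ellis hypothesis in large deviation theory. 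The Dirichlet variant has the additional subtlety that boundary cubes excluded from $\mathcal{D}_n^D$ should contribute negligibly to the Neumann partition sum, which is exactly what $\GL^N(q^D) = 0$ encodes.
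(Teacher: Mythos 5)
Your overall architecture is the paper's: everything reduces to the single inequality $\underline{F}_{\J_{\nu}}^{D/N}\geq q_{\J_{\nu}}^{D/N}$ under PF-regularity, after which the chains in \prettyref{thm:MainChain_of_Inequalities+Regularity} together with $\lambda_{i}(\E^{N})\leq c\lambda_{i}(\E^{D})$ squeeze both spectral dimensions; your treatment of part (2) (uniqueness of the zero of $\GL^{N}$ forcing $q^{D}=q^{N}$, then part (4) of \prettyref{thm:MainChain_of_Inequalities+Regularity}) is also how the paper concludes. The gap is in your first step. The claim $\underline{F}_{\J}^{D/N}\geq\tilde{q}_{\J}^{D/N}$ as a regularity-free lemma cannot be obtained by ``slicing, Chebyshev, optimising over $\alpha$ and taking $\liminf_{n}$ at the end'', because of the order of quantifiers in $\underline{F}_{\J}^{D/N}=\sup_{\alpha}\liminf_{n}\log^{+}\mathcal{N}_{\alpha,\J}^{D/N}(n)/(\alpha\log2^{n})$: the slicing argument produces, for each $n$, \emph{some} level $\alpha_{n}$ at which $\mathcal{N}_{\alpha_{n},\J}^{D/N}(n)$ is large, but $\alpha_{n}$ may oscillate with $n$, and a pigeonhole over the finitely many levels only upgrades this to a bound on $\limsup_{n}$ at a fixed $\alpha$ --- i.e.\ exactly the statement $\overline{F}_{\J}^{D/N}=q_{\J}^{D/N}$ you already have from \prettyref{prop:LowerBoundUpperSpecDim}. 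So the intermediate lemma is unproven (and should be expected to fail without a regularity hypothesis: MF-regularity is precisely not automatic), while in your decomposition PF-regularity is consumed only by the comparatively easy identity $\tilde{q}_{\J}^{D/N}=q_{\J}^{D/N}$, which is not where it is needed.

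The fix, which is what the paper does in \prettyref{prop:GeneralBound.-1} and \prettyref{prop: Neumann/Dirichlet =00005CGL reg. implies lower bound}, is a Gärtner--Ellis-type concentration argument rather than a slicing argument: tilt the counting measure on $\mathcal{D}_{n}^{D/N}$ by $\J(Q)^{q}2^{-n\GL_{\J,n}^{D/N}(q)}$ and use \prettyref{lem:exponential decay-1} to show that, once $\GL_{\J,n}^{D/N}(q)$ converges along the full sequence, the tilted mass concentrates for \emph{all} large $n$ on cubes with $-\log_{2}\J(Q)/n$ near $-\partial\GL_{\J}^{D/N}(q)=[a,b]$; this yields $\mathcal{N}_{\alpha,\J}^{D/N}(n)\geq(1-o(1))2^{n(sq+\GL_{\J,n}^{D/N}(q))}$ for every fixed $\alpha>b$ and $s<a$, hence a genuine $\liminf_{n}$ bound at a single $\alpha$. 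One then applies this at points $q<q_{\J}^{D/N}$ where $\GL_{\J}^{D/N}$ is differentiable (such points are dense by convexity), for which $a=b$, and lets $q\uparrow q_{\J}^{D/N}$. Both flavours of PF-regularity are consumed at this stage --- the first guarantees convergence along the full sequence on a left neighbourhood of $q_{\J}^{D/N}$, the second allows applying the argument directly at $q_{\J}^{D/N}$ --- so your closing remark correctly locates the obstacle, but the proof as proposed does not overcome it.
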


This result is optimal in the sense that there is an example (derived
from an similar example for $d=1$ in \autocite{KN21}) of a measure
$\nu$ which is not $\GL$-regular and for which $\overline{s}^{N}>\underline{s}^{N}$.
It should be noted that PF-regularity is easily accessible if the
spectral partition function is essentially given by the $L^{q}$-spectrum.
\begin{cor}
For $d=2$, $\dim_{\infty}(\nu)>0$ and $\beta_{\nu}^{N}$ is differentiable
in $1$, then $s^{N}=1$. Additionally, if $\nu(\mathring{\Q})>0$,
then also $\beta_{\nu}^{D}$ is differentiable in $1$ and in particular,
$s^{D}=s^{N}=1$.
\end{cor}

\subsubsection{General bounds in terms of fractal dimensions}

In the following proposition we present lower bounds of the lower
spectral dimension in terms of the subdifferential , defined as 
\[
\partial\GL^{D/N}\left(q\right)\coloneqq\left\{ a\in\R:\forall t\in\R\;\:\GL^{D/N}\left(t\right)\geq a\left(t-q\right)+\GL^{D/N}\left(q\right)\right\} .
\]

\begin{prop}
\label{prop:loverbound_by_diff_in_1} Let us assume $\dim_{\infty}(\nu)>d-2$.
If for $q\in\left[0,q^{D/N}\right]$, we have $\GL^{D/N}\left(q\right)=\liminf_{n}\GL_{\J_{\nu},n}^{D/N}\left(q\right)$
and $-\partial\GL^{D/N}\left(q\right)=[a,b]$, then
\[
\frac{aq+\GL^{D/N}\left(q\right)}{b}\leq\underline{s}^{D/N}.
\]
\end{prop}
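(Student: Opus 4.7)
The plan is to combine two ingredients. First, \prettyref{thm:IntroGeneralPrincipleLowerBound} applied with $\J=\J_\nu$ (its hypotheses being verified in the proof of \prettyref{thm:MainChain_of_Inequalities+Regularity} under $\dim_{\infty}(\nu)>d-2$) gives
\[
\underline{s}^{D/N}\;\ge\;\underline{F}^{D/N}\;=\;\sup_{\alpha>0}\frac{\underline{F}^{D/N}(\alpha)}{\alpha},
\]
so it suffices to bound this supremum from below by $(aq+\GL^{D/N}(q))/b$.

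To this end I would establish a Legendre-duality lower bound on $\underline{F}^{D/N}(\alpha)$ for $\alpha$ slightly above $b$. Splitting the partition sum $\sum_{Q}\J_\nu(Q)^q$ by level sets of $\J_\nu$ along a grid $\alpha_i=i\varepsilon$, $i=0,\dots,L$, covering the relevant $\alpha$-range, yields
\[
\GL^{D/N}_n(q)\;\le\;O\!\left(\frac{\log L}{n}\right)+\max_i\!\left[\frac{\log \mathcal{N}_{\alpha_{i+1},\J_\nu}^{D/N}(n)}{n\log 2}-\alpha_i q\right].
\]
The key sub-claim is the \emph{localisation of the discrete maximiser}: for every $\eta>0$, $\varepsilon$ sufficiently small, and $n$ large enough, the maximising index $i^\ast(n)$ satisfies $\alpha_{i^\ast(n)+1}\in[a-\eta,b+\eta]$. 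Convexity of $\GL^{D/N}$ (automatic from the limit hypothesis, being a limit of log-sum-exp functions) together with $-\partial\GL^{D/N}(q)=[a,b]$ yields a strict gap $(\GL^{D/N})^{\ast}(\alpha)-\alpha q\le\GL^{D/N}(q)-\delta_0(\eta)$ for $\alpha\notin[a-\eta,b+\eta]$, with $\delta_0(\eta)>0$; combined with the general bound $\overline{F}^{D/N}(\alpha)\le(\GL^{D/N})^{\ast}(\alpha)$ this rules out maximising indices outside the band.

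Given the localisation, monotonicity of $\mathcal{N}_{\cdot,\J_\nu}^{D/N}(n)$ in $\alpha$ gives
\[
\frac{\log \mathcal{N}_{b+\eta,\J_\nu}^{D/N}(n)}{n\log 2}\;\ge\;\GL^{D/N}_n(q)+(a-\eta-\varepsilon)q-O\!\left(\frac{\log L}{n}\right).
\]
Taking $\liminf_n$, where the hypothesis $\GL^{D/N}(q)=\lim_n\GL^{D/N}_n(q)$ is indispensable, then letting $\varepsilon\to 0$ and $\eta\to 0$, and using $\GL^{D/N}(q)\ge 0$ together with $a\ge 0$ (both valid for $q\in[0,q^{D/N}]$, since $\GL^{D/N}$ is convex and non-increasing on $[0,q^{D/N}]$ with $\GL^{D/N}(q^{D/N})=0$), I obtain
\[
\underline{s}^{D/N}\;\ge\;\sup_{\eta>0}\frac{\underline{F}^{D/N}(b+\eta)}{b+\eta}\;\ge\;\frac{aq+\GL^{D/N}(q)}{b},
\]
which is the claim. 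The main obstacle is the localisation sub-claim—a standard but delicate feature of Legendre duality at a subdifferential kink—which is precisely why the hypothesis that $\GL^{D/N}(q)$ is a limit rather than only a limsup cannot be dispensed with.
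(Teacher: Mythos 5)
Your proposal is correct, and its overall architecture matches the paper's: first reduce to $\underline{F}^{D/N}\leq\underline{s}^{D/N}$ via the test-function/min--max machinery (in the paper this is \prettyref{prop:LowerBoundD} and \prettyref{prop:lowerBound N}, which instantiate \prettyref{thm:IntroGeneralPrincipleLowerBound} through the auxiliary set function $\underline{\J}_{\nu}$ and \prettyref{lem:SpecialJ_nu} — a detail you rightly defer to), and then prove the large-deviation lower bound $\bigl(aq+\GL^{D/N}(q)\bigr)/b\leq\underline{F}^{D/N}$. Where you differ is in the execution of this second step. The paper (\prettyref{prop:GeneralBound.-1} via \prettyref{lem:exponential decay-1}) introduces the tilted probability measures $\mu_{n}(\{C\})=\J(C)^{q}2^{-n\GL_{\J,n}(q)}$ and uses a Chebyshev bound on the free energy $\mathfrak{c}(x)=\GL(x+q)-\GL(q)$ to show that cubes with $\J(C)\notin(2^{-tn},2^{-sn})$, $[a,b]\subset(s,t)$, carry exponentially small $\mu_{n}$-mass, which immediately yields $\mathcal{N}_{t,\J}(n)\gtrsim 2^{sqn+n\GL_{\J,n}(q)}$. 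You instead discretise the level sets of $\J_{\nu}$ and localise the maximising band by comparing the Markov-type upper bound $\log\mathcal{N}_{\alpha}(n)/(n\log 2)\leq \alpha q'+\GL_{n}(q')$ against the subdifferential gap; this is the pigeonhole incarnation of the same Gärtner--Ellis idea, and your bookkeeping of where the full-limit hypothesis enters (to bound $\GL_{n}(q)$ from \emph{below} for all large $n$, while the limsup definition suffices to bound $\GL_{n}(q')$ from \emph{above} at the auxiliary points $q'$) is exactly right. The paper's route packages the localisation into a reusable lemma and avoids the grid/truncation bookkeeping (one should restrict the grid to $[\dim_{\infty}(\J_{\nu}),M]$ and check that cubes with $\J_{\nu}(Q)<2^{-Mn}$ are negligible for $q>0$); your route is more elementary and self-contained but requires carrying the uniform gap $\delta_{0}(\eta)$ explicitly through the estimate. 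Both yield the identical inequality $\underline{F}^{D/N}(b+\eta)\geq \GL^{D/N}(q)+(a-\eta)q+o(1)$, from which the claim follows since $b\geq a\geq\dim_{\infty}(\J_{\nu})>0$.
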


\begin{rem}
In the case that $\GL^{N}(q^{N})=\liminf\GL_{\J_{\nu},n}^{N}\left(q^{N}\right)$
and $\GL^{N}$ is differentiable in $q^{N}$, we infer $q^{N}\leq\underline{s}^{N}$
and hence obtain a direct proof of the regularity statement, namely,
$q^{N}=\underline{s}^{N}=\overline{s}^{N}$. Also, if $\GL^{D/N}(1)=\lim_{n}\GL_{\J_{\nu},n}^{D/N}\left(1\right)=d-2$,
we have the lower bound 
\[
\frac{-\partial^{+}\GL^{D/N}(1)-d+2}{-\partial^{-}\GL^{D/N}(1)}\leq\underline{s}^{D/N},
\]
where $\partial^{\pm}f\left(x\right)$ denotes the left-sided, resp.
right-sided, derivative of $f:\R_{\geq0}\to\R$ in $x>0$.
\end{rem}

We obtain general bounds for $\overline{s}^{N}$ in terms of the upper
Minkowski dimension $\overline{\dim}_{M}\left(\nu\right)$ and the
possibly smaller lower $\infty$-dimension $\dim_{\infty}\left(\nu\right)$
of $\nu$ (see also \prettyref{fig:Moment-generating-function}).
\begin{cor}
\label{cor:upper_spectralDimGeneralUpper/lowerBound}Assume $\dim_{\infty}(\nu)>d-2$.
For the Neumann upper spectral dimension we have

\[
\frac{d}{2}\leq\frac{\overline{\dim}_{M}\left(\nu\right)}{\overline{\dim}_{M}\left(\nu\right)-d+2}\leq\overline{s}^{N}\leq\frac{\dim_{\infty}\left(\nu\right)}{\dim_{\infty}\left(\nu\right)-d+2}.
\]
In particular, for $d=2$, we have $\overline{s}^{N}=1$, and assuming
$\nu(\mathring{\Q})>0$, also $\overline{s}^{D}=1.$
\end{cor}

\begin{rem}
Note that $d\geq3$ and by choosing $\nu$ with $\overline{\dim}_{M}\left(\nu\right)$
close to $d-2$ we can easily find examples where $\overline{s}^{N}$
becomes arbitrarily large.

\begin{figure}
\center{\begin{tikzpicture}[scale=0.7, every node/.style={transform shape},line cap=round,line join=round,>=triangle 45,x=1cm,y=1cm] \begin{axis}[ x=2.7cm,y=2.7cm, axis lines=middle, axis line style={very thick},ymajorgrids=false, xmajorgrids=false, grid style={thick,densely dotted,black!20}, xlabel= {$q$}, ylabel= {$\GL^N(q)$}, xmin=-0.49 , xmax=3.5 , ymin=-0.3, ymax=2.5,x tick style={color=black}, xtick={0,1,2,2.48,3},xticklabels = {0,1,2,$q^N$,3},  ytick={0,1,2},yticklabels = {0,1,2}] \clip(-0.5,-0.3) rectangle (4,4); 
\draw[line width=1pt,smooth,samples=180,domain=-0.3:3.4] plot(\x,{log10(0.08^((\x))+0.2^((\x))+0.36^((\x))+0.36^((\x)))/log10(2)+\x}); 
\draw [line width=01pt,dotted, domain=-0.05 :4.4] plot(\x,{(((log10(0.36))/(log10(2))+1)*(\x-1)+1)});
\draw [line width=01pt,dashed, domain=-0.05 :4.4] plot(\x,{(1-\x)+1)});
 
\node[circle,draw] (c) at (2.48 ,0 ){\,};

\draw [line width=.7pt,dotted, gray] (1 ,0.)--(1,1); 
\draw [line width=.7pt,dotted, gray] (0  ,1.0 )-- (1,1);
\draw (0.07 ,2.13 ) node[anchor=north west] {$ \displaystyle{\overline{\dim}_M(\nu)}$}; 
\end{axis} 
\end{tikzpicture}}

\caption{\label{fig:Moment-generating-function}Partition function $\GL^{N}$
in dimension $d=3$ for the self-similar measure $\nu$ supported
on the\emph{ Sierpi\'{n}ski tetraeder} with all four contraction
ratios equal $1/2$ and with probability vector $\left(0.36,0.36,0.2,0.08\right)$.
Natural bounds for $\overline{s}^{N}=q^{N}$ in this setting are the
zeros of the dashed line $x\protect\mapsto-x\left(\GL^{N}\left(0\right)-1\right)+\GL^{N}\left(0\right)$
and the dotted line $x\protect\mapsto\left(1-x\right)\left(\dim_{\infty}\left(\nu\right)-1\right)+1$
as given in \prettyref{cor:upper_spectralDimGeneralUpper/lowerBound}.
In this case $\GL^{N}\left(0\right)=\overline{\dim}_{M}\left(\nu\right)=2$
and $\dim_{\infty}\left(\nu\right)=-\log\left(0.36\right)/\log\left(2\right)=1.47\ldots$}
\end{figure}
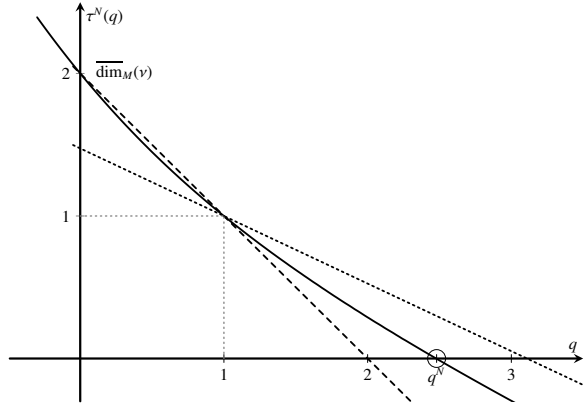
 It is also worth mentioning that the analogous situation in dimension
$d=1$ is quite different (cf\@. \autocite{KN21,KN2022}), namely
the lower bound becomes an upper bound, 
\[
\overline{s}^{N/D}\leq\frac{\overline{\dim}_{M}\left(\nu\right)}{\overline{\dim}_{M}\left(\nu\right)+1}\leq\frac{1}{2}.
\]
The inequalities in \prettyref{cor:upper_spectralDimGeneralUpper/lowerBound}
naturally link to the famous question by M. Kac \cite{MR0201237},
\emph{`Can one hear the shape of a drum}?' This question has been
modified by various authors e.\,g\@. in \cite{MR556688,MR573427,MR834484,MR994168},
and closer to our context by Triebel in \cite{MR1484417}. In the
plane, the spectral dimension does not encode any information about
the fractal-geometric nature of the underlying measure as we always
have $\overline{s}^{D/N}=1$ for any finite Borel measure with $\nu(\mathring{\Q})>0$.
This has been observed in \cite{MR1484417} for the special case of
$\alpha$-Ahlfors–David regular measures. For all other dimensions,
our results show that the upper spectral dimension $\overline{s}^{N}$
is uniquely determined by the spectral partition function $\GL^{N}$,
which in turn reflects many important fractal-geometric properties
of $\nu$. For the case $d>2$, this common ground provides interesting
bounds on the upper Minkowski dimension of the support of $\nu$ and
the lower $\infty$-dimension of $\nu$ in terms of the upper spectral
dimension as follows:
\[
\overline{\dim}_{M}\left(\nu\right)\geq\frac{\overline{s}^{N}\left(d-2\right)}{\overline{s}^{N}-1}\geq\dim_{\infty}\left(\nu\right).
\]
So the answer to Kac's question is `partially yes'. If additionally
the $L^{q}$-spectrum $\beta_{\nu}^{N}$ is an affine function, we
obtain $\beta_{\nu}^{N}\left(q\right)=\overline{\dim}_{M}\left(\nu\right)+\overline{\dim}_{M}\left(\nu\right)(1-q)$
and with \prettyref{cor:upper_spectralDimGeneralUpper/lowerBound}
\[
\dim_{\infty}\left(\nu\right)=\overline{\dim}_{M}\left(\nu\right)=\frac{\overline{s}^{N}\left(d-2\right)}{\overline{s}^{N}-1}.
\]
In this case, Kac's question regarding dimensional quantities must
be answered in the affirmative.
\end{rem}

\subsection{Special examples and spectral asymptotic bounds}

On the one hand, our methods are in some respects a refinement of
the methods developed by Birman and Solomyak, since we are able to
determine the exact upper spectral dimension for all relevant situations,
many of which were previously inaccessible. On the other hand, their
methods often allow us to obtain upper spectral asymptotic bounds.
The essence of Birman's and Solomyak's achievements, with contributions
from Rozenblum, in this regard is contained in the following examples.
In the following we write $f(x)\ll g\left(x\right)$ if there is a
constant $C>0$ such that for all $x$ large enough $f\left(x\right)\le Cg\left(x\right)$;
if $f\left(x\right)\ll g\left(x\right)$ and $g\left(x\right)\ll f\left(x\right)$,
then we write $f\left(x\right)\asymp g\left(x\right)$. If $g(x)/f(x)\to1$
as $x\to\infty$, we write $f\left(x\right)\sim g\left(x\right)$.

\subsubsection{Absolutely continuous measures}

As a first application of \prettyref{thm:MainChain_of_Inequalities+Regularity},
we present the case of absolutely continuous measures first studied
by H.~Weyl \cite{zbMATH02629881} and in higher generality by Birman
and Solomyak in \cite{MR0278126,MR0482138}.
\begin{prop}
\label{prop:spec_absolutely_cont}Let $\nu$ be absolutely continuous
with respect to the $d$-dimensional Lebesgue measure with density
that is $r$-integrable for some $r>d/2$. Then the Dirichlet and
Neumann spectral partition function exist as a limit with 
\[
\GL^{N}\left(q\right)=\GL^{D}\left(q\right)=d-2q,\;\:\text{ for }q\in\left[0,r\right),
\]
$\nu$ is D/N-PF-regular, and the Dirichlet and Neumann spectral dimension
exist, coincide and equal $s^{D}=s^{N}=d/2$.
\end{prop}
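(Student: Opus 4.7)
The plan is to split the proof into three stages: establish the integrability-versus-dimension equivalence, compute the spectral partition function $\GL^{D/N}$ explicitly, and then read off the spectral dimensions via Theorem~\prettyref{thm:LqRegularImpliesRegular}.

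For the equivalence, the forward direction is Hölder: if $f\in L^r(\Q)$ with $r>d/2$, then
\[
\nu(Q)=\int_Q f\d\Lambda\le\|f\|_r\,\Lambda(Q)^{1-1/r}\qquad(Q\in\mathcal{D}),
\]
which yields $\dim_\infty(\nu)\ge d(1-1/r)>d-2$. For the converse I would start from $\dim_\infty(\nu)>d-2$, pick $\alpha\in(d-2,\dim_\infty(\nu))$ and $N$ so that $\nu(Q)\le C\Lambda(Q)^{\alpha/d}$ on $\bigcup_{n\ge N}\mathcal{D}_n^N$, and then exploit this Frostman-type bound via a Calderón--Zygmund/Vitali-type covering argument on the level sets $\{M_d f>t\}$ of the dyadic maximal function to conclude that $f\in L^r$ for every $r$ in the non-empty window $(d/2,d/(d-\alpha))$.

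For the partition function, fix $r>d/2$ with $f\in L^r$ and $q\in[0,r)$. Hölder applied to each subcube $\tilde Q\in\mathcal{D}(Q)$ gives
\[
\nu(\tilde Q)\,\Lambda(\tilde Q)^{2/d-1}\le\|f\|_{L^r(Q)}\,\Lambda(\tilde Q)^{(2r-d)/(rd)},
\]
and since $(2r-d)/(rd)>0$ the supremum defining $\J_\nu(Q)$ is attained at $\tilde Q=Q$, whence $\J_\nu(Q)\le\|f\|_{L^r(Q)}\Lambda(Q)^{(2r-d)/(rd)}$. Summing over the $2^{nd}$ cubes in $\mathcal{D}_n^N$ and applying Jensen's inequality to the concave map $x\mapsto x^{q/r}$ produces
\[
\sum_{Q\in\mathcal{D}_n^N}\J_\nu(Q)^q\le 2^{nd(1-q/r)}\|f\|_r^q\cdot 2^{-nq(2r-d)/r}=\|f\|_r^q\,2^{n(d-2q)}.
\]
For the matching lower bound I would take $\tilde Q=Q$ in the sup and use the asymptotic $\sum_Q\nu(Q)^q\gtrsim\|f\|_q^q\,2^{nd(1-q)}$, which for $q\ge 1$ follows from $L^q$-convergence of the dyadic martingale $f_n=\sum_Q(\nu(Q)/\Lambda(Q))\mathbf{1}_Q$ to $f$, and for $q\in[0,1)$ follows from Jensen applied to the concave function $x\mapsto x^q$ inside each $Q$. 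Combining identifies $\GL^N(q)=d-2q$ as a genuine limit on $[0,r)$; the Dirichlet variant is identical because the $O(2^{n(d-1)})$ boundary cubes in $\mathcal{D}_n^N\setminus\mathcal{D}_n^D$ are asymptotically negligible against the $2^{nd}$ interior ones.

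Finally, the map $q\mapsto d-2q$ is linear, trivially differentiable, and has unique zero $q^{D/N}=d/2$; together with the limit statement above this verifies both D- and N-PF-regularity. Since moreover $\GL^N(q^D)=\GL^N(d/2)=0$, Theorem~\prettyref{thm:LqRegularImpliesRegular} yields the existence and coincidence of both spectral dimensions with common value $s^{D}=s^{N}=d/2$. The hardest part of the argument will be the converse half of the integrability equivalence: turning a purely measure-theoretic Frostman bound on $\nu$ into a quantitative $L^r$-bound on the density requires a delicate level-set covering argument rather than a one-line Hölder application.
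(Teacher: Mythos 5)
Most of your proposal is sound and follows the same route the paper takes: the Hölder estimate $\nu(Q)\le\left\Vert f\right\Vert _{r}\Lambda(Q)^{1-1/r}$ for the ``if'' direction, the computation $\GL^{D/N}(q)=d-2q$ as a limit on $[0,r)$ (the paper's \prettyref{prop:=00005CGl_for Absoluterly continuous} does the upper bound with Jensen's inequality $\nu(Q)^{q}\le\bigl(\int_{Q}f^{q}\d\Lambda\bigr)\Lambda(Q)^{q-1}$ for $q\ge d/2$ and convexity of $\GL$ for $q<d/2$, where you use Hölder plus the power-mean inequality; both work, and your martingale/Jensen argument for the lower bound is the content of \prettyref{fact:fact:Properties =00005Cbeta_=00005Cnu}(7)), and the final appeal to \prettyref{thm:LqRegularImpliesRegular}. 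Your treatment of the Dirichlet case is a little quick (the lower bound needs $\beta_{\nu}^{D}(q)=d(1-q)$, i.e.\ that the boundary layer of width $2^{-n}$ contributes $o(1)$ to $\int f_{n}^{q}\d\Lambda$), but that is easily repaired.

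The genuine gap is the converse half of the equivalence, and it is not repairable by the method you propose. A Frostman bound $\nu(Q)\le C\Lambda(Q)^{\alpha/d}$ with $\alpha>d-2$ does \emph{not} imply $f\in L_{\Lambda}^{r}$ for $r$ in $(d/2,d/(d-\alpha))$. In the Calderón--Zygmund decomposition of $\left\{ M_{d}f>t\right\} $ into maximal dyadic cubes $Q_{j}$ with $\nu(Q_{j})>t\Lambda(Q_{j})$, the Frostman bound only controls the side length of each individual stopping cube, $\ell(Q_{j})\le(C/t)^{1/(d-\alpha)}$; their total Lebesgue measure is controlled only by disjointness, $\sum_{j}\Lambda(Q_{j})<t^{-1}\sum_{j}\nu(Q_{j})\le t^{-1}\nu(\Q)$, which is the trivial weak-$(1,1)$ estimate and yields nothing beyond $r=1$. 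In fact the implication itself fails: take $d=3$, $R_{k}\coloneqq[0,2^{-k}]^{2}\times[2^{-k},2^{-k+1}]$, $E_{k}\coloneqq[0,2^{-k}]^{2}\times[2^{-k},2^{-k}+2^{-n_{k}}]$ with $n_{k}=k^{2}$, and $f\coloneqq\sum_{k}2^{n_{k}}\1_{E_{k}}$. Then $\nu(E_{k})=4^{-k}$, so $\nu$ is a finite absolutely continuous measure, and checking the three cases ($Q\subset R_{k}$ with $m\le n_{k}$, $Q\subset R_{k}$ with $m>n_{k}$, and $Q\supset R_{k}$) gives $\nu(Q)\le\tfrac{4}{3}\Lambda(Q)^{2/3}$ for every $Q\in\mathcal{D}_{m}^{N}$, hence $\dim_{\infty}(\nu)\ge2>d-2$; yet $\left\Vert f\right\Vert _{r}^{r}=\sum_{k}2^{(r-1)n_{k}-2k}=\infty$ for every $r>1$, so $f$ is not $r$-integrable for any $r>d/2$. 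So no covering argument can close this step; note that the paper itself never proves this direction either, and everything it actually uses --- and everything in the second half of your proposal --- rests only on the ``if'' direction.
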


Note that under the assumption of \prettyref{prop:spec_absolutely_cont},
it has been shown in \cite{MR0278126} that indeed 
\[
N^{D/N}\left(x\right)\sim\frac{x^{d/2}}{\left(4\pi\right)^{d/2}\Gamma\left(d/2+1\right)}\int\left(\frac{\d\nu}{\d\Lambda}\right)^{d/2}\d\Lambda.
\]
For related results on spectral properties of higher order elliptic
differential operators with respect to absolutely continuous measures
we refer the interested reader to \cite{MR1409364}.

In \prettyref{sec:The-critical-case} we treat absolutely continuous
measures in dimension $d=3$ with densities which are $r$-integrable
for $r\in\left[1,d/2\right)$ and not $r$-integrable for $r>d/2$.
These are critical cases with respect to our condition \prettyref{eq:assumption infinity dim}
and the spectral properties can only be determined by a finer analysis.
Such examples concerning the critical case, i.\,e\@. $\dim_{\infty}\nu=d-2$,
and those where the spectral dimension does not exist are deferred
to the last section.

In the context of absolutely continuous measures the following rigidity
result, which has been obtained for $d=1$ in \cite[Cor. 1.4]{KN2022},
is also of interest.
\begin{prop}
\label{prop:q-1 implies sN=00003Dd/2} For $d\geq3$ and $\dim_{\infty}(\nu)>d-2$,
the following rigidity result holds:
\begin{enumerate}
\item If $\overline{s}^{N}=d/2$, then $\GL^{N}\left(q\right)=d-2q$ for
all $q\in\left[0,d/2\right]$.
\item If $\GL^{N}\left(q\right)=d-2q$ for some $q>d/2$, then $\GL^{N}\left(q'\right)=d-2q'$
for all $q'\in[0,q]$ and $\overline{s}^{N}=d/2$. If additionally
$\GL^{N}\left(d/2\right)$ exists as a limit, then $s^{N}=d/2$.
\end{enumerate}
\end{prop}

\subsubsection{Ahlfors–David regular measure\label{subsec:Ahlfors=002013David-regular-measure}}

As a second application, we consider a class of measures with linear
partition functions, namely we treat $\alpha$-\emph{Ahlfors}–\emph{David
regular measures $\nu$} on $\Q$ for $\alpha>0$. We call a measure\emph{
$\alpha$-Ahlfors–David regular} if for some $c>0$, all $x\in\supp\left(\nu\right)$
and $r\in(0,\diam(\supp(\nu))]$ we have
\begin{equation}
c^{-1}r^{\alpha}\leq\nu\left(B(x,r)\right)\leq cr^{\alpha}.\label{eq:UpperAhlfors}
\end{equation}
Note that for $\alpha$-Ahlfors–David regular measures $\nu$ we have
$\GL_{\J_{\nu}}^{D/N}\left(q\right)=\beta_{\nu}^{D/N}\left(q\right)+\left(2-d\right)q=\left(\alpha+2-d\right)q-\alpha$
and in particular, $\alpha=\dim_{M}\left(\nu\right)=\dim_{\infty}\left(\nu\right)$.
\begin{prop}
\label{prop:Ahlfors-David-Regular}Let $\nu$ be a finite $\alpha$-Ahlfors–David
regular Borel measure with $\alpha\in\left(d-2,d\right]$, $d>2$
and such that $\nu\left(\mathring{\Q}\right)>0$, then
\[
N^{D/N}(x)\asymp x^{\alpha/\left(\alpha-d+2\right)}.
\]
In particular, $s^{D}=s^{N}=\alpha/\left(\alpha-d+2\right)$.
\end{prop}

This proposition rediscovers some of the major achievements on isotropic
$\alpha$-sets $\Gamma$ (in our terms this means that the $\alpha$-dimensional
Hausdorff measure restricted to $\Gamma$ is $\alpha$-Ahlfors–David
regular) as investigated by Triebel in \cite{MR1484417}. For $d=2$
our result is partially contained in Rozenblum et al\@. \cite{MR4331823}
where the upper asymptotic bound has been obtained, namely $N^{D/N}\left(x\right)\ll x$.

Moreover, we partially revisit some results of the recent publications
\autocite{MR4331823,MR4484835} by Rozenblum et al. in which the eigenvalue
asymptotics of Birman–Schwinger type operators is discussed in detail.
In our setting, the inverse spectral problems of Kre\u{\i}n-Feller
operators are special cases of Birman–Schwinger type operators with
respect to Sobolev space of order 1. The case $d>2$ corresponds to
the so-called subcritical case considered in \cite[Theorem 3.3-3.4]{MR4484835};
this case for differential order 1 corresponds to our result. More
precisely, in \cite[Theorem 3.3-3.4]{MR4484835}, using some clever
covering arguments, it is shown that $N^{D/N}(x)\ll x^{\alpha/\left(\alpha-d+2\right)}$
is valid alone under the relaxed assumption that only the second inequality
in \prettyref{eq:UpperAhlfors} holds. Clearly, under this assumption
we have $\dim_{\infty}(\nu)\geq\alpha>d-2$, hence our general assumption
\prettyref{eq:assumption infinity dim} is satisfied. Therefore, using
our general upper bound from \prettyref{cor:upper_spectralDimGeneralUpper/lowerBound}
we obtain

\begin{equation}
\overline{s}^{D/N}\leq\frac{\dim_{\infty}\left(\nu\right)}{2-d+\dim_{\infty}\left(\nu\right)}\leq\frac{\alpha}{\alpha-d+2}.\label{eq:bound Rozenblum}
\end{equation}
Note that this inequality is sharp; if $\overline{\dim}_{M}(\nu)=\dim_{\infty}(\nu)$
– this holds in particular for Ahlfors–David regular measures (\prettyref{subsec:Ahlfors=002013David-regular-measure}),
then the $L^{q}$-spectrum of $\nu$ is linear, given by $q\mapsto\overline{\dim}_{M}(\nu)(1-q)$
on $\R_{>0}$ and by \prettyref{cor:upper_spectralDimGeneralUpper/lowerBound}
we have equality in \prettyref{eq:bound Rozenblum} and therefore
the upper asymptotics $N^{D/N}(x)\ll x^{\alpha/\left(\alpha-d+2\right)}$
is optimal. On the other hand, for all measures $\nu$ such that $\GL_{\J_{\nu}}^{N}$
is not affine-linear on $\left(0,q^{N}\right)$ the first inequality
in \prettyref{eq:bound Rozenblum} is strict, the polynomial upper
bound $x^{\alpha/\left(\alpha-d+2\right)}$ is therefore far from
optimal and our result improves the result of \cite[Theorem 3.3-3.4]{MR4484835},
in that we determine the smallest exponent for an upper asymptotic
which is strictly smaller than $\alpha/\left(2-d+\alpha\right)$.

\subsubsection{Self-conformal measures\label{subsec:Applications}}

Finally, we give an example where the spectral partition function
is essentially given by the $L^{q}$-spectrum of $\nu$ (see \prettyref{subsec:Conformal-IFS}
and \prettyref{subsec:Absolutely-continuous-measures}) and in this
case we are able to provide the complete picture provided by our main
theorem. In fact, we deal with self-conforming measures with possible
overlaps, following up on the question explicitly posed in this context
in \cite[Sec.  5]{Ngai_2021}. The existence and basic properties
of such measures originate from the seminal work \cite{MR625600}.
\begin{thm}
\label{thm:Self-Conform} If $\nu$ is a self-conformal measure on
the closed unit cube $\Q$ with $\nu(\partial\Q)=0$, contractions
$\left\{ S_{i}:\Q\rightarrow\Q\right\} _{i=1,\dots,\ell}$ (with possible
overlaps) and probability vector $\left(p_{i}\right)_{i=1,\ldots,\ell}$,
$\ell\geq2$ (see \prettyref{subsec:Conformal-IFS} for precise definitions),
then the spectral partition function exists as a limit and is given
by 
\[
\GL^{D/N}\left(q\right)=\beta_{\nu}^{N}\left(q\right)+\left(d-2\right)q,
\]
where $\beta_{\nu}^{N}$ denotes the \emph{Neumann}-\emph{$L^{q}$-spectrum
}of $\nu$ (see \prettyref{subsec:The_Lq_spectrum} for definition).
Assuming $\dim_{\infty}(\nu)>d-2$, then $\nu$ is D/N-PF-regular
and the Dirichlet and Neumann spectral dimension exist and equal $s^{D}=s^{N}=q^{N}$.
In particular, in the case $d=2$, we always have $s^{D}=s^{N}=1$.
\end{thm}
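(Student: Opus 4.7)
The plan is to identify the spectral partition function explicitly with the $L^{q}$-spectrum (shifted by $(d-2)q$) and then apply \prettyref{thm:LqRegularImpliesRegular}. Self-conformality gives existence of $\beta^{N}_{\nu}$ as a limit, and the assumption $\nu(\partial\Q)=0$ then forces $\beta^{D}_{\nu}=\beta^{N}_{\nu}$, which in turn forces $\GL^{D}=\GL^{N}$ and $q^{D}=q^{N}$.

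First I would establish the identity $\GL^{D/N}(q)=\beta^{N}_{\nu}(q)+(d-2)q$. The lower bound follows from the trivial choice $\widetilde{Q}=Q$ in the defining supremum of $\J_{\nu}(Q)$, which yields $\J_{\nu}(Q)\geq\nu(Q)2^{n(d-2)}$ for $Q\in\mathcal{D}_{n}^{D/N}$, so that $\sum_{Q}\J_{\nu}(Q)^{q}\geq 2^{nq(d-2)}\sum_{Q}\nu(Q)^{q}$. For the matching upper bound one reorganises the sum via the dyadic tree structure: using $(\sup_{i}a_{i})^{q}\leq\sum_{i}a_{i}^{q}$ for $q\geq 0$ together with the fact that every $\widetilde{Q}\in\mathcal{D}_{m}^{N}$ with $m\geq n$ is contained in exactly one $Q\in\mathcal{D}_{n}^{N}$, one obtains
\[
\sum_{Q\in\mathcal{D}_{n}^{D/N}}\J_{\nu}(Q)^{q}\;\leq\;\sum_{m\geq n}2^{mq(d-2)}\sum_{\widetilde{Q}\in\mathcal{D}_{m}^{N}}\nu(\widetilde{Q})^{q}.
\]
The $L^{q}$-spectrum of a self-conformal measure exists as a genuine limit (a classical fact based on conformal distortion estimates which accommodates overlaps), so the inner sum grows like $2^{m\beta^{N}_{\nu}(q)+o(m)}$. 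Combined with the hypothesis $\dim_{\infty}(\nu)>d-2$—which through the uniform bound $\nu(\widetilde{Q})\Lambda(\widetilde{Q})^{2/d-1}\leq C\cdot 2^{-m(\dim_{\infty}(\nu)-(d-2)-\varepsilon)}$ forces geometric decay in $m$—the series is dominated by its $m=n$ term, giving the reverse inequality.

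Second, with the identification $\GL^{D/N}(q)=\beta^{N}_{\nu}(q)+(d-2)q$ in hand, convexity and differentiability of $\beta^{N}_{\nu}$ on $(0,\infty)$ (standard for self-conformal measures) make $\nu$ both N- and D-PF-regular. Since $\nu(\partial\Q)=0$, cubes in $\mathcal{D}_{n}^{N}\setminus\mathcal{D}_{n}^{D}$ carry asymptotically no $\nu$-mass, forcing $\beta^{D}_{\nu}=\beta^{N}_{\nu}$ and therefore $\GL^{D}=\GL^{N}$, $q^{D}=q^{N}$, and $\GL^{N}(q^{D})=0$. Applying \prettyref{thm:LqRegularImpliesRegular} gives $s^{D}=s^{N}=q^{N}$. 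For $d=2$, the formula collapses to $\J_{\nu}(Q)=\nu(Q)$ and $\GL^{D/N}=\beta^{N}_{\nu}$, so $q^{N}=1$ follows from $\beta^{N}_{\nu}(1)=0$; this is also an immediate consequence of part~(3) of \prettyref{thm:MainChain_of_Inequalities+Regularity}.

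The main obstacle lies in Step~1: the naive re-summation estimate is sharp only for $q>q^{N}$, where the geometric series converges. To pin down $\GL^{D/N}$ on a full left neighbourhood of $q^{N}$, which is what N-PF-regularity requires, one must use that for self-conformal $\nu$ the maximum of $\nu(\widetilde{Q})$ over sub-cubes of a given $Q$ is controlled by $\nu(Q)$ up to a distortion constant, so that the supremum defining $\J_{\nu}(Q)$ is essentially attained at $\widetilde{Q}=Q$. Combining this comparability with $\dim_{\infty}(\nu)>d-2$ closes the gap between upper and lower bounds uniformly in $q$, and the remaining deductions are mechanical applications of the general results of the paper.
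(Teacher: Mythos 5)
Your architecture coincides with the paper's: identify $\GL_{\J_{\nu}}^{D/N}(q)$ with $\beta_{\nu}^{N}(q)+(d-2)q$ as a genuine limit and then feed this into \prettyref{thm:LqRegularImpliesRegular}. You also correctly diagnose the crux: the re-summation bound $\sum_{Q\in\mathcal{D}_{n}}\J_{\nu}(Q)^{q}\leq\sum_{m\geq n}2^{m(d-2)q}\sum_{\widetilde{Q}\in\mathcal{D}_{m}}\nu(\widetilde{Q})^{q}$ is summable only when $\beta_{\nu}^{N}(q)+(d-2)q<0$, i.e.\ for $q>q^{N}$, so a genuinely self-conformal comparability estimate is needed for $q\leq q^{N}$. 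But that estimate is the entire content of \prettyref{prop:conformal-->GL=00003Dbeta} and you leave it unproved; worse, the version you state---``the maximum of $\nu(\widetilde{Q})$ over sub-cubes of $Q$ is controlled by $\nu(Q)$ up to a distortion constant''---is trivially true by monotonicity and is not what is needed. The nontrivial claim is that $\nu(\widetilde{Q})\Lambda(\widetilde{Q})^{2/d-1}\leq C\,\nu(Q_{3})\Lambda(Q)^{2/d-1}$ for every $\widetilde{Q}\in\mathcal{D}(Q)$, where $Q_{3}$ is the union of the level-$n$ neighbours of $Q$: the negative exponent on $\Lambda$ must not cause blow-up as $\widetilde{Q}$ shrinks. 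The paper proves this by decomposing $\nu(\widetilde{Q})=\sum_{u\in W_{n}}p_{u}\nu(S_{u}^{-1}\widetilde{Q})$ over the stopping-time words $W_{n}$, covering each $S_{u}^{-1}(\widetilde{Q})$ by boundedly many dyadic cubes of comparable size, and invoking the finiteness of $K\coloneqq\sup_{Q}\nu(Q)\Lambda(Q)^{(2-d)/d}$, which is exactly where $\dim_{\infty}(\nu)>d-2$ enters; bounded distortion alone does not yield it.

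Two further gaps. First, your deduction of $\beta_{\nu}^{D}=\beta_{\nu}^{N}$ from ``boundary cubes carry asymptotically no mass'' does not work: for $q<1$ the boundary contribution $\sum_{Q\in\mathcal{D}_{n}^{N}\setminus\mathcal{D}_{n}^{D}}\nu(Q)^{q}$ can be of order $2^{(d-1)n(1-q)}$ times the $q$-th power of the strip's mass, which may dominate the interior sum even though the strip's mass tends to $0$. The paper's \prettyref{prop:Conformal-->betaD=00003DbetaN} instead uses $\nu(\partial\Q)=0$ to produce a word $u$ with $S_{u}(\Q)\subset\mathring{\Q}$ and then quasi-multiplicativity (Lemmas 2.2 and 2.4 of \cite{MR1838304}) to bound the full Neumann sum at level $m$ by a constant times the Dirichlet sum at level $n+m$. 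Second, differentiability of $\beta_{\nu}^{N}$ is not ``standard'' for self-conformal measures with overlaps and may fail; it is also unnecessary, since D/N-PF-regularity already follows from the first clause of the definition (the partition function exists as a limit on a left neighbourhood of $q^{D/N}$), which is precisely what \cite[Theorem 1.1]{MR1838304} combined with the identification above delivers. With these three points repaired, your proof becomes the paper's.
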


\begin{rem}
We remark that in the situation of \prettyref{thm:Self-Conform} under
OSC the spectral dimension can be expressed in terms of an associated
pressure function, i.\,e\@. $q^{N}$ is the unique zero of $q\mapsto P\left(q(\psi+(2-d)\varphi)\right)$,
where $\psi:\left\{ 1,\ldots,\ell\right\} ^{\N}\to\R:\omega\mapsto\log p_{\omega_{1}}$
and $\varphi:\left\{ 1,\ldots,\ell\right\} ^{\N}\to\R:\omega\mapsto\log\left\Vert S'_{\omega_{1}}\left(\sigma\omega\right)\right\Vert $.
Moreover, if the open set condition is satisfied and the measure is
given either by affine contractions (see \cite{MR1298682}) or as
a Gibbs measures constructed by a one-dimensional conformal IFS (see
\cite{KN21}), the following asymptotics hold (see \prettyref{rem:self-similar_IFS_OSC}
for a short proof of the upper spectral asymptotics in the self-similar
case) 
\[
N^{D/N}(x)\asymp x^{q^{N}}.
\]
\end{rem}

\begin{rem}
In general, it can be difficult to verify the condition $\dim_{\infty}(\nu)>d-2$,
but in the case $d=2$ a sufficient condition is that the measure
$\nu$ is invariant with respect to an IFS given by a system of bi-Lipschitz
contractions such that the attractor is not a singleton (\cite[Lemma 5.1]{MR2261337}).
This carries over to self-similar measures provided that the contractive
similitudes do not share the same fixed point, so that $\dim_{\infty}(\nu)>0$
and the spectral dimension is then given by $s^{D}=s^{N}=1$.
\end{rem}

\section{\label{sec:Form-approach}Some technical prerequisites}

In this section we provide some technical details needed in the proofs
of our main theorems. Throughout, we assume $\card(\supp(\nu))=\infty$,
or equivalently, $L_{\nu}^{2}\left(\Omega\right)$ is an infinite
dimensional vector space.

\subsection{Stein extensions\label{subsec:Stein-extensions}}

We will use the fact, going back to Stein \autocite[Sec. 3.2 and 3.3]{MR0290095},
that any bounded Lipschitz domain $\Omega\subset\R^{d}$ permits a\emph{
Stein extension} in the sense that there exists a bounded linear operator
\[
\mathfrak{E}_{\Omega}:\mathcal{C}_{c}^{\infty}\left(\overline{\Omega}\right)\rightarrow\mathcal{C}_{c}^{\infty}\left(\R^{d}\right)
\]
such that for all $f\in\mathcal{C}_{c}^{\infty}\left(\overline{\Omega}\right)$
we have $\mathfrak{E}_{\Omega}\left(f\right)\!\restriction_{\overline{\Omega}}\,=f$.
Clearly, by continuation, this operator gives rise to a continuous
linear operator $\mathfrak{E}_{\Omega}:H^{N}\left(\Omega\right)\rightarrow H^{1}\left(\R^{d}\right)$
such that $\mathfrak{E}_{\Omega}\left(f\right)\!\restriction_{\overline{\Omega}}\,=f$
$\Lambda$-a.\,e\@. for all $f\in H^{N}\left(\Omega\right)$, which
in the literature is also called a Stein extension. The existence
of the operator on $\mathcal{C}_{c}^{\infty}\left(\overline{\Omega}\right)$
is not stated explicitly in \autocite[Sec. 3.2 and 3.3]{MR0290095},
but in Stein's proof we observe that the auxiliary functions $\Lambda_{+}$
and $\Lambda_{-}$ defined therein have compact support provided $\Omega$
is bounded. Since the extension is then constructed as the product
of smooth functions with compact support with a finite sum of smooth
functions, our requirements are met. Now, $\mathring{\Q}$ as a bounded
convex open set is a bounded Lipschitz domain (see e.\,g\@. \cite[Corollary 1.2.2.3]{MR775683}
or \cite[Example 2,  p. 189]{MR0290095}), the Stein extension $\mathfrak{E}_{\Q}$
with the above properties exists.
\begin{lem}
\label{lem:equivalenzNorm}There exists a constant $D_{\Q}>0$ such
that for all cubes $Q\subset\Q$ with edges parallel to the coordinate
axes and $u\in H^{N}\left(Q\right)$, 
\[
D_{\Q}\left\Vert u\right\Vert _{H^{N}\left(Q\right)}^{2}\leq\left\Vert \nabla u\right\Vert _{L_{\Lambda}^{2}\left(Q\right)}^{2}+\frac{1}{\Lambda\left(Q\right)}\left|\int_{Q}u\d\Lambda\right|^{2}\leq\left\Vert u\right\Vert _{H^{N}\left(Q\right)}^{2}.
\]
Further, let $T:\R^{d}\rightarrow\R^{d},x\mapsto x_{0}+hx,$ with
$h\in\left(0,1\right)$, $x_{0}\in\Q$, such that $\mathring{Q}=T\left(\mathring{\Q}\right)$.
Then $\mathfrak{E}_{Q}:H^{N}\left(Q\right)\to H^{1}\left(\R^{d}\right),\,u\mapsto\mathfrak{E}_{\Q}(u\circ T)\circ T^{-1}$
defines a Stein extension and with $N_{\Lambda}\left(Q\right)\coloneqq\left\{ u\in H^{N}\left(Q\right):\int_{Q}u\d\Lambda=0\right\} $
we have 
\[
\left\Vert \mathfrak{E}_{Q}\!\restriction_{N_{\Lambda}\left(Q\right)}\right\Vert \leq\left\Vert \mathfrak{E}_{\Q}\right\Vert /D_{\Q}.
\]
\end{lem}

\begin{proof}
Clearly, by the Cauchy-Schwarz inequality, for all $u\in H^{N}\left(Q\right)$,
we have $\left|\int_{Q}u\d\Lambda\right|^{2}/\Lambda\left(Q\right)\leq\left\Vert u\right\Vert _{L_{\Lambda}^{2}\left(Q\right)}^{2}$
proving the second inequality. By \cite[Lemma 3, p. 500]{MR1839473}
there exists $C_{\Q}>0$ such that for all $u\in H^{N}(\Q)$
\[
C_{\Q}\left(\int_{\Q}u^{2}\d\Lambda\right)\leq\left\Vert \nabla u\right\Vert _{L_{\Lambda}^{2}(\Q)}^{2}+\left|\int_{\Q}u\d\Lambda\right|^{2}.
\]
Let $T:\R^{d}\rightarrow\R^{d},x\mapsto x_{0}+hx,$ with $h\in\left(0,1\right)$,
$x_{0}\in\Q$, such that the cube $Q\coloneqq T\left(\Q\right)$.
First, note that $u\circ T\in H^{N}(\Q)$ and $\left\Vert \nabla\left(u\circ T\right)\right\Vert _{L_{\Lambda}^{2}(\Q)}^{2}=h^{2-d}\left\Vert \nabla u\right\Vert _{L_{\Lambda}^{2}\left(Q\right)}^{2}$,
for all $u\in H^{N}\left(Q\right)$, leading to
\begin{align*}
\frac{C_{\Q}}{h^{d}}\int_{Q}u^{2}\d\Lambda & =C_{\Q}\int_{\Q}u^{2}\circ T\d\Lambda\leq\left\Vert \nabla\left(u\circ T\right)\right\Vert _{L_{\Lambda}^{2}(\Q)}^{2}+\left|\int_{\Q}u\circ T\d\Lambda\right|^{2}\\
 & =h^{2-d}\left\Vert \nabla u\right\Vert _{L_{\Lambda}^{2}\left(Q\right)}^{2}+h^{-2d}\left|\int_{Q}u\d\Lambda\right|^{2}.
\end{align*}
Hence, using $h<1$, we obtain 
\begin{align*}
C_{\Q}\left(\int_{Q}u^{2}\d\Lambda+\left\Vert \nabla u\right\Vert _{L_{\Lambda}^{2}\left(Q\right)}^{2}\right) & \leq(1+C_{\Q})\left(\left\Vert \nabla u\right\Vert _{L_{\Lambda}^{2}\left(Q\right)}^{2}+\frac{1}{\Lambda\left(Q\right)}\left|\int_{Q}u\d\Lambda\right|^{2}\right).
\end{align*}
The remaining assertion follows form this norm equivalence by straight
forward calculation.
\end{proof}
The following restriction method is standard and can be found e.\,g\@.
in \cite{MR2777530} (see also \cite{elib_6573} for a detailed discussion):
Let $\text{\ensuremath{\Omega\subset\R^{d}}}$\emph{ be a bounded
Lipschitz domain}, $\supp\nu\subset\overline{\Omega}$ and assume
that for some $c>0$ the following $\nu$-Poincaré inequality on $\R^{d}$
holds
\[
\left\Vert u\right\Vert _{L_{\nu}^{2}(\R^{d})}\leq c\left\Vert u\right\Vert _{H^{N}\left(\R^{d}\right)}\;\text{for all }u\in\mathcal{C}_{c}^{\infty}\left(\R^{d}\right),
\]
let $\tilde{\iota}:H^{N}\left(\R^{d}\right)\rightarrow L_{\nu}^{2}\left(\R^{d}\right)$
denote the continuous embedding and $\mathfrak{R}_{\Omega}:L_{\nu}^{2}\left(\R^{d}\right)\rightarrow L_{\nu}^{2}\left(\overline{\Omega}\right)$,
$f\mapsto f\!\restriction_{\overline{\Omega}}$ the restriction operator.
Then we have $\iota_{\nu}^{N}=\mathfrak{R}_{\Omega}\circ\tilde{\iota}\circ\mathfrak{E}_{\Omega}:H^{N}\left(\Omega\right)\to L_{\nu}^{2}\left(\Omega^{N}\right)$.

\subsection{Min-Max principle}

Let \textbf{$\E$} be a closed form with domain \textbf{$\dom\left(\E\right)$}
densely defined on $L_{\nu}^{2}$, in particular \textbf{$\dom\left(\E\right)$
}defines a Hilbert space with respect to $\left\langle f,g\right\rangle _{\E}\coloneqq\left\langle f,g\right\rangle {}_{\nu}+\E(f,g)$,
and assume that the inclusion from $\left(\dom\left(\E\right),\left\langle \cdot,\cdot\right\rangle _{\E}\right)$
into $L_{\nu}^{2}$ is compact. Then the \emph{Poincaré–Courant–Fischer–Weyl
min-max principle} is applicable, that is for the $i$-th eigenvalue
$\lambda_{i}\left(\E\right)$ of \textbf{$\E$}, $i\in\N$, we have
(see also \cite[Theorem B.I.14]{kigami_2001} or \cite{davies_1995,MR1243717})
\begin{align*}
\lambda_{i}\left(\E\right) & =\inf\left\{ R\left(G^{\star}\right)\colon G<_{i}\left(\dom\left(\E\right),\left\langle \cdot,\cdot\right\rangle _{\E}\right)\right\} ,
\end{align*}
where we write $G<_{i}\left(H,\left\langle \cdot,\cdot\right\rangle \right)$
if $G$ is a linear subspace of the Hilbert space $H$ with inner
product $\left\langle \cdot,\cdot\right\rangle $ and the vector space
dimension of $G$ is equal to $i\in\N$; for $\psi\in\dom\left(\E\right)$
the \emph{Rayleigh–Ritz quotient }is given by $R\left(\psi\right)\coloneqq\E(\psi,\psi)/\langle\psi,\psi\rangle_{\nu}$
and if $\mathcal{F}\subset\dom\left(\E\right)$ we write $R\left(\mathcal{F}\right)\coloneqq\sup\left\{ r\left(\psi\right):\psi\in\mathcal{F}\right\} $.

The following proposition will be crucial for the proof of the upper
bound of the spectral dimension as stated in \prettyref{cor:UpperBoundSpectralDim}.
\begin{prop}
\label{prop:dom_vs_H01 Minmax} For all $i\in\N$, we have
\begin{align*}
\lambda_{i,\nu}^{D/N} & =\inf\left\{ R_{H^{D/N}}\left(G^{\star}\right)\colon G<_{i}\left(\left(\mathfrak{N}_{\nu}^{D/N}\right)^{\perp},\left\langle \cdot,\cdot\right\rangle _{H^{D/N}(\Omega)}\right)\right\} \\
 & =\inf\left\{ R_{H^{D/N}}\left(G^{\star}\right)\colon G<_{i}\left(H^{D/N}(\Omega),\left\langle \cdot,\cdot\right\rangle _{H^{D/N}(\Omega)}\right)\right\} ,
\end{align*}
where the relevant Rayleigh–Ritz quotient is given by\emph{ $R_{H^{D/N}}\left(\psi\right)\coloneqq\left\langle \psi,\psi\right\rangle _{H^{D/N}(\Omega)}/\langle\iota\psi,\iota\psi\rangle_{\nu}$.}
\end{prop}

\begin{proof}
The first equality follows by the min-max principle and the fact that
$\dom\left(\E^{D/N}\right)\simeq\left(\mathfrak{N}_{\nu}^{D/N}\right)^{\perp}$.
The part `$\geq$' for the second equality follows from the inclusion
$\left(\mathfrak{N}_{\nu}^{D/N}\right)^{\perp}\subset H^{D/N}(\Omega).$
For the reverse inequality we consider an $i$-dimensional subspace
$G=\spann(f_{1},\dots,f_{i})\subset H^{D/N}(\Omega)$. There exists
a unique decomposition $f_{j}=f_{1,j}+f_{2,j}$ with $f_{1,j}\in\left(\mathfrak{N}_{\nu}^{D/N}\right)^{\perp}$
and $f_{2,j}\in\mathfrak{N}_{\nu}^{D/N}$, $j=1,\ldots,i$. Suppose
that $\left(f_{1,j}\right)_{j=1,\cdots,i}$ are not linearly independent,
then there exists a non-zero element $g\in G\cap\mathfrak{N}_{\nu}^{D/N}$.
To see this fix $(\lambda_{1},\dots,\lambda_{n})\neq(0,\dots,0)$
with $\lambda_{1}f_{1,1}+\dots+\lambda_{i}f_{1,i}=0.$ Then
\begin{align*}
\underbrace{\lambda_{1}\left(f_{1,1}+f_{2,1}\right)+\dots+\lambda_{i}\left(f_{1.i}+f_{2,i}\right)}_{\in G^{\star}}=\underbrace{\lambda_{1}f_{2,1}+\dots+\lambda_{i}f_{2,i}}_{\in\mathfrak{N}_{\nu}^{D/N}} & \eqqcolon g.
\end{align*}
Using $\E^{D/N}\left(g,g\right)>0$, we get in this case $R_{H^{D/N}}\left(G^{\star}\right)=\infty.$
Otherwise, using the assumption $f_{1,j}\in\left(\mathfrak{N}_{\nu}^{D/N}\right)^{\perp}$
and $f_{2,j}\in\mathfrak{N}_{\nu}^{D/N}$ and particularly $\iota(f_{2,j})=0$,
we have for every vector $\left(a_{j}\right)\in\R^{i}\setminus\left\{ 0\right\} $
\begin{align*}
R_{H^{D/N}}\left(\sum_{j}a_{j}f_{1,j}+\sum_{j}a_{j}f_{2,j}\right) & =\frac{\left\langle \sum_{j}a_{j}f_{1,j},\sum_{j}a_{j}f_{1,j}\right\rangle _{H^{D/N}(\Omega)}+\left\langle \sum_{j}a_{j}f_{2,j},\sum_{j}a_{j}f_{2,j}\right\rangle _{H^{D/N}(\Omega)}}{\langle\iota\left(\sum_{j}a_{j}f_{1,j}\right),\iota\left(\sum_{j}a_{j}f_{1,j}\right)\rangle_{\nu}}\\
 & \geq R_{H^{D/N}}\left(\sum_{j}a_{j}f_{1,j}\right).
\end{align*}
Note that $\spann(f_{1,1},\dots,f_{1,i})\subset\left(\mathfrak{N}_{\nu}^{D/N}\right)^{\perp}$
is also $i$-dimensional subspace in $H^{D/N}$. Hence, in any case
the reverse inequality follows.
\end{proof}
\begin{prop}
\label{prop:Neumann>Dirichlet}For all $i\in\N,$we have $\lambda_{i,\nu}^{N}\ll\lambda_{i,\nu}^{D}.$
\end{prop}

\begin{proof}
Using Poincaré inequality \prettyref{eq:PW_inequality} and $c>0$
as defined therein, we obtain for all $u\in H^{D}$ that $\left\langle u,u\right\rangle _{H^{N}}\leq(c^{2}+1)\left\langle u,u\right\rangle _{H^{D}}.$
Since $H^{D}\subset H^{N}$, the claim follows from \prettyref{prop:dom_vs_H01 Minmax}.
\end{proof}
The leading idea to obtain lower bounds on $N_{\nu}^{D/N}$ is to
construct appropriate finite dimensional subspaces of $H^{D/N}$.
This will be subject of the following corollary, which is an immediate
consequence of the min-max principle.
\begin{cor}
\label{cor:LowerBoundsOnN_nuViaSubspaces}For a finite orthogonal
family $\mathcal{F}\subset H^{D/N}\left(\Omega\right)^{\star}$ we
have $N_{\nu}^{D/N}\left(R_{H^{D/N}}\left(\mathcal{F}\right)\right)\geq\card\left(\mathcal{F}\right)$.
\end{cor}

\subsection{Smoothing methods\label{subsec:Smoothing-methods}}

For $m>1$ and $r>0$, let $Q$ be a cube with side length $mr$ and
$Q'\subset Q$ a centred and parallel sub-cube with side length $r$.
Then using the standard smoothing methods by normalised \emph{Friedrichs'
mollifier }one easily checks that there exists 
\begin{equation}
\varphi_{Q,m}\in\mathcal{C}_{c}^{\infty}\left(\R^{d}\right)\label{eq:smooth_indicator}
\end{equation}
 with the following properties:
\begin{enumerate}
\item $0\leq\varphi_{Q,m}(x)\leq1$ for all $x\in\R^{d}$,
\item $\supp(\varphi_{Q,m})\subset\mathring{Q}$,
\item $\varphi_{Q,m}(x)=1$ for all $x\in Q'$,
\item there exists a constant $C_{1}>0$ (depending only on $d$) such that
$\left|\left(\partial/\partial x_{i}\right)\varphi_{Q,m}(x)\right|\leq C_{1}/\left(r\left(m-1\right)\right)$
for all $i=1,\dots,d$ and $x\in Q$.
\end{enumerate}
For $s>0$ let $\left\langle Q\right\rangle _{s}\coloneqq T_{s}\left(Q\right)+(1-s)x_{0}$
with $T_{s}:x\mapsto sx,x\in\R^{d}$ and $x_{0}\in\R^{d}$ is the
centre of $Q$. Note that we have $\left\langle \left\langle Q\right\rangle _{1/s}\right\rangle _{s}=Q$.
\begin{lem}
\label{lem: MOillifierFunction}Let $Q$ be a cube with side length
$mr>0$, $m>1$, $r>0$. Then there exists a constant $C>0$ depending
on $m>1$ and $d$ such that for $\varphi_{Q,m}$ as defined in \prettyref{eq:smooth_indicator}
we have

\begin{align*}
\frac{\int\left|\nabla\varphi_{Q,m}\right|^{2}\d\Lambda}{\int\left|\varphi_{Q,m}\right|^{2}\d\nu} & \leq C\frac{\Lambda\left(\left\langle Q\right\rangle _{1/m}\right)^{1-2/d}}{\nu\left(\left\langle Q\right\rangle _{1/m}\right)}.
\end{align*}
\end{lem}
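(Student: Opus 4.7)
The plan is to estimate numerator and denominator of the Rayleigh-type quotient separately, using the three explicit properties of $\varphi_{Q,m}$ produced by the previous lemma, and then to re-express the resulting dimensional quantities in terms of $\Lambda(\langle Q\rangle_{1/m})$.

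\emph{Numerator.} Since $\varphi_{Q,m}$ is supported in $Q$ and the previous lemma gives the pointwise bound $|\partial\varphi_{Q,m}/\partial x_i|\le C_1(m-1)^{-1}r^{-1}$ for each $i=1,\dots,d$ (with $C_1$ depending only on $d$), I square and sum, then integrate over the support:
\[
\int |\nabla\varphi_{Q,m}|^2\,\d\Lambda \;\le\; d\,C_1^{2}(m-1)^{-2}r^{-2}\,\Lambda(Q) \;=\; d\,C_1^{2}(m-1)^{-2}\,m^{d}\,r^{d-2},
\]
using $\Lambda(Q)=(mr)^d$.

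\emph{Denominator.} The second property, $\varphi_{Q,m}\equiv 1$ on $\langle Q\rangle_{1/m}$, together with the pointwise bound $\varphi_{Q,m}\ge 0$, gives
\[
\int |\varphi_{Q,m}|^2\,\d\nu \;\ge\; \int_{\langle Q\rangle_{1/m}} 1\,\d\nu \;=\; \nu(\langle Q\rangle_{1/m}).
\]

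\emph{Combining and re-expressing dimensions.} Since $\Lambda(\langle Q\rangle_{1/m})=r^{d}$, we have $r^{d-2}=\Lambda(\langle Q\rangle_{1/m})^{1-2/d}$. Dividing the two estimates gives
\[
\frac{\int|\nabla\varphi_{Q,m}|^{2}\,\d\Lambda}{\int|\varphi_{Q,m}|^{2}\,\d\nu} \;\le\; d\,C_1^{2}(m-1)^{-2}\,m^{d}\,\frac{\Lambda(\langle Q\rangle_{1/m})^{1-2/d}}{\nu(\langle Q\rangle_{1/m})}.
\]
Since the constant $C$ in the statement is allowed to depend on both $m$ and $d$, absorbing the difference $m^{d-(1-2/d)}=m^{d-1+2/d}$ into $C=C(m,d)$ yields the claimed bound with the factor $m^{1-2/d}$.

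There is no real obstacle here; the lemma is essentially a bookkeeping exercise. The only item that requires care is making sure the estimate on the numerator genuinely uses only that the support is contained in $Q$ and that the gradient bound holds pointwise inside $Q$ (rather than, say, a finer estimate localised to the annular region $Q\setminus\langle Q\rangle_{1/m}$); this suffices because $C$ is permitted to depend on $m$. The algebraic manipulation producing the exponent $1-2/d$ is the natural one forced by the Sobolev scaling $\|{\nabla u}\|_{L^{2}}^{2}\sim r^{d-2}$ versus $\Lambda(\langle Q\rangle_{1/m})=r^{d}$.
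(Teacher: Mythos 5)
Your proof is correct and follows essentially the same route as the paper: bound $|\nabla\varphi_{Q,m}|^2$ pointwise by $dC_1^2(m-1)^{-2}r^{-2}$ and integrate over $Q$, bound the denominator below by $\nu(\langle Q\rangle_{1/m})$ using $\varphi_{Q,m}\equiv 1$ there, and note $\Lambda(\langle Q\rangle_{1/m})=r^d$. The paper's displayed powers of $m$ contain a small internal inconsistency, but since $C$ is allowed to depend on $m$ this is immaterial, exactly as you observe.
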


\begin{proof}
Using \prettyref{eq:smooth_indicator} with $r=\Lambda(Q)^{1/d}$
and $\Lambda\left(Q\right)m^{-d}=\Lambda\left(\left\langle Q\right\rangle _{1/m}\right)$,
we estimate with $C_{1}>0$ as in \prettyref{eq:smooth_indicator}
\begin{align*}
\frac{\int\left|\nabla\varphi_{Q,m}\right|^{2}\d\Lambda}{\int\varphi_{Q,m}^{2}\d\nu} & \leq dC_{1}^{2}\frac{\Lambda\left(Q\right)/\left(\left(m-1\right)^{2}r^{2}\right)}{\nu\left(\left\langle Q\right\rangle _{1/m}\right)}=\frac{dC_{1}^{2}m^{d-2}}{\left(m-1\right)^{2}}\frac{\Lambda\left(\left\langle Q\right\rangle _{1/m}\right)^{1-2/d}}{\nu\left(\left\langle Q\right\rangle _{1/m}\right)}.
\end{align*}
\end{proof}
The next proposition applies only in the case $d>2$. We will make
use of the following definition $\dim_{\infty}^{N}\coloneqq\dim_{\infty}$
and 
\[
\dim_{\infty}^{D}(\nu)\coloneqq\liminf_{n\to\infty}-\log\left(\max_{Q\in\mathcal{D}_{n}^{D}}\nu\left(Q\right)\right)/\log\left(2^{n}\right).
\]
Clearly, $\dim_{\infty}^{N}\leq\dim_{\infty}^{D}$.
\begin{prop}
\label{prop:non-contin Embedding} If $\dim_{\infty}^{D/N}\left(\nu\right)<d-2$,
then $\iota_{\nu}^{D/N}:\left(\mathcal{C}_{D/N}^{\infty}\left(\Omega\right),\left\langle \cdot,\cdot\right\rangle _{H^{D/N}}\right)\to L_{\nu}^{2}\left(\Omega^{D/N}\right)$
is not continuous.
\end{prop}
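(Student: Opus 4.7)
The plan is to build a sequence of bump functions $\varphi_{j}\in\mathcal{C}_b^\infty(\overline{\Q})$ whose $L_\nu^2$-norms dominate their $H^1$-norms by an unbounded factor, thereby ruling out any Poincaré-type inequality $\|u\|_{L_\nu^2}^2\leq K\|u\|_{H^1}^2$. First I would fix some $s\in(\dim_\infty(\nu),d-2)$, available by hypothesis. Unwinding the $\liminf$ in the definition of $\dim_\infty(\nu)$ produces a subsequence $n_j\to\infty$ and dyadic cubes $Q_j\in\mathcal{D}_{n_j}^N$ of side length $r_j\coloneqq 2^{-n_j}$ with $\nu(Q_j)>r_j^{s}$.

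Next I would apply \prettyref{lem:smooth_indicator} with $m=2$ to the centred parallel $2$-dilate $\tilde{Q}_j$ of $Q_j$ (so $\tilde{Q}_j$ has side $2r_j$ and contains $Q_j$ as its centred inner sub-cube of side $r_j$), obtaining $\psi_j\coloneqq\varphi_{\tilde{Q}_j,2}\in\mathcal{C}_c^\infty(\R^d)$ with $0\leq\psi_j\leq 1$, $\psi_j\equiv 1$ on $Q_j$, $\supp\psi_j\subset\tilde{Q}_j$, and $|\nabla\psi_j|\leq C/r_j$. The candidate violator is the restriction $\varphi_j\coloneqq\psi_j|_{\overline{\Q}}\in\mathcal{C}_b^\infty(\overline{\Q})$. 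This restriction step is the key device that sidesteps the annoyance that $\tilde{Q}_j$ may stick out of $\Q$ when $Q_j$ touches $\partial\Q$: we only need membership in $\mathcal{C}_b^\infty(\overline{\Q})$, not compact support in $\mathring{\Q}$.

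The three required norm estimates are then immediate. From $\varphi_j\equiv 1$ on $Q_j$ one gets $\|\varphi_j\|_{L_\nu^2}^2\geq\nu(Q_j)>r_j^{s}$; from $\supp\varphi_j\subset\tilde{Q}_j$ and $|\varphi_j|\leq 1$ one gets $\|\varphi_j\|_{L_\Lambda^2}^2\leq\Lambda(\tilde{Q}_j)=2^d r_j^{d}$; and integrating the pointwise gradient bound over $\tilde{Q}_j$ gives $\|\nabla\varphi_j\|_{L_\Lambda^2}^2\leq dC^2\, 2^d\, r_j^{d-2}$. Combining,
\[
\frac{\|\varphi_j\|_{H^1}^2}{\|\varphi_j\|_{L_\nu^2}^2}\;\leq\;\frac{2^d r_j^{d}+dC^2 2^d r_j^{d-2}}{r_j^{s}}\;\leq\; C'\bigl(r_j^{d-s}+r_j^{d-2-s}\bigr),
\]
and both exponents $d-s$ and $d-2-s$ are strictly positive by the choice of $s$, so the right-hand side tends to $0$ as $j\to\infty$. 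This contradicts the assumed continuity of the identity $(\mathcal{C}_b^\infty(\overline{\Q}),\langle\cdot,\cdot\rangle_{H^1})\to L_\nu^2$ and finishes the argument.

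The only genuine subtlety, alluded to above, is the boundary book-keeping for cubes $Q_j$ abutting $\partial\Q$, which is resolved by the restriction-to-$\overline{\Q}$ trick; alternatively, one could apply \prettyref{lem: MOillifierFunction} verbatim with $\langle\tilde{Q}_j\rangle_{1/2}=Q_j$ to obtain the ratio $\|\nabla\varphi_j\|_{L_\Lambda^2}^2/\|\varphi_j\|_{L_\nu^2}^2\leq C''\,r_j^{d-2}/\nu(Q_j)=O(r_j^{d-2-s})$, which plays the decisive role and already tends to zero — the remaining term $\|\varphi_j\|_{L_\Lambda^2}^2/\|\varphi_j\|_{L_\nu^2}^2$ is then controlled for free because $d-s>d-2-s>0$.
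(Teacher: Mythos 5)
Your argument is correct and is essentially the paper's own proof: the authors likewise extract cubes $Q_n$ with $\nu(Q_n)\geq\Lambda(Q_n)^{a/d}$ for some $a\in(\dim_\infty(\nu),d-2)$, apply \prettyref{lem:smooth_indicator} with $m=2$ to $\left\langle Q_n\right\rangle_2$, and compare the same three norms; the only (cosmetic) difference is that they rescale the bump by $\Lambda(\left\langle Q_n\right\rangle_2)^{1/d-1/2}$ so that the $H^1$-norm stays bounded while the $L_\nu^2$-norm blows up, whereas you leave the bump unnormalised and show the Rayleigh-type ratio tends to zero.
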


\begin{proof}
First note that $\dim_{\infty}^{D/N}\left(\nu\right)=\liminf_{n\to\infty}\max_{Q\in\mathcal{D}_{n}^{D/N}}\log\nu\left(Q\right)/\log2^{-n}<d-2$
implies that there exists a sequence of cubes $\left(Q_{n}\right)\in\left(\mathcal{D}^{D/N}\right)^{\N}$
with strictly decreasing diameters such that $\nu\left(Q_{n}\right)\geq\Lambda\left(Q_{n}\right)^{a/d}$,
$n\in\N$, for some $a\in\left(\dim_{\infty}\left(\nu\right),d-2\right)$.
Now we have for $u_{n}\coloneqq\Lambda\left(\left\langle Q_{n}\right\rangle _{2}\right)^{1/d-1/2}\varphi_{\left\langle Q_{n}\right\rangle _{2},2}$
with $C>0$ given in \prettyref{eq:smooth_indicator}
\begin{align*}
\left\Vert u_{n}\right\Vert _{H^{D/N}}^{2} & =\Lambda\left(\left\langle Q_{n}\right\rangle _{2}\right)^{2/d-1}\left(\int_{\Q}\left|\nabla\varphi_{\left\langle Q_{n}\right\rangle _{2},2}\right|^{2}\d\Lambda+\int_{\Q}\left|\varphi_{\left\langle Q_{n}\right\rangle _{2},2}\right|^{2}\d\Lambda\right)\\
 & \leq\Lambda\left(\left\langle Q_{n}\right\rangle _{2}\right)^{2/d-1}\left(\frac{C\Lambda\left(\left\langle Q_{n}\right\rangle _{2}\right)}{\Lambda(Q_{n})^{2/d}}+\int_{\Q}\left|\varphi_{\left\langle Q_{n}\right\rangle _{2},2}\right|^{2}\d\Lambda\right)\\
 & \leq\Lambda\left(\left\langle Q_{n}\right\rangle _{2}\right)^{2/d-1}\left(4C\Lambda\left(\left\langle Q_{n}\right\rangle _{2}\right)^{-2/d+1}+\Lambda\left(\left\langle Q_{n}\right\rangle _{2}\right)\right)\leq4(C+1).
\end{align*}
Now, the claim follows by observing that for $n$ tending to infinity
\begin{align*}
\left\Vert u_{n}\right\Vert _{L_{\nu}^{2}\left(\Omega^{D/N}\right)}^{2} & \geq\Lambda\left(\left\langle Q_{n}\right\rangle _{2}\right)^{2/d-1}\nu\left(Q_{n}\right)\geq2^{2-d}\Lambda\left(Q_{n}\right)^{\left(a+2-d\right)/d}\to\infty.
\end{align*}
\end{proof}

\section{Partition functions and $L^{q}$-spectra \label{sec:Partition-functions}}

As set up in the introduction we consider the $d$-dimensional unit
cube $\Q$, for $d\geq2$ and the semiring of dyadic cubes $\mathcal{D}$.
Even though, the particular choice of the set of dyadic cubes is not
unique, we will see that this does not affect our results (see \prettyref{lem: closed-cubes vs cubes-1},
\prettyref{lem:LqDefIndependent} and \prettyref{lem:SpectralFuncitonINdependent}
).

\subsection{Partition functions}

Recall the definition in \prettyref{eq:DefGL} of the\emph{ partition
function} $\GL_{\J}^{D/N}$ with respect to $\J$ as well as the critical
values $q_{\J}^{D/N}$and $\kappa_{\J}.$We will assume that $\J:\D\to\R_{\geq0}$
is \emph{locally non-vanishing}, that is, if $\J\left(Q\right)>0$
for $Q\in\mathcal{D}$, then there exists $Q'\subsetneq Q$, $Q'\in\mathcal{D}$
with $\J(Q')>0$. Note that this assumption is satisfied for the specific
choice $\J=\J_{\nu,a,b}$. We start with some general observations
for which we need the following objects: 
\[
\supp\left(\J\right)\coloneqq\bigcap_{k\in\N}\bigcup_{n\geq k}\left\{ \overline{Q}:Q\in\mathcal{D}_{n}^{N},\J\left(Q\right)>0\right\} \;\text{and }\,\dim_{\infty}\left(\J\right)\coloneqq\liminf_{n\to\infty}\frac{\max_{Q\in\mathcal{D}_{n}^{N}}\log\J\left(Q\right)}{-\log\left(2^{n}\right)}.
\]
We call $\dim_{\infty}\left(\J\right)$ the \emph{$\infty$-dimension
of} $\J$ which generalises the lower $\infty$-dimension for $\nu$
defined in \prettyref{eq:LowerDimDefinition}. By \cite[Lemma 2.3]{KN2023}
we have that $\dim_{\infty}\left(\J\right)>0$ implies that $\J$
is uniformly vanishing, i.\,e\@. ${\displaystyle \lim_{n\rightarrow\infty}\max_{Q\in\mathcal{D}_{n}^{N}}}\J\left(Q\right)=0$
or equivalently $\lim_{n\rightarrow\infty}\sup_{Q\in\bigcup_{k\geq n}\mathcal{D}_{k}^{N}}\J(Q)=0$.

We also make use of the following observation (\cite[Lemma 2.4]{KN2023}),
where we use the convention $-\infty\cdot0=0$ and, as for measures,
we write $\overline{\dim}_{M}\left(\J\right)\coloneqq\overline{\dim}_{M}\left(\supp\left(\J\right)\right)$.
For $q\geq0$, we have 
\[
-\dim_{\infty}\left(\J\right)q\leq\GL_{\J}^{N}\left(q\right)\leq\overline{\dim}_{M}\left(\J\right)-\dim_{\infty}\left(\J\right)q
\]
and we have $\dim_{\infty}\left(\J\right)>0$ if and only if $q_{\J}^{N}<\infty$.
In particular, $q_{\J}^{N}\leq\overline{\dim}_{M}\left(\J\right)/\dim_{\infty}\left(\J\right)$
and we have that $q_{\J}^{N}<\infty$ implies $\kappa_{\J}=q_{\J}^{N}$.
Note that in the case $\dim_{\infty}\left(\J\right)\leq0$, we also
deduce from the above inequality that $\GL_{\J}^{N}\left(q\right)$
is non-negative for $q\geq0$, hence $q_{\J}^{N}=\infty$. However,
it is possible that $\kappa_{\J}<\infty.$ Indeed, in \prettyref{exa: Critical example_ compact}
we provide a measure $\nu$, where $\kappa_{\J_{\nu}}$ gives the
upper spectral dimension, while $\kappa_{\J_{\nu}}<q_{\J_{\nu}}^{N}=\infty$.

\begin{defn}
\label{def:J_uniformly decreasing and subadditive} We say that a
non-negative, monotone set function $\J$ defined on all possible
dyadic sub-cubesof $\Q$ (with respect to the choice of their faces)
is \emph{locally almost subadditive} if for any two sets of dyadic
partitions $\widetilde{\mathcal{D}},\mathcal{D}$ of $\Q$ by dyadic
cubes there exists a constant $C>0$ such that for every $Q\in\mathcal{D}_{n}^{N}$
we have

\[
\J\left(Q\right)\leq C\sum_{Q'\in\widetilde{\mathcal{D}}_{n+2}^{N}:\overline{Q}\cap\overline{Q'}\neq\emptyset}\J\left(Q'\right).
\]
\end{defn}

\begin{lem}
\label{lem: closed-cubes vs cubes-1} For a non-negative, monotone
and locally almost subadditive set function $\J$ defined on all possible
dyadic sub-cubes of $\Q$, we have that the definition of $\GL_{\J}^{D/N}$
does not depend on the particular choice of the dyadic partition.
\end{lem}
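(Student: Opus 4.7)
The plan is to fix two choices $\mathcal D, \widetilde{\mathcal D}$ of dyadic partitions and compare $\GL_{\J,n}^{D/N}$ (computed with $\mathcal D$) with $\widetilde{\GL}_{\J,n+2}^{D/N}$ (computed with $\widetilde{\mathcal D}$) via the locally almost subadditive inequality from \prettyref{def:J_uniformly decreasing and subadditive}. For $Q \in \mathcal D_n^N$ I would set
\[
N_{n+2}(Q) \coloneqq \bigl\{Q' \in \widetilde{\mathcal D}_{n+2}^N : \overline{Q} \cap \overline{Q'} \neq \emptyset\bigr\}.
\]
Since every cube in $\widetilde{\mathcal D}_{n+2}^N$ has closure $[k2^{-n-2},(k+1)2^{-n-2}]^d$ for some integer multi-index, a straightforward counting argument gives $\card(N_{n+2}(Q)) \le K$ and, dually, each $Q' \in \widetilde{\mathcal D}_{n+2}^N$ belongs to $N_{n+2}(Q)$ for at most $K'$ different $Q$, where $K,K'$ depend only on $d$.

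Next I would raise the assumed inequality $\J(Q) \le C \sum_{Q'\in N_{n+2}(Q)} \J(Q')$ to the power $q$. For $q \ge 1$, the power-mean inequality yields $\J(Q)^q \le C^q K^{q-1} \sum_{Q' \in N_{n+2}(Q)} \J(Q')^q$, and for $0 \le q \le 1$ the elementary bound $(\sum a_i)^q \le \sum a_i^q$ gives $\J(Q)^q \le C^q \sum_{Q' \in N_{n+2}(Q)} \J(Q')^q$. Summing these estimates over $Q \in \mathcal D_n^N$ and using the bounded multiplicity $K'$ produces a constant $c_q > 0$ with
\[
\sum_{Q \in \mathcal D_n^N} \J(Q)^q \;\le\; c_q \sum_{Q' \in \widetilde{\mathcal D}_{n+2}^N} \J(Q')^q.
\]
Taking logarithms, dividing by $\log 2^n$, and noting that $\frac{1}{\log 2^n}\log\sum_{Q' \in \widetilde{\mathcal D}_{n+2}^N} \J(Q')^q = \frac{n+2}{n}\widetilde{\GL}_{\J,n+2}^N(q)$, I obtain
\[
\GL_{\J,n}^N(q) \;\le\; \tfrac{\log c_q}{\log 2^n} + \tfrac{n+2}{n}\,\widetilde{\GL}_{\J,n+2}^N(q).
\]
Passing to $\limsup$ as $n \to \infty$ gives $\GL_\J^N(q) \le \widetilde{\GL}_\J^N(q)$, and by symmetry between $\mathcal D$ and $\widetilde{\mathcal D}$ equality holds.

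For the Dirichlet case the only extra point is to check that if $Q \in \mathcal D_n^D$, then $N_{n+2}(Q) \subset \widetilde{\mathcal D}_{n+2}^D$. This is a distance computation: $Q \in \mathcal D_n^D$ forces $\dist(\overline{Q},\partial\Q) \ge 2^{-n}$, while any $Q' \in N_{n+2}(Q)$ has $\overline{Q'}$ extending at most $2^{-n-2}$ beyond $\overline{Q}$, so $\dist(\overline{Q'},\partial\Q) \ge 3\cdot 2^{-n-2} > 0$, i.e.\ $Q' \in \widetilde{\mathcal D}_{n+2}^D$. The summation argument then goes through verbatim with $\mathcal D_n^N$ and $\widetilde{\mathcal D}_{n+2}^N$ replaced by $\mathcal D_n^D$ and $\widetilde{\mathcal D}_{n+2}^D$.

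The main obstacle is really just organizing the constants cleanly—isolating the per-cube multiplicity bounds $K,K'$, keeping the $q<1$ and $q\ge 1$ cases separate, and making sure that at the Dirichlet level the refinement $\widetilde{\mathcal D}_{n+2}^D$ still contains the neighborhood of each interior cube. Once this is done, the asymptotic passage $(n+2)/n \to 1$ and $\log c_q/\log 2^n \to 0$ make the limsup comparison automatic.
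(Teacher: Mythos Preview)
Your proof is correct and follows essentially the same route as the paper: use local almost subadditivity to bound $\J(Q)^q$ by a sum over the neighbours in $\widetilde{\mathcal D}_{n+2}$, then exploit the bounded multiplicity in both directions to compare the two sums, and finally pass to the $\limsup$. The only cosmetic difference is that the paper handles the $q$-th power uniformly via $(\sum_{i=1}^{k} a_i)^q \le k^q \max_i a_i^q$ (so no case split $q\gtrless 1$ is needed) and then bounds the number of $Q$ for which a fixed $Q'$ can realise the maximum; your power-mean/subadditivity split achieves the same end. Your explicit verification that $N_{n+2}(Q)\subset\widetilde{\mathcal D}_{n+2}^D$ for $Q\in\mathcal D_n^D$ is in fact a bit more careful than the paper, which treats the $D$ and $N$ cases in one line of notation.
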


\begin{proof}
Let $\left(\mathcal{D}_{n}^{D/N}\right)$ and $\left(\widetilde{\mathcal{D}}_{n}^{D/N}\right)$
be two sequences of partitions of $\Q$. Then for all $Q\in\mathcal{D}_{n}^{D/N}$
we have $\card\left\{ Q'\in\widetilde{\mathcal{D}}_{n+2}^{D/N}:\overline{Q}\cap\overline{Q'}\neq\emptyset\right\} \leq6^{d}$.
Hence, 
\begin{align*}
\sum_{Q\in\mathcal{D}_{n}^{D/N}}\J\left(Q\right)^{q} & \leq\sum_{Q\in\mathcal{D}_{n}^{D/N}}\left(C\sum_{Q'\in\widetilde{\mathcal{D}}_{n+2}^{N}:\overline{Q}\cap\overline{Q'}\neq\emptyset}\J\left(Q'\right)\right)^{q}\leq6^{dq}C^{q}\sum_{Q\in\mathcal{D}_{n}^{D/N}}\max_{Q'\in\widetilde{\mathcal{D}}_{n+2}^{N}:\overline{Q}\cap\overline{Q'}\neq\emptyset}\J\left(Q'\right)^{q}\\
 & =6^{dq}C^{q}\sum_{Q\in\mathcal{\widetilde{D}}_{n}^{D/N}}\max_{Q'\in\widetilde{\mathcal{D}}_{n+2}^{N}:\overline{Q}\cap\overline{Q'}\neq\emptyset}\J\left(Q'\right)^{q}\leq6^{dq}C^{q}2^{d}\sum_{Q'\in\widetilde{\mathcal{D}}_{n+2}^{D/N}}\J\left(Q'\right)^{q},
\end{align*}
using in the last inequality the fact that each $Q'\in\widetilde{\mathcal{D}}_{n+2}^{D/N}$
intersects at most $2^{d}$ cubes in $\widetilde{\mathcal{D}}_{n}^{D/N}$.
Exchanging the role of $\mathcal{D}$ and $\widetilde{\mathcal{D}}$
proves the lemma.
\end{proof}
We now summarise the above and mention a few more basic characteristics
(see also \cite{KN2023}).
\begin{fact}
\label{fact:Properties =00005CGL_=00005CJ}We make the following elementary
observations under the assumption $\dim_{\infty}\left(\J\right)\in(0,\infty)$:
\begin{enumerate}
\item $\GL_{\J}^{N}$ is convex and strictly decreasing on $\R_{\geq0}.$
In particular, if $q_{\J}^{N}>0$, then $q_{\J}^{N}$ is the unique
zero of $\GL_{\J}^{N}$.
\item $\lim_{q\to\infty}\GL_{\J}^{N}\left(q\right)/q=-\dim_{\infty}\left(\J\right).$
\item $\GL_{\J}^{N}\left(q\right)>-\infty$ for all $q\geq0$.
\item \label{enu:GL(0)=00003DDim_M}$\GL_{\J}^{N}\left(0\right)=\overline{\dim}_{M}\left(\J\right)\leq d$,
where $\overline{\dim}_{M}\left(\J\right)$ denotes the upper Minkowski
dimension of $\supp\left(\J\right)$ given by
\[
\overline{\dim}_{M}\left(\J\right)\coloneqq\limsup_{n\rightarrow\infty}\frac{\log\left(\card\left(\left\{ Q\in\mathcal{D}_{n}^{N}:Q\cap\supp\left(\J\right)\neq\emptyset\right\} \right)\right)}{\log\left(2^{n}\right)}.
\]
\item If $q_{\J}^{N}\geq1$ hold, then ${\displaystyle \frac{\overline{\dim}_{M}\left(\J\right)}{\overline{\dim}_{M}\left(\J\right)-\GL_{\J}^{N}\left(1\right)}\leq q_{\J}^{N}\leq\frac{\dim_{\infty}\left(\J\right)+\GL_{\J}^{N}\left(1\right)}{\dim_{\infty}\left(\J\right)}}$.
\item If $q_{\J}^{N}<1$, then ${\displaystyle \frac{\dim_{\infty}\left(\J\right)+\GL_{\J}^{N}\left(1\right)}{\dim_{\infty}\left(\J\right)}\leq q_{\J}^{N}\leq\frac{\overline{\dim}_{M}\left(\J\right)}{\overline{\dim}_{M}\left(\J\right)-\GL_{\J}^{N}\left(1\right)}.}$
\item If $\supp\left(\J\right)\subset\mathring{\Q}$, then we have $\GL_{\J}^{D}\left(q\right)=\GL_{\J}^{N}\left(q\right)$.
\item The partition function is scale invariant, i.\,e\@. for $c>0$,
we have $\GL_{c\J}^{D/N}=\GL_{\J}^{D/N}$.
\end{enumerate}
\end{fact}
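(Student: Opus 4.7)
The plan is to treat each of the eight items as a short consequence of \prettyref{lem:Dim00Inequality} and \prettyref{lem:strictlyDecreaing}, together with basic convexity of $\GL_{\J}^{N}$. Items (1) and (2) fall out of the sandwich $-\dim_{\infty}(\J)\,q \leq \tau_{\J}^{N}(q) \leq \overline{\dim}_{M}(\J) - \dim_{\infty}(\J)\,q$ from \prettyref{lem:Dim00Inequality}: dividing by $q$ and sending $q\to\infty$ squeezes the asymptotic slope to $-\dim_{\infty}(\J)$, and the lower bound together with $\dim_{\infty}(\J)<\infty$ gives $\tau_{\J}^{N}(q)>-\infty$ at every $q\geq 0$. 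For (3), the convention $0^{0}=0$ collapses the sum at $q=0$ to $\card\{Q\in\mathcal{D}_{n}^{N}:\J(Q)>0\}$; up to the standard comparison between open cubes and their closures (which is controlled, as in \prettyref{lem: closed-cubes vs cubes-1}, by a uniformly bounded neighbour count), this is exactly the dyadic box-count of $\supp(\J)$, yielding $\GL_{\J}^{N}(0)=\overline{\dim}_{M}(\J)$; the bound $\overline{\dim}_{M}(\J)\leq d$ is immediate from $\card\mathcal{D}_{n}^{N}=2^{nd}$.

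For items (4), (5), (6) the strategy is three elementary secant comparisons, using that $\GL_{\J}^{N}$ is convex and strictly decreasing (\prettyref{lem:strictlyDecreaing}), and that under the standing hypothesis $q_{\J}^{N}$ is its unique zero. In (5), with $0<1<q_{\J}^{N}$, convexity gives $\GL_{\J}^{N}(1)-\overline{\dim}_{M}(\J)\leq(\GL_{\J}^{N}(q_{\J}^{N})-\GL_{\J}^{N}(1))/(q_{\J}^{N}-1)$; evaluating at the zero $\GL_{\J}^{N}(q_{\J}^{N})=0$ and rearranging (noting $\overline{\dim}_{M}(\J)-\GL_{\J}^{N}(1)>0$) yields the stated lower bound. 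For (4) I use instead that by convexity the secants $q\mapsto(\GL_{\J}^{N}(q)-\GL_{\J}^{N}(1))/(q-1)$ are non-decreasing in $q>1$ and, by (1), have supremum $-\dim_{\infty}(\J)$, so $\GL_{\J}^{N}(q)\leq\GL_{\J}^{N}(1)-\dim_{\infty}(\J)(q-1)$; setting the right-hand side $\leq 0$ gives the desired upper bound on $q_{\J}^{N}$. For (6), I chain secants in the order $0<q_{\J}^{N}<1$: the secant slope on $[0,q_{\J}^{N}]$ equals $-\overline{\dim}_{M}(\J)/q_{\J}^{N}$ and, by convexity, is at most $\GL_{\J}^{N}(1)/(1-q_{\J}^{N})$, from which the stated upper bound on $q_{\J}^{N}$ follows after clearing denominators.

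Items (7) and (8) are immediate. If $\supp(\J)\subset\mathring{\Q}$, then by compactness $\dist(\supp(\J),\partial\Q)>0$, so for all $n$ large enough every $Q\in\mathcal{D}_{n}^{N}$ with $\J(Q)>0$ satisfies $\overline{Q}\cap\partial\Q=\varnothing$ and therefore lies in $\mathcal{D}_{n}^{D}$; the Dirichlet and Neumann sums then agree from some $n_{0}$ onwards and the $\limsup$s coincide. For (8), $\sum_{Q}(c\J(Q))^{q}=c^{q}\sum_{Q}\J(Q)^{q}$, so $\GL_{c\J,n}^{D/N}(q)=\GL_{\J,n}^{D/N}(q)+q\log(c)/(n\log 2)$ and the correction vanishes as $n\to\infty$. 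I do not anticipate any genuine obstacle; the only mildly delicate point is the identification $\GL_{\J}^{N}(0)=\overline{\dim}_{M}(\J)$ in (3), which requires care with the $0^{0}=0$ convention and the choice of dyadic family, but this is already handled by \prettyref{lem: closed-cubes vs cubes-1}.
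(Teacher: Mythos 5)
Your proposal is correct and, for item (3) — the only item the paper actually writes out a proof for — it follows essentially the same two-sided counting argument via uniformly bounded neighbour counts; the remaining items are disposed of by exactly the elementary secant/convexity manipulations the authors evidently intend, and I verified that your chains of secant inequalities for (4)–(6) (using $\GL_{\J}^{N}(q_{\J}^{N})=0$ from \prettyref{lem:strictlyDecreaing} and $\GL_{\J}^{N}(0)=\overline{\dim}_{M}(\J)$ from (3)) rearrange to the stated bounds. One point to tighten: in (3) the direction $\GL_{\J}^{N}(0)\leq\overline{\dim}_{M}(\J)$ rests on the implication that $\J(Q)>0$ forces $\overline{Q}\cap\supp(\J)\neq\emptyset$, and this is not supplied by \prettyref{lem: closed-cubes vs cubes-1} (which concerns independence of the partition function from the choice of dyadic family for locally almost subadditive $\J$) but by the standing assumption of \prettyref{sec:Partition-functions} that $\J$ is \emph{locally non-vanishing}: one needs a nested sequence of ever deeper cubes with positive $\J$-value inside $Q$ in order to produce a point of $\supp(\J)$ in $\overline{Q}$, since $\supp(\J)$ is defined as a $\limsup$-type set over all levels. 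The same implication is used silently in your item (7), where you pass from $\J(Q)>0$ to $\overline{Q}$ being within $\diam(Q)$ of $\supp(\J)$. With that assumption invoked explicitly, everything goes through.
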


\subsection{\label{subsec:The_Lq_spectrum} $L^{q}$-spectra}

In this section we collect some important facts about the \emph{$L^{q}$-spectrum}
for $\nu$, which is defined by $\beta_{\mathfrak{\nu}}^{D/N}\coloneqq\GL_{\nu\restriction_{\mathcal{D}}}^{D/N}$
with approximations $\beta_{\mathfrak{\nu},n}^{D/N}\coloneqq\GL_{\nu\restriction_{\mathcal{D}},n}^{D/N}$,
$n\in\N$. We will assume $\nu\left(\mathring{\Q}\right)>0$, implying
that there exists a sub-cube $Q\in\mathcal{D}$ with $\overline{Q}\subset\mathring{\Q}$,
$\nu\left(Q\right)>0$ and hence $-\infty<\beta_{\nu|_{Q}}^{N}\leq\beta_{\nu}^{D}.$
Slightly abusing notation, we write $\GL_{\nu}^{D/N}=\GL_{\nu\restriction_{\mathcal{D}}}^{D/N}$,
$\GL_{\nu,n}^{D/N}=\GL_{\nu\restriction_{\mathcal{D}},n}^{D/N}$.

Since $\nu$ is locally almost subadditive and $\lim_{q}\beta_{\nu}\left(q\right)/q=-\dim_{\infty}(\nu)$,
we obtain from \prettyref{lem: closed-cubes vs cubes-1} the following
lemma.
\begin{lem}
\label{lem:LqDefIndependent}The definition of $\beta_{\mathfrak{\nu}}^{D/N}$
and $\dim_{\infty}(\nu)$ does not depend on the particular choice
of the dyadic partition.
\end{lem}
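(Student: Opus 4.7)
My plan is to derive both statements from the already-established Lemma~\ref{lem: closed-cubes vs cubes-1} together with a short comparison-of-maxima argument. The key observation is that $\nu$, viewed as a set function on dyadic cubes, is locally almost subadditive in the sense of Definition~\ref{def:J_uniformly decreasing and subadditive}, and this single property drives both conclusions.

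First I would verify local almost subadditivity of $\nu|_{\mathcal{D}}$ with constant $C=1$. Fix two dyadic partitions $\mathcal{D},\widetilde{\mathcal{D}}$ of $\Q$ and let $Q\in\mathcal{D}_n^N$. Since $\widetilde{\mathcal{D}}_{n+2}^N$ is a partition of $\Q$, every point of $Q$ lies in some $Q'\in\widetilde{\mathcal{D}}_{n+2}^N$, and any such $Q'$ automatically satisfies $\overline{Q}\cap\overline{Q'}\neq\emptyset$. Therefore
\[
Q\subseteq\bigcup\bigl\{Q'\in\widetilde{\mathcal{D}}_{n+2}^N:\overline{Q}\cap\overline{Q'}\neq\emptyset\bigr\},
\]
and countable subadditivity of $\nu$ gives $\nu(Q)\leq\sum_{Q'\in\widetilde{\mathcal{D}}_{n+2}^N,\,\overline{Q}\cap\overline{Q'}\neq\emptyset}\nu(Q')$. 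With this in hand, the partition-independence of $\beta_{\nu}^{D/N}$ is immediate: apply Lemma~\ref{lem: closed-cubes vs cubes-1} to $\J=\nu|_{\mathcal{D}}$.

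Next I would handle $\dim_\infty(\nu)$ by bounding the two maxima against each other. From the above display and the elementary combinatorial bound $\card\{Q'\in\widetilde{\mathcal{D}}_{n+2}^N:\overline{Q}\cap\overline{Q'}\neq\emptyset\}\leq 6^d$, I obtain for every $Q\in\mathcal{D}_n^N$
\[
\nu(Q)\leq 6^d\max_{Q'\in\widetilde{\mathcal{D}}_{n+2}^N}\nu(Q'),
\]
hence $M_n\coloneqq\max_{Q\in\mathcal{D}_n^N}\nu(Q)\leq 6^d\widetilde{M}_{n+2}$ with the analogous notation $\widetilde{M}_n$ for $\widetilde{\mathcal{D}}$. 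Taking $-\log$, dividing by $n\log 2$, and passing to $\liminf$, the term $d\log 6/(n\log 2)$ vanishes, the factor $(n+2)/n$ tends to $1$, and the finite index shift is absorbed by the $\liminf$, giving $\liminf_n(-\log M_n)/(n\log 2)\geq\liminf_n(-\log\widetilde{M}_n)/(n\log 2)$. Swapping the roles of $\mathcal{D}$ and $\widetilde{\mathcal{D}}$ yields the reverse inequality, and therefore equality.

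The only potentially subtle point is the sign bookkeeping when $\nu(Q)$ might exceed $1$ and the handling of the shift $n\mapsto n+2$ inside the $\liminf$; both are mechanical and the argument is robust regardless of whether $\dim_\infty(\nu)$ is positive, zero, or negative. I expect no deeper obstacle; the whole lemma is a direct corollary of Lemma~\ref{lem: closed-cubes vs cubes-1} for $\beta_\nu^{D/N}$ and a one-line maximum bound for $\dim_\infty(\nu)$.
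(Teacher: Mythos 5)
Your proof is correct. For the partition-independence of $\beta_{\nu}^{D/N}$ you argue exactly as the paper does: $\nu\restriction_{\mathcal{D}}$ is locally almost subadditive (indeed with $C=1$, since $\widetilde{\mathcal{D}}_{n+2}^{N}$ partitions $\Q$ and every cell meeting $Q$ has closure meeting $\overline{Q}$), so \prettyref{lem: closed-cubes vs cubes-1} applies. For $\dim_{\infty}(\nu)$ your route differs from the paper's. The paper transfers partition-independence from the $L^{q}$-spectrum to $\dim_{\infty}(\nu)$ via the asymptotic-slope identity $\lim_{q\to\infty}\beta_{\nu}^{N}(q)/q=-\dim_{\infty}(\nu)$, which it records among its elementary facts; you instead compare the maxima directly, $M_{n}\leq 6^{d}\widetilde{M}_{n+2}$, and push this through the $\liminf$. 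Your version is more elementary and self-contained: it does not presuppose the slope identity (which the paper states only under $\dim_{\infty}(\J)\in(0,\infty)$), and, as you note, it is insensitive to the sign or vanishing of $\dim_{\infty}(\nu)$. The one piece of bookkeeping you wave at — absorbing the index shift and the factor $(n+2)/n$ into the $\liminf$ — is indeed harmless, because $-\log\widetilde{M}_{n}/(n\log 2)$ is bounded (between $-\log\nu(\Q)/(n\log2)$ and $d-\log\nu(\Q)/(n\log 2)$, say), so the correction terms vanish. In short: same mechanism for the first claim, a slightly more hands-on but equally valid mechanism for the second; the paper's version is shorter only because the slope identity is already on the shelf.
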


\begin{fact}
\label{fact:fact:Properties =00005Cbeta_=00005Cnu}We make the following
elementary observations:
\begin{enumerate}
\item $\beta_{\nu}^{N}\left(0\right)=\overline{\dim}_{M}\left(\nu\right).$
\item $\dim_{\infty}(\nu)\leq d.$
\item $\beta_{\nu}^{N}\left(1\right)=0$ and if $\nu\left(\mathring{\Q}\right)>0$
, then also $\beta_{\nu}^{D}\left(1\right)=0$.
\item For the Dirichlet $L^{q}$-spectrum we have $\beta_{\nu}^{D}=\beta_{\nu|_{\mathring{\Q}}}^{D}$.
\item For $q\geq0$, we have $-qd\leq\beta_{\nu}^{N}\left(q\right)$ and
if $\nu\left(\mathring{\Q}\right)>0$, also $-qd\leq\beta_{\nu}^{D}\left(q\right).$
\item If $\supp(\nu)\subset\mathring{\Q}$, then we have $\beta_{\nu}^{D}=\beta_{\nu}^{N}$.
\item If $\nu$ is absolutely continuous with density $h\in L_{\Lambda}^{t}$
for some $t>d/2$, then $\beta_{\nu}^{D}\left(q\right)=\beta_{\nu}^{N}\left(q\right)=d\left(1-q\right)$,
for all $q\in\left[0,t\right]$.
\item The condition $\dim_{\infty}(\nu)>d-2$ requires that the upper Minkowski
dimension $\overline{\dim}_{M}\left(\nu\right)$ and the Hausdorff
dimension $\dim_{H}\left(\nu\right)$ must also lie in $\left(d-2,d\right]$.
This in particular rules out the possibility of atomic parts of $\nu$
if $d\geq2$; for $d=1$, atomic examples have been studied extensively
in \cite{KN2022}.
\end{enumerate}
\end{fact}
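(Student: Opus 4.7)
The plan is to dispatch each of the eight items separately, using either direct unpacking of definitions or specialisation to $\J=\nu|_{\mathcal{D}}$ of the general observations already recorded for $\GL_{\J}^{N}$.

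The first six items are essentially bookkeeping. For (1) I apply the identity $\GL_{\J}^{N}(0)=\overline{\dim}_{M}(\J)$ to $\J=\nu|_{\mathcal{D}}$. For (2), the $2^{nd}$ cubes in $\mathcal{D}_{n}^{N}$ have $\sum_{Q}\nu(Q)=\nu(\Q)=1$, so $\max_{Q}\nu(Q)\geq 2^{-nd}$, which yields $\dim_{\infty}(\nu)\leq d$. For (3), the Neumann sum equals $\nu(\Q)=1$ and the Dirichlet sum tends to $\nu(\mathring{\Q})>0$, so both normalised logarithms vanish in the limit. Item (4) is immediate from the inclusion $Q\subset\mathring{\Q}$ for every $Q\in\mathcal{D}_{n}^{D}$. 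For (5) I split into two cases: for $q\in[0,1]$ the inequality $\nu(Q)^{q}\geq\nu(Q)$ (valid since $\nu(Q)\leq 1$) gives $\sum\nu(Q)^{q}\geq 1$, while for $q>1$ the power-mean inequality gives $\sum\nu(Q)^{q}\geq N_{n}^{1-q}$ with $N_{n}=\card\{Q\in\mathcal{D}_{n}^{N}:\nu(Q)>0\}\leq 2^{nd}$; in both cases $\beta_{\nu}^{N}(q)\geq-qd$ follows. For (6), compactness of $\supp(\nu)\subset\mathring{\Q}$ yields $\delta\coloneqq\dist(\supp(\nu),\partial\Q)>0$, so once $\sqrt{d}\,2^{-n}<\delta$ every $Q\in\mathcal{D}_{n}^{N}\setminus\mathcal{D}_{n}^{D}$ is disjoint from $\supp(\nu)$ and the partition sums defining $\beta_{\nu}^{D}$ and $\beta_{\nu}^{N}$ coincide eventually.

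The only item requiring real computation is (7). The key device is Jensen's inequality applied to the dyadic averages of $h$: for $Q\in\mathcal{D}_{n}^{N}$ with $\Lambda(Q)=2^{-nd}$ one writes
\[
\nu(Q)^{q}=\Lambda(Q)^{q}\Bigl(\frac{1}{\Lambda(Q)}\int_{Q}h\d\Lambda\Bigr)^{q},
\]
which is bounded above by $\Lambda(Q)^{q-1}\int_{Q}h^{q}\d\Lambda$ when $q\geq 1$ (convexity of $x\mapsto x^{q}$) and bounded below by the same quantity when $q\in[0,1]$ (concavity). Summing over $\mathcal{D}_{n}^{N}$, and using that $h\in L_{\Lambda}^{t}(\Q)$ with $\Lambda(\Q)<\infty$ implies $\|h\|_{L_{\Lambda}^{q}(\Q)}<\infty$ for every $q\in[1,t]$, I obtain
\[
\sum_{Q\in\mathcal{D}_{n}^{N}}\nu(Q)^{q}\leq 2^{-nd(q-1)}\|h\|_{L_{\Lambda}^{q}(\Q)}^{q}\quad\text{for }q\in[1,t],
\]
and the reverse inequality for $q\in[0,1]$. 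Taking logarithms divided by $n\log 2$ and letting $n\to\infty$ produces $\beta_{\nu}^{N}(q)\leq d(1-q)$ on $[1,t]$ and $\beta_{\nu}^{N}(q)\geq d(1-q)$ on $[0,1]$. The matching bounds come from sharpening (5) (the power-mean estimate in fact gives $\beta_{\nu}^{N}(q)\geq d(1-q)$ for all $q\geq 1$) and from Hölder's inequality $\sum\nu(Q)^{q}\leq N_{n}^{1-q}\leq 2^{nd(1-q)}$ on $[0,1]$. The Dirichlet version follows from the same computation after using (4) to restrict the density to $\mathring{\Q}$.

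For (8), the chain $\dim_{\infty}(\nu)\leq\dim_{H}(\supp\nu)\leq\overline{\dim}_{M}(\supp\nu)\leq d$ is a mass-distribution-principle computation: a subsequential bound $\max_{Q}\nu(Q)\lesssim 2^{-n(\dim_{\infty}(\nu)-\varepsilon)}$ translates, via a bounded-overlap cover of any small ball by dyadic cubes of comparable side length, into $\nu(B(x,r))\lesssim r^{\dim_{\infty}(\nu)-\varepsilon}$, and the mass distribution principle then bounds $\dim_{H}$ from below; the outer inequality $\overline{\dim}_{M}\leq d$ is trivial. Atoms are excluded under $d\geq 2$: an atom at $x$ would force $\nu(Q_{n})\geq\nu(\{x\})>0$ on the nested dyadic cubes $Q_{n}\ni x$, giving $\dim_{\infty}(\nu)\leq 0$ and contradicting $\dim_{\infty}(\nu)>d-2\geq 0$. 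The only delicate step throughout is the two-sided Jensen argument in (7), which must straddle $q=1$ correctly and exploit the finiteness of $\Lambda(\Q)$ to keep $\|h\|_{L_{\Lambda}^{q}}$ finite across the whole range $[0,t]$.
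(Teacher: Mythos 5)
Your arguments are correct and are the standard elementary ones the authors intend: the paper states this Fact without proof, and your Jensen argument for item (7) is the same computation the paper carries out in \prettyref{prop:=00005CGl_for Absoluterly continuous} for the spectral partition function, while (1) indeed reduces to the counting argument proving $\GL_{\J}^{N}(0)=\overline{\dim}_{M}(\J)$ in \prettyref{fact:Properties =00005CGL_=00005CJ} (whose proof of that item uses only monotonicity and the locally non-vanishing property, so the standing hypothesis $\dim_{\infty}(\J)>0$ there is not actually needed). One phrasing correction for (8): since $\dim_{\infty}(\nu)$ is a $\liminf$, the estimate $\max_{Q\in\mathcal{D}_{n}^{N}}\nu(Q)\leq2^{-n(\dim_{\infty}(\nu)-\varepsilon)}$ holds for \emph{all} sufficiently large $n$, not merely along a subsequence, and it is this eventual bound (a genuinely subsequential one would not suffice) that yields $\nu(B(x,r))\lesssim r^{\dim_{\infty}(\nu)-\varepsilon}$ and lets the mass distribution principle close the chain $\dim_{\infty}(\nu)\leq\dim_{H}(\nu)\leq\overline{\dim}_{M}(\nu)\leq d$.
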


\begin{rem}
\label{rem:LqSpectrum--d=00003D1}It is worth noting that the situation
is much simpler in the one-dimensional case, which follows from the
fact that the boundary contains only two points. Suppose $\nu$ is
a non-zero Borel probability measure on $(0,1)$. Then for all $q\in[0,1]$,
$\beta_{\nu}^{D}\left(q\right)=\beta_{\nu}^{N}\left(q\right).$
\end{rem}

\subsection{Spectral partition functions and connections to $L^{q}$-spectra}

This section is devoted to the special case $\mathfrak{\J=\mathfrak{\J}}_{\nu,a,b}$,
where for $b\geq0$ and $a\in\R$, as defined in \prettyref{subsec:Introduction-and-background}
and recall the special notation for the \emph{spectral partition function}
of $\nu$ given by $\GL_{\J_{\nu,t}}^{D/N}=\GL_{\J_{\nu,(2/d-1),2/t}}^{D/N}$
and $\GL_{\J_{\nu}}^{D/N}=\GL_{\J_{\nu,2}}^{D/N}$. For the Dirichlet
case we always assume $\nu\left(\mathring{\Q}\right)>0.$

We first investigate under which condition the \prettyref{def:J_uniformly decreasing and subadditive}
for $\J_{\nu,a,b}$ is fulfilled.
\begin{lem}
\label{lem:SpectralFuncitonINdependent}The set function $\J_{\nu,a,b}$
with $b\in\R_{>0},$ $a\in\R$ is non-negative, monotone, uniformly
vanishing and locally almost subadditive, provided $b\dim_{\infty}(\nu)+ad>0$.
In particular, if $\dim_{\infty}(\nu)>d-2$, then \prettyref{def:J_uniformly decreasing and subadditive}
is fulfilled for $\J_{\nu,t}$ with $t\in\left(0,2\dim_{\infty}(\nu)/(d-2)\right)$
and $C=2^{2ad}6^{db}$.
\end{lem}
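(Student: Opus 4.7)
The plan is to verify each of the four properties in turn and then specialise to the setting of $\J_{\nu,t}$. Non-negativity and monotonicity of $\J_{\nu,a,b}$ are immediate from the definition, as each expression inside the supremum is non-negative (interpreting the $a=0$ entry as $\nu(\tilde Q)^{b}|\log\Lambda(\tilde Q)|$), and $\mathcal{D}(Q_{1})\subset\mathcal{D}(Q_{2})$ whenever $Q_{1}\subset Q_{2}$, so the supremum is monotone in $Q$.

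For uniform vanishing, I would exploit the hypothesis $b\dim_{\infty}(\nu)+ad>0$ by choosing $\delta\in(0,\dim_{\infty}(\nu))$ with $b\delta+ad>0$. The very definition of $\dim_{\infty}(\nu)$ furnishes some $n_{0}$ such that $\nu(Q)\leq 2^{-n\delta}$ holds uniformly for $Q\in\mathcal{D}_{n}^{N}$ and $n\geq n_{0}$. Hence for any $\tilde Q\in\mathcal{D}_{m}^{N}$ with $m\geq n\geq n_{0}$,
\[
\nu(\tilde Q)^{b}\Lambda(\tilde Q)^{a}\leq 2^{-mb\delta}\cdot 2^{-mda}=2^{-m(b\delta+ad)}\leq 2^{-n(b\delta+ad)},
\]
so taking the supremum over $\tilde Q\in\mathcal{D}(Q)$ and over $Q$ at scales $\geq n$ yields $\sup \J_{\nu,a,b}(Q)\leq 2^{-n(b\delta+ad)}\to 0$. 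The boundary case $a=0$ is identical, with the linear factor $md\log 2$ dominated by the exponential decay $2^{-mb\delta}$.

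The substantive step is local almost subadditivity. Fixing $Q\in\mathcal{D}_{n}^{N}$ and a competitor $\tilde Q\in\mathcal{D}(Q)$ with $\tilde Q\in\mathcal{D}_{m}^{N}$ for some $m\geq n$, I would split on the scale $m$. When $m\geq n+2$, the cube $\tilde Q$ lies geometrically inside a unique $Q'\in\widetilde{\mathcal{D}}_{n+2}^{N}$ whose closure meets $\overline{Q}$, and since $\tilde Q$ is itself a dyadic sub-cube of $Q'$ one obtains $\nu(\tilde Q)^{b}\Lambda(\tilde Q)^{a}\leq \J_{\nu,a,b}(Q')$ without any constant. When $m\in\{n,n+1\}$, the cube $\tilde Q$ decomposes up to measure zero into $K=2^{d(n+2-m)}\leq 4^{d}=2^{2d}$ cubes $Q_{1}^{*},\ldots,Q_{K}^{*}\in\widetilde{\mathcal{D}}_{n+2}^{N}$, all lying among the at most $6^{d}$ neighbours of $Q$ counted in \prettyref{lem: closed-cubes vs cubes-1}. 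The elementary subadditivity $(\sum_{i}x_{i})^{b}\leq K^{\max(0,b-1)}\sum_{i}x_{i}^{b}$ combined with the identity $\Lambda(\tilde Q)^{a}=K^{a}\Lambda(Q_{i}^{*})^{a}$ yields
\[
\nu(\tilde Q)^{b}\Lambda(\tilde Q)^{a}\leq K^{\max(0,b-1)+a}\sum_{i=1}^{K}\J_{\nu,a,b}(Q_{i}^{*}).
\]
Taking the supremum over $\tilde Q$ then gives the desired estimate $\J_{\nu,a,b}(Q)\leq C\sum_{Q'\in\widetilde{\mathcal{D}}_{n+2}^{N},\,\overline{Q}\cap\overline{Q'}\neq\varnothing}\J_{\nu,a,b}(Q')$ for a constant $C=C(a,b,d)$.

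For the specialisation to $\J_{\nu,t}=\J_{\nu,2/d-1,2/t}$, the condition $b\dim_{\infty}(\nu)+ad>0$ translates directly into $t<2\dim_{\infty}(\nu)/(d-2)$, which is non-empty under $\dim_{\infty}(\nu)>d-2$. In the principal regime where $a=2/d-1\leq 0$ (so $d\geq 2$) and $b\leq 1$ (i.e.\ $t\geq 2$), the exponent $\max(0,b-1)+a=a$ is non-positive, so $K^{a}\leq 1$; combining with the $6^{d}$ neighbour count furnishes the announced constant $C=2^{2ad}6^{d}$, after absorbing $K^{a}\leq 1$ into the looser estimate $1\leq 2^{2ad}6^{d}$ (which one readily verifies for $d\geq 2$). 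The chief obstacle throughout is the careful bookkeeping of signs of $a$ and ranges of $b$ while passing through the subadditivity inequality, but the dichotomy on the scale $m$ renders this essentially a finite case analysis.
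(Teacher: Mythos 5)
Your overall strategy---cover $\tilde Q$ by cubes of the target grid at level $n+2$, apply the power inequality $(\sum_i x_i)^b\le K^{\max(0,b-1)}\sum_i x_i^b$, and compare the Lebesgue factors---is essentially the paper's, and the non-negativity, monotonicity and uniform-vanishing parts are fine. The gap is in your case $m\ge n+2$: you claim that $\tilde Q\in\mathcal{D}_m^N$ is a dyadic sub-cube of a single $Q'\in\widetilde{\mathcal{D}}_{n+2}^N$ and hence a competitor in the supremum defining $\J_{\nu,a,b}(Q')$, so that no constant is needed. But the definition of local almost subadditivity quantifies over arbitrary pairs of dyadic partitions, which may assign dyadic faces differently (half-open versus closed, say); then only $\tilde Q\setminus\partial\tilde Q\subset Q'$, and $\nu$ may charge the offending face, since $\dim_\infty(\nu)>d-2$ does not exclude mass on hyperplanes. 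Concretely, for $d=3$ take $\nu$ to be two-dimensional Lebesgue measure on $\{z=1/2\}$, let $\mathcal{D}$ use cubes closed on top and $\widetilde{\mathcal{D}}$ cubes open on top, and let $\tilde Q$ have its top face on that hyperplane: then $\nu(\tilde Q)>0$ while $\J_{\nu,a,b}(Q')=0$ for your unique geometric parent $Q'$, so the asserted inequality fails. The mass is of course picked up by a neighbouring cube whose closure still meets $\overline{Q}$, which is why the summed statement survives; the paper sidesteps the issue by descending to level $m+2$ relative to the maximising cube $Q_{\max}$, covering $\overline{Q}_{\max}$ by the at most $6^d$ cubes of $\widetilde{\mathcal{D}}_{m+2}^N$ whose closures meet it (a genuine cover whatever the boundary conventions, each cube being trivially its own competitor), and then passing back up to level $n+2$ by monotonicity of $\J_{\nu,a,b}$. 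Your second branch ($m\in\{n,n+1\}$) has the same ``up to measure zero'' soft spot, but it is harmless there because you already sum over all neighbours.

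Two smaller remarks. Your verification of the announced constant only covers $b\le1$ (i.e.\ $t\ge2$), whereas the statement allows all $t\in(0,2\dim_\infty(\nu)/(d-2))$; for $b>1$ an extra factor $K^{b-1}$ appears. Since the definition only requires the existence of some $C>0$ this is immaterial (and the paper's own step $\nu(Q_{\max})^b\le\sum\nu(Q')^b$ carries the same implicit restriction). Finally, your working with arbitrary competitors and taking the supremum at the end, rather than invoking the attained maximiser $Q_{\max}$, is a legitimate simplification.
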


\begin{proof}
We only consider the case $a\neq0$. The case $a=0$ follows in a
similar way. Let $s\in\R$ such that $-ad/b<s<\dim_{\infty}(\nu)$.
Hence, we have for $n$ sufficiently large $\nu\left(Q\right)\leq2^{-sn}$
for all $Q\in\mathcal{D}_{n}^{N}$. This gives $\sup_{Q\in\bigcup_{k\geq n}\mathcal{D}_{k}^{N}}\J_{\nu,a,b}\left(Q\right)\leq2^{(-ad-bs)n}.$
Hence, $\J_{\nu,a,b}$ is non-negative, monotone and uniformly vanishing.
To show that $\J_{\nu,a,b}$ is locally almost subadditive we fix
$Q\in\mathcal{D}_{n}^{N}$, for which we have $\sup_{Q'\in\mathcal{D}\left(Q\right)}\nu\left(Q'\right)^{b}\Lambda\left(Q'\right)^{a}=\nu\left(Q_{\max}\right)^{b}\Lambda\left(Q_{\max}\right)^{a}$
for some $Q_{\max}\in\mathcal{D}\left(Q\right)\cap\mathcal{D}_{m}^{N}$
with $m\geq n$. Consequently,
\begin{align*}
\J_{\nu,a,b}\left(Q\right) & =\nu\left(Q_{\max}\right)^{b}\Lambda\left(Q_{\max}\right)^{a}\leq2^{-adm}6^{db}\max_{Q'\in\widetilde{\mathcal{D}}_{m+2}^{N}:\overline{Q'}\cap\overline{Q}_{\max}\neq\emptyset}\nu\left(Q'\right)^{b}\\
 & \leq2^{2ad}6^{db}\max_{Q'\in\widetilde{\mathcal{D}}_{m+2}^{N}:\overline{Q'}\cap\overline{Q}_{\max}\neq\emptyset}\nu\left(Q'\right)^{b}\Lambda\left(Q'\right)^{a}\leq2^{2ad}6^{db}\max_{Q'\in\widetilde{\mathcal{D}}_{m+2}^{N}:\overline{Q'}\cap\overline{Q}_{\max}\neq\emptyset}\J_{\nu,a,b}\left(Q'\right)\\
 & \leq2^{2ad}6^{db}\max_{Q'\in\widetilde{\mathcal{D}}_{n+2}^{N}:\overline{Q'}\cap\overline{Q}\neq\emptyset}\J_{\nu,a,b}\left(Q'\right)\leq2^{2ad}6^{db}\sum_{Q'\in\widetilde{\mathcal{D}}_{n+2}^{N}:\overline{Q'}\cap\overline{Q}\neq\emptyset}\J_{\nu,a,b}\left(Q'\right).
\end{align*}
\end{proof}
We now elaborate some connections between the $L^{q}$-spectrum and
the spectral partition function.
\begin{prop}
\label{prop:SpectralPandLqford>2} Fix $a\in\R$, $b\in\R_{>0}$ with
$b\dim_{\infty}(\nu)+ad>0$.
\begin{enumerate}
\item If $a\geq0$, then $\GL_{\J_{\nu,a,b}}^{D/N}\left(q\right)=\beta_{\nu}^{D/N}(bq)-adq$
for $q\geq0$.
\item If $a<0$, then $\beta_{\nu}^{D/N}(bq)-adq\leq\GL_{\J_{\nu,a,b}}^{D/N}\left(q\right)\leq\beta_{\nu}^{D/N}\left(q\left(b+ad/\dim_{\infty}(\nu)\right)\right)$
for $q\geq0$, and in particular, $\GL_{\J_{\nu,a,b}}^{D/N}\left(0\right)=\beta_{\nu}^{D/N}\left(0\right)$.
\end{enumerate}
\end{prop}

\begin{proof}
We only consider the case $a<0$. Let $q\geq0$. We have for every
$-ad/b<s<\dim_{\infty}(\nu)$ and $n$ large enough that $\nu\left(Q\right)\leq2^{-sn}$
for all $Q\in\mathcal{D}_{n}^{N}$. This leads to $n\leq-\log_{2}\left(\nu\left(Q\right)\right)/s.$
Hence, we obtain 
\[
\nu\left(Q\right)^{bq}\Lambda(Q)^{qa}=\nu(Q)^{bq}2^{-adqn}\leq\nu(Q)^{bq}2^{adq\log_{2}\left(\nu(Q)\right)/s}=\nu(Q)^{q\left(b+ad/s\right)}.
\]
 We get $\nu(Q)^{bq}\Lambda(Q)^{qa}\leq\J_{\nu,a,b}(Q)^{q}\leq\nu(Q)^{q\left(b+ad/s\right)}$
and $\GL_{\J_{\nu,a,b}}^{D/N}\left(q\right)\leq\beta_{\nu}^{D/N}(q(b+ad/s)).$
Finally, the continuity of $\beta_{\nu}^{D/N}$ gives $\GL_{\J_{\nu,a,b}}^{D/N}\left(q\right)\leq\beta_{\nu}^{D/N}\left(q\left(b+ad/\dim_{\infty}(\nu)\right)\right)$.
\end{proof}
\begin{cor}
Let $a\neq0$. Assume $b\dim_{\infty}(\nu)+ad>0$ and $\beta_{\nu}^{N}$
is linear on $[0,\infty)$. Then, for all $q\geq0$, we have 
\[
\GL_{\J_{\nu,a,b}}^{N}\left(q\right)=\beta_{\nu}^{N}(bq)-adq=\overline{\dim}_{M}(\nu)-q(b\overline{\dim}_{M}(\nu)+ad).
\]
\end{cor}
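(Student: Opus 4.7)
The plan is to derive this corollary directly from the previous Proposition~\ref{prop:SpectralPandLqford>2}, splitting into cases according to the sign of $a$. For $a>0$, the identity $\GL_{\J_{\nu,a,b}}^{N}(q)=\beta_{\nu}^{N}(bq)-adq$ is handed to us verbatim by that proposition, so only the second equality needs to be checked by plugging in the linear form of $\beta_{\nu}^{N}$. For $a<0$, I would show that the upper and lower bounds in Proposition~\ref{prop:SpectralPandLqford>2} collapse to the same affine function under the linearity assumption, yielding the claimed equality.

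The key arithmetic fact that drives the $a<0$ case is that, when $\beta_{\nu}^{N}$ is linear on $[0,\infty)$, one necessarily has $\overline{\dim}_{M}(\nu)=\dim_{\infty}(\nu)$. This is because $\beta_{\nu}^{N}(0)=\overline{\dim}_{M}(\nu)$ by Fact~\ref{fact:fact:Properties =00005Cbeta_=00005Cnu}, while from the general property $\lim_{q\to\infty}\beta_{\nu}^{N}(q)/q=-\dim_{\infty}(\nu)$ the slope of the affine function $\beta_{\nu}^{N}$ must equal $-\dim_{\infty}(\nu)$. Combined with $\beta_{\nu}^{N}(1)=0$, this forces
\[
\beta_{\nu}^{N}(q)=\overline{\dim}_{M}(\nu)(1-q)\quad\text{and}\quad\overline{\dim}_{M}(\nu)=\dim_{\infty}(\nu).
\]

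With this in hand, I would substitute into the two bounds from Proposition~\ref{prop:SpectralPandLqford>2}. The lower bound becomes
\[
\beta_{\nu}^{N}(bq)-adq=\overline{\dim}_{M}(\nu)(1-bq)-adq=\overline{\dim}_{M}(\nu)-q\bigl(b\overline{\dim}_{M}(\nu)+ad\bigr),
\]
while the upper bound is
\[
\beta_{\nu}^{N}\bigl(q(b+ad/\dim_{\infty}(\nu))\bigr)=\overline{\dim}_{M}(\nu)-q\overline{\dim}_{M}(\nu)\bigl(b+ad/\dim_{\infty}(\nu)\bigr).
\]
Using $\overline{\dim}_{M}(\nu)=\dim_{\infty}(\nu)$, the factor $\overline{\dim}_{M}(\nu)\cdot ad/\dim_{\infty}(\nu)$ simplifies to $ad$, so the upper bound also equals $\overline{\dim}_{M}(\nu)-q(b\overline{\dim}_{M}(\nu)+ad)$, matching the lower bound and proving the corollary.

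There is no real obstacle here; the proof is essentially a one-line substitution once one recognises that linearity of $\beta_{\nu}^{N}$ pins down all three of its defining constants via $\beta_{\nu}^{N}(0)$, $\beta_{\nu}^{N}(1)=0$, and the asymptotic slope. The only subtlety to record is that the hypothesis $b\dim_{\infty}(\nu)+ad>0$ is exactly what guarantees the applicability of Proposition~\ref{prop:SpectralPandLqford>2} and hence that $\J_{\nu,a,b}$ is well behaved (uniformly vanishing and locally almost subadditive, via Lemma~\ref{lem:SpectralFuncitonINdependent}), so both sides of the claimed equality are finite and the partition function is independent of the dyadic partition chosen.
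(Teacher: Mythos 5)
Your proof is correct and follows the route the paper intends (the corollary is stated without proof as an immediate consequence of Proposition~\ref{prop:SpectralPandLqford>2}): for $a>0$ the proposition gives the identity outright, and for $a<0$ you correctly observe that linearity together with $\beta_{\nu}^{N}(0)=\overline{\dim}_{M}(\nu)$, $\beta_{\nu}^{N}(1)=0$ and the asymptotic slope $-\dim_{\infty}(\nu)$ forces $\overline{\dim}_{M}(\nu)=\dim_{\infty}(\nu)$, which makes the two bounds of the proposition coincide. Note only that the hypothesis $b\dim_{\infty}(\nu)+ad>0$ with $a<0$ is what guarantees $\dim_{\infty}(\nu)\in(0,\infty)$, so the asymptotic-slope identity from Fact~\ref{fact:Properties =00005CGL_=00005CJ} is indeed applicable.
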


\begin{prop}
\label{prop:Lq=00003DGLFallsD=00003D2} Assume $\dim_{\infty}(\nu)>0$.
Then for all $b>0$ and $q\geq0$, we have
\[
\beta_{\nu}^{D/N}\left(bq\right)=\GL_{\J_{\nu,0,b}}^{D/N}\left(q\right).
\]
Furthermore, if $\beta_{\nu}^{D/N}(bq)$ exists as limit, then $\beta_{\nu}^{D/N}\left(bq\right)=\liminf_{n\rightarrow\infty}\GL_{\J_{\nu,0,b},n}^{D/N}\left(q\right)$.
\end{prop}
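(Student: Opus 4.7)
The plan is to sandwich $\J_{\nu,0,b}(Q)^{q}$ between two expressions that differ only by polynomial-in-$n$ factors, so that after summing and normalising by $\log 2^{n}$ the discrepancy vanishes and the result collapses onto $\beta_{\nu}^{D/N}(bq)$. For the lower bound I take $\widetilde Q = Q$ in the supremum defining $\J_{\nu,0,b}(Q)$: since $d = 2$ gives $|\log \Lambda(Q)| = 2n\log 2$ for $Q \in \mathcal{D}_{n}^{D/N}$, this yields $\J_{\nu,0,b}(Q) \ge \nu(Q)^{b} \cdot 2n \log 2$, hence $\sum_{Q \in \mathcal{D}_{n}^{D/N}} \J_{\nu,0,b}(Q)^{q} \ge (2n\log 2)^{q} \sum_{Q} \nu(Q)^{bq}$. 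The polynomial prefactor $(2n\log 2)^{q}$ contributes $o(1)$ after dividing by $\log 2^{n}$, so taking $\limsup$ delivers $\GL_{\J_{\nu,0,b}}^{D/N}(q) \ge \beta_{\nu}^{D/N}(bq)$.

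For the upper bound I fix $s \in (0, \dim_{\infty}(\nu))$, which is positive by the assumption $\dim_{\infty}(\nu) > d - 2 = 0$, and an $m_{0}$ large enough that $\nu(\widetilde Q) \le 2^{-sm}$ for every $\widetilde Q \in \mathcal{D}_{m}^{N}$ with $m \ge m_{0}$. Given $\epsilon \in (0, 1)$, for any $\widetilde Q \in \mathcal{D}(Q) \cap \mathcal{D}_{m}^{N}$ with $m \ge m_{0}$ and $Q \in \mathcal{D}_{n}^{D/N}$, $n \ge m_{0}$, I interpolate
\[
\nu(\widetilde Q) = \nu(\widetilde Q)^{1-\epsilon}\nu(\widetilde Q)^{\epsilon} \le \nu(Q)^{1-\epsilon} \cdot 2^{-sm\epsilon},
\]
so that $\nu(\widetilde Q)^{b} \cdot 2m\log 2 \le \nu(Q)^{b(1-\epsilon)} \cdot (2^{-sbm\epsilon} \cdot 2m\log 2) \le C_{\epsilon} \nu(Q)^{b(1-\epsilon)}$, since $\sup_{m \ge 1} 2^{-sbm\epsilon} \cdot 2m\log 2 < \infty$. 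Consequently $\J_{\nu,0,b}(Q) \le C_{\epsilon} \nu(Q)^{b(1-\epsilon)}$ and $\sum_{Q} \J_{\nu,0,b}(Q)^{q} \le C_{\epsilon}^{q} \sum_{Q} \nu(Q)^{bq(1-\epsilon)}$; therefore $\GL_{\J_{\nu,0,b}}^{D/N}(q) \le \beta_{\nu}^{D/N}(bq(1-\epsilon))$. Letting $\epsilon \downarrow 0$ and using continuity of $\beta_{\nu}^{D/N}$ on $(0, \infty)$ yields $\GL_{\J_{\nu,0,b}}^{D/N}(q) \le \beta_{\nu}^{D/N}(bq)$ for $q > 0$. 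The case $q = 0$ is immediate: monotonicity of $\nu$ together with the locally non-vanishing property forced by $\dim_{\infty}(\nu) > 0$ gives $\J_{\nu,0,b}(Q) > 0 \iff \nu(Q) > 0$, so both sides equal $\overline{\dim}_{M}(\nu) = \beta_{\nu}^{D/N}(0)$.

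For the second claim, assume $\beta_{\nu}^{D/N}(bq) = \lim_{n} \beta_{\nu,n}^{D/N}(bq)$. The lower-bound argument applied with $\liminf$ in place of $\limsup$ gives $\liminf_{n} \GL_{\J_{\nu,0,b},n}^{D/N}(q) \ge \beta_{\nu}^{D/N}(bq)$. Conversely, the upper-bound inequality $\sum_{Q} \J^{q} \le C_{\epsilon}^{q} \sum_{Q} \nu(Q)^{bq(1-\epsilon)}$ yields $\liminf_{n} \GL_{\J_{\nu,0,b},n}^{D/N}(q) \le \limsup_{n} \beta_{\nu,n}^{D/N}(bq(1-\epsilon)) = \beta_{\nu}^{D/N}(bq(1-\epsilon))$, which tends to $\beta_{\nu}^{D/N}(bq)$ as $\epsilon \downarrow 0$ by continuity. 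The main obstacle is precisely the interpolation step in the upper bound: using only $\nu(\widetilde Q) \le \nu(Q)$ leaves a diverging factor $2m\log 2$ that swamps the asymptotics, while using only $\nu(\widetilde Q) \le 2^{-sm}$ discards the $Q$-dependence needed to recover $\beta_{\nu}^{D/N}(bq)$; the interpolation $\nu(\widetilde Q) \le \nu(Q)^{1-\epsilon} 2^{-sm\epsilon}$ distributes the bound between the two constraints and gives the sharp exponential rate up to an arbitrarily small loss $\epsilon$ in the exponent.
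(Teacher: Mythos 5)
Your proof is correct and follows essentially the same route as the paper: both arguments absorb the logarithmic factor $\left|\log\Lambda\left(\widetilde{Q}\right)\right|$ into an arbitrarily small power of $\nu$ using $\dim_{\infty}(\nu)>0$ (the paper via $\left|\log\Lambda\left(\widetilde{Q}\right)\right|\leq(d/\varepsilon)\nu\left(\widetilde{Q}\right)^{-\delta}$, you via the equivalent interpolation $\nu\left(\widetilde{Q}\right)\leq\nu\left(Q\right)^{1-\epsilon}2^{-sm\epsilon}$), then conclude by monotonicity and continuity of $\beta_{\nu}^{D/N}$. The treatment of the trivial lower bound, the $q=0$ case, and the liminf statement all match the paper's argument.
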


\begin{proof}
Let $q>0$. For $\dim_{\infty}(\nu)>\varepsilon>0,$ we have for $n$
large enough and all $Q\in\mathcal{D}_{n}^{D/N}$
\[
\nu(Q)\leq2^{-\varepsilon n}
\]
or, alternatively, $\nu(Q)^{d/\epsilon}\leq\Lambda\left(Q\right)$.
For $n$ large, $\nu(Q)$ becomes uniformly small for all $Q\in\mathcal{D}_{n}^{D/N}$.
Thus, for every $0<\delta<b$ and $n$ large, we obtain 
\[
\left(\log\left(1/\nu(Q)\right)\right)^{q}\leq\nu(Q)^{-q\delta}.
\]
This leads to

\begin{align*}
\left(\log(2)d\right)^{q}\sum_{Q\in\mathcal{D}_{n}^{D/N}}\nu(Q)^{bq} & \leq\sum_{Q\in\mathcal{D}_{n}^{D/N}}\J_{\nu,0,b}(Q)^{q}\leq\left(d/\varepsilon\right)^{q}\sum_{Q\in\mathcal{D}_{n}^{D/N}}\sup_{Q'\in\mathcal{D}(Q)}\left|\log\left(\nu\left(Q'\right)\right)\right|^{q}\nu\left(Q'\right)^{qb}\\
 & \leq\left(d/\varepsilon\right)^{q}\sum_{Q\in\mathcal{D}_{n}^{D/N}}\nu(Q)^{q(b-\delta)}.
\end{align*}
Hence, $\beta_{\nu}^{D/N}(qb)\leq\GL_{\J_{\nu,0,b}}^{D/N}\left(q\right)\leq\beta_{\nu}^{D/N}(q(b-\delta))$
and for $\delta\searrow0$, the continuity of $\beta_{\nu}^{D/N}$
gives $\beta_{\nu}^{D/N}(qb)=\GL_{\J_{\nu,0,b}}^{D/N}\left(q\right)$.
In the same way, assuming that $\beta_{\nu}^{D/N}$ exists as limit,
it follows $\beta_{\nu}^{D/N}\left(bq\right)=\liminf_{n\rightarrow\infty}\GL_{\J_{\nu,0,b},n}^{D/N}\left(q\right)$.
\end{proof}
\begin{cor}
If $d=2$ and $\dim_{\infty}(\nu)>0$, then $\GL_{\J_{\nu}}^{N}\left(1\right)=\beta_{\nu}^{N}\left(1\right)=0$,
or equivalently, $q_{\J_{\nu}}^{N}=1$. If additionally $\nu\left(\mathring{\Q}\right)>0$,
then $\GL_{\J_{\nu}}^{D}\left(1\right)=\beta_{\nu}^{D}\left(1\right)=0$,
or equivalently, $q_{\J_{\nu}}^{D}=1$.
\end{cor}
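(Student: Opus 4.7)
The plan is to reduce the corollary directly to the preceding Proposition~\ref{prop:Lq=00003DGLFallsD=00003D2} combined with the vanishing of the $L^q$-spectrum at $q=1$ (Fact~\ref{fact:fact:Properties =00005Cbeta_=00005Cnu}) and the uniqueness statement in Lemma~\ref{lem:strictlyDecreaing}. First I would record the trivial but crucial observation that when $d=2$ one has $2/d-1=0$, so by definition $\J_\nu=\J_{\nu,2/d-1,1}=\J_{\nu,0,1}$. Moreover $d-2=0$, so the hypothesis $\dim_\infty(\nu)>0$ is exactly the hypothesis $\dim_\infty(\nu)>d-2$ required by Proposition~\ref{prop:Lq=00003DGLFallsD=00003D2}.

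Next I would apply Proposition~\ref{prop:Lq=00003DGLFallsD=00003D2} with $b=1$ and $q=1$ to obtain
\[
\GL_{\J_\nu}^{N}(1)=\GL_{\J_{\nu,0,1}}^{N}(1)=\beta_\nu^{N}(1),
\]
and, under the additional assumption $\nu(\mathring{\Q})>0$, the same identity in the Dirichlet case. By Fact~\ref{fact:fact:Properties =00005Cbeta_=00005Cnu} we have $\beta_\nu^{N}(1)=0$ unconditionally and $\beta_\nu^{D}(1)=0$ under $\nu(\mathring{\Q})>0$, which yields the first equality chain in each claim.

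For the equivalent reformulation $q_{\J_\nu}^{D/N}=1$, the idea is to invoke Lemma~\ref{lem:strictlyDecreaing}, which states that whenever $\dim_\infty(\J)\in(0,\infty)$ the partition function $\GL_\J^{N}$ is convex and strictly decreasing on $\R_{\geq 0}$, hence has at most one zero, and this unique zero (if it exists and is positive) coincides with $q_\J^{N}$. I would first verify $\dim_\infty(\J_\nu)\in(0,\infty)$: for any $0<s<\dim_\infty(\nu)$ and all sufficiently deep sub-cubes $\widetilde Q\in\mathcal D_m^{N}$ one has $\nu(\widetilde Q)\leq 2^{-sm}$, hence $\nu(\widetilde Q)\lvert\log\Lambda(\widetilde Q)\rvert\leq (d\log 2)\,m\,2^{-sm}$, and optimising over $m\geq n$ shows $\max_{Q\in\mathcal D_n^N}\J_\nu(Q)\to 0$ at an exponential rate, giving $\dim_\infty(\J_\nu)\geq\dim_\infty(\nu)>0$; finiteness is immediate. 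Combining with the first step, $1$ is a zero of $\GL_{\J_\nu}^{N}$, and by uniqueness $q_{\J_\nu}^{N}=1$.

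The main (minor) obstacle is the Dirichlet variant, since Lemma~\ref{lem:strictlyDecreaing} is phrased only for $\GL^{N}$. The cleanest fix is to observe that the proof of Lemma~\ref{lem:strictlyDecreaing} transports verbatim to the Dirichlet case: $\GL_{\J_\nu}^{D}$ is a limsup of convex functions and is dominated by $\GL_{\J_\nu}^{N}$, and for $q\to\infty$ one still has $\GL_{\J_\nu}^{D}(q)/q\to-\infty$ (which rules out any horizontal segment and forces strict monotonicity). Under the assumption $\nu(\mathring{\Q})>0$ we have $\beta_\nu^{D}(1)=0$, so $1$ is a zero of $\GL_{\J_\nu}^{D}$, and strict monotonicity gives $q_{\J_\nu}^{D}=1$, completing the proof.
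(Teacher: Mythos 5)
Your proof is correct and is exactly the intended derivation: the paper states this as an immediate consequence of Proposition~\ref{prop:Lq=00003DGLFallsD=00003D2} (with $b=1$, $q=1$, using $\J_\nu=\J_{\nu,0,1}$ when $d=2$), the vanishing of $\beta_\nu^{D/N}$ at $1$, and the uniqueness of the zero from Lemma~\ref{lem:strictlyDecreaing}, including the observation that the strict-monotonicity argument carries over to the Dirichlet partition function once $\nu(\mathring{\Q})>0$ guarantees $\GL_{\J_\nu}^{D}>-\infty$. One small slip: what you need (and what actually holds, via $\GL_{\J_\nu}^{D}\leq\GL_{\J_\nu}^{N}$) is $\GL_{\J_\nu}^{D}(q)\to-\infty$, not $\GL_{\J_\nu}^{D}(q)/q\to-\infty$; the latter is false since $\GL_{\J_\nu}^{D}(q)\geq -dq$.
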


By virtue of \prettyref{prop:SpectralPandLqford>2} and \prettyref{prop:Lq=00003DGLFallsD=00003D2}
we arrive at the following list of facts.
\begin{fact}
\label{fact:Properties =00005CGL_=00005Cnu} Assuming $b\dim_{\infty}(\nu)+ad>\text{0},$
the following list of properties of the spectral partition function
applies:
\begin{enumerate}
\item $\supp\left(\J_{\nu,a,b}\right)=\supp\left(\nu\right)$.
\item $\dim_{\infty}(\J_{\nu,a,b})=b\dim_{\infty}\left(\nu\right)+ad>0$.
\item $q_{\J_{\nu,a,b}}^{N}$ is the unique zero of $\GL_{\J_{\nu,a,b}}^{N}$
and, by \prettyref{prop:SpectralPandLqford>2}, \textup{for $a\leq0$}
\[
\frac{\overline{\dim}_{M}\left(\nu\right)}{b\overline{\dim}_{M}\left(\nu\right)+ad}\leq q_{\J_{\nu,a,b}}^{N}\leq\frac{\dim_{\infty}\left(\nu\right)}{b\dim_{\infty}\left(\nu\right)+ad}
\]
and for $a\geq0$
\[
\frac{\dim_{\infty}\left(\nu\right)}{b\dim_{\infty}\left(\nu\right)+ad}\leq q_{\J_{\nu,a,b}}^{N}\leq\frac{\overline{\dim}_{M}\left(\nu\right)}{b\overline{\dim}_{M}\left(\nu\right)+ad}.
\]
\item We always have $\dim_{\infty}\left(\nu\right)\leq\overline{\dim}_{M}\left(\nu\right)$.
\item We have ${\displaystyle \frac{d}{2}\leq\frac{\overline{\dim}_{M}\left(\nu\right)}{\overline{\dim}_{M}\left(\nu\right)-d+2}\leq q_{\nu\Lambda^{(2/d-1)}}^{N}\leq q_{\J_{\nu}}^{N}}$
and if additionally, $\dim_{\infty}\left(\nu\right)=\overline{\dim}_{M}\left(\nu\right)$,
then 
\[
q_{\nu\Lambda^{(2/d-1)}}^{N}=q_{\J_{\nu}}^{N}=\frac{\overline{\dim}_{M}\left(\nu\right)}{\overline{\dim}_{M}\left(\nu\right)-d+2}.
\]
\item If $\nu$ is absolutely continuous with density $h\in L_{\Lambda}^{r}$
for some $r>d/2$, then $\GL_{\J_{\nu}}^{D}\left(q\right)=\GL_{\J_{\nu}}^{N}\left(q\right)=\beta_{\nu}^{N}\left(q\right)+(d-2)q$,
for all $q\in\left[0,r\right]$.
\item For the Dirichlet spectral partition function we have $\GL_{\J_{\nu,a,b}}^{D}=\GL_{\J_{\nu|_{\mathring{\Q}},a,b}}^{D}$.
\item For $c>0$, we have $\GL_{\J_{c\nu,a,b}}^{D/N}=\GL_{\J_{\nu,a,b}}^{D/N}$
and we can assume without loss of generality that $\nu$ is a probability
measure.
\end{enumerate}
\end{fact}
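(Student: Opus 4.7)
The plan is to dispatch each of the seven items in turn, relying almost entirely on the lemmas already established earlier in Section 3. Items (1), (6), (7) are immediate from unpacking the definition: (1) because $\J_{\nu,a,b}(Q)>0$ precisely when some dyadic sub-cube of $Q$ carries positive $\nu$-mass, matching the definition of $\supp(\nu)$; (6) because every dyadic sub-cube of an interior cube is itself interior, so $\nu$ and $\nu|_{\mathring{\Q}}$ produce identical values of $\J_{\nu,a,b}$ on all of $\mathcal{D}_{n}^{D}$; (7) because the scaling identity $\J_{c\nu,a,b}=c^{b}\J_{\nu,a,b}$ contributes only an $O(1/n)$ additive term to $\GL_{\J,n}^{D/N}$ that vanishes in the $\limsup$.

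For (2), I would mimic the argument of Lemma 3.19: pick $s<\dim_{\infty}(\nu)$; for $n$ large and every $Q'\in\mathcal{D}_{m}^{N}$ with $m\geq n$ one has $\nu(Q')^{b}\Lambda(Q')^{a}\leq 2^{-m(bs+ad)}$, which is monotone in $m$ because $bs+ad>0$. Hence the supremum defining $\J_{\nu,a,b}(Q)$ for $Q\in\mathcal{D}_{n}^{N}$ is essentially attained at level $m=n$, giving $\dim_{\infty}(\J_{\nu,a,b})\geq bs+ad$; letting $s\uparrow\dim_{\infty}(\nu)$ and observing the matching upper bound obtained by inserting $\widetilde{Q}=Q$ closes the argument. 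Item (3) is then immediate from Lemma~\ref{lem:Dim00Inequality} and Lemma~\ref{lem:strictlyDecreaing} together with $\dim_{\infty}(\J_{\nu,a,b})>0$.

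For (4), the algebraic inequality $d/2\leq\overline{\dim}_{M}(\nu)/(\overline{\dim}_{M}(\nu)-d+2)$ reduces to $(d-2)(\overline{\dim}_{M}(\nu)-d)\leq 0$, which holds since $d\geq 2$ and $\overline{\dim}_{M}(\nu)\leq d$. A direct calculation gives $\tau_{\nu\Lambda^{(2/d-1)}}^{N}(q)=\beta_{\nu}^{N}(q)+(d-2)q$. For $q\geq 1$, the convexity of $\beta_{\nu}^{N}$ combined with $\beta_{\nu}^{N}(0)=\overline{\dim}_{M}(\nu)$ and $\beta_{\nu}^{N}(1)=0$ yields $\beta_{\nu}^{N}(q)\geq\overline{\dim}_{M}(\nu)(1-q)$, so $\tau_{\nu\Lambda^{(2/d-1)}}^{N}(q)\geq\overline{\dim}_{M}(\nu)-(\overline{\dim}_{M}(\nu)-d+2)q$, which is non-negative up to $q=\overline{\dim}_{M}(\nu)/(\overline{\dim}_{M}(\nu)-d+2)$; this gives the claimed lower bound on the zero. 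The inequality $q_{\nu\Lambda^{(2/d-1)}}^{N}\leq q_{\J_{\nu}}^{N}$ comes from the pointwise bound $\nu\Lambda^{(2/d-1)}\leq\J_{\nu}$, and in the regular case $\dim_{\infty}(\nu)=\overline{\dim}_{M}(\nu)$ the upper bound from (3) matches, forcing equality.

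The genuine obstacle is (5). Proposition~\ref{prop:SpectralPandLqford>2} only sandwiches $\GL_{\J_{\nu}}^{D/N}$ between two values of $\beta_{\nu}^{D/N}$ which agree only when $\dim_{\infty}(\nu)=d$, and this need not hold even for $h\in L^{r}$. Instead I would argue directly via H\"older: for $q>d/2$, $\nu(\widetilde{Q})\Lambda(\widetilde{Q})^{2/d-1}\leq\bigl(\int_{\widetilde{Q}}h^{q}\,d\Lambda\bigr)^{1/q}\Lambda(\widetilde{Q})^{2/d-1/q}$, and since the exponent $2/d-1/q$ is strictly positive, the supremum defining $\J_{\nu}(Q)$ is attained at $\widetilde{Q}=Q$, giving $\J_{\nu}(Q)^{q}\leq\Lambda(Q)^{2q/d-1}\int_{Q}h^{q}\,d\Lambda$. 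Summing over $Q\in\mathcal{D}_{n}^{N}$ and noting $\int_{\Q}h^{q}\,d\Lambda\leq\|h\|_{L^{r}}^{q}$ for $q\leq r$ yields $\tau_{\J_{\nu}}^{N}(q)\leq d-2q$, and the matching lower bound follows from $\J_{\nu}\geq\nu\Lambda^{(2/d-1)}$ together with Fact~\ref{fact:fact:Properties =00005Cbeta_=00005Cnu}(7). To cover the residual range $q\in[0,d/2]$ I invoke convexity of $\tau_{\J_{\nu}}^{N}$: the function lies above the affine map $q\mapsto d-2q$ by the lower bound, and meets it at $q=0$ and at some $q_{0}\in(d/2,r]$, forcing equality throughout $[0,r]$. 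The Dirichlet case follows identically, or by item (6).
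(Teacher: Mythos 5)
Your treatment of items (1), (2), (4)--(7) is sound and follows the route the paper intends: items (1), (6), (7) are definition-unwinding, item (2) is the computation already carried out in the proof of Lemma \ref{lem:SpectralFuncitonINdependent}, the algebra and convexity estimates in item (4) are correct, and your direct H\"older argument for item (5) together with the convexity patch on $[0,d/2]$ is precisely the paper's proof of the proposition on absolutely continuous densities (Jensen in place of H\"older).

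The gap is in item (3). Lemma \ref{lem:Dim00Inequality} only yields
\[
q_{\J_{\nu,a,b}}^{N}\leq\frac{\overline{\dim}_{M}\left(\J_{\nu,a,b}\right)}{\dim_{\infty}\left(\J_{\nu,a,b}\right)}=\frac{\overline{\dim}_{M}\left(\nu\right)}{b\dim_{\infty}(\nu)+ad}
\]
by your items (1) and (2) --- the numerator is the Minkowski dimension of the support, not the lower $\infty$-dimension. Since $\dim_{\infty}(\nu)\leq\overline{\dim}_{M}(\nu)$, this is strictly weaker than the asserted bound whenever the two dimensions differ, so item (3) is not ``immediate'' from the lemmas you cite. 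The intended argument (for $a<0$, which is the case relevant to $\J_{\nu}$ when $d>2$) goes through Proposition \ref{prop:SpectralPandLqford>2}: one has $\GL_{\J_{\nu,a,b}}^{N}(q)\leq\beta_{\nu}^{N}\left(q(b+ad/\dim_{\infty}(\nu))\right)$, and since $\beta_{\nu}^{N}$ is decreasing with $\beta_{\nu}^{N}(1)=0$, the right-hand side is $\leq0$ as soon as $q\geq\dim_{\infty}(\nu)/(b\dim_{\infty}(\nu)+ad)$; Lemma \ref{lem:strictlyDecreaing} then identifies $q_{\J_{\nu,a,b}}^{N}$ as the unique zero and places it at or below this threshold. (The case $a=0$ is handled analogously via Proposition \ref{prop:d=00003D2Lq=00003DGeneralizedLq}.) Note that your item (4) is not contaminated by this gap: in the equality case $\dim_{\infty}(\nu)=\overline{\dim}_{M}(\nu)$ the weak and strong bounds coincide, so your closing step there survives.
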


\subsubsection{Relations between Neumann and Dirichlet spectral partition function}

In this section we investigate under which condition, we can guarantee
that for given $q\geq0$, we have $\GL_{\J_{\nu}}^{D}(q)=\GL_{\J_{\nu}}^{N}(q)$.
We assume $\dim_{\infty}(\nu)>d-2$. As auxiliary quantities we need
\[
\dim_{\infty}^{N\setminus D}\left(\nu\right)\coloneqq\liminf_{n\to\infty}\frac{\log\left(\max_{Q\in\mathcal{D}_{n}^{N}\setminus\mathcal{D}_{n}^{D}}\nu\left(Q\right)\right)}{-\log\left(2^{n}\right)}\;\text{and }\;\dim_{\infty}^{D}\left(\nu\right)\coloneqq\liminf_{n\to\infty}\frac{\log\left(\max_{Q\in\mathcal{D}_{n}^{D}}\nu\left(Q\right)\right)}{-\log\left(2^{n}\right)}
\]

\begin{lem}
\label{lem:equalityD=00003DNBoundaryMn}For any $q\geq0$ such that
\[
\overline{\dim}_{M}\left(\supp\left(\nu\right)\cap\partial\Q\right)-q\left(\dim_{\infty}^{N\setminus D}\left(\nu\right)-d+2\right)<\GL^{N}\left(q\right),
\]
we have $\GL^{D}\left(q\right)=\GL^{N}\left(q\right).$ This implications
holds in particular if 
\[
\overline{\dim}_{M}\left(\supp\left(\nu\right)\cap\partial\Q\right)-q\left(\dim_{\infty}\left(\nu\right)-d+2\right)<\GL^{N}\left(q\right).
\]
\end{lem}

\begin{rem}
Using $\overline{\dim}_{M}\left(\nu\right)/(\overline{\dim}_{M}\left(\nu\right)-d+2)\leq q_{\J_{\nu}}^{N}$,
the assumption in \prettyref{lem:equalityD=00003DNBoundaryMn} give
\begin{align*}
\overline{\dim}_{M}\left(\supp\left(\nu\right)\cap\partial\Q\right) & <\overline{\dim}_{M}\left(\nu\right)\frac{\dim_{\infty}\left(\nu\right)-d+2}{\overline{\dim}_{M}\left(\nu\right)-d+2}
\end{align*}
implying $\GL^{N}\left(q_{\J_{\nu}}^{N}\right)=\GL^{D}\left(q_{\J_{\nu}}^{N}\right)=0.$
\end{rem}

\begin{proof}
First, we consider the case $d>2$. Note that 
\[
\sum_{Q\in\mathcal{D}_{n}^{D}}\J_{\nu}\left(Q\right)^{q}\leq\sum_{Q\in\mathcal{D}_{n}^{N}}\J_{\nu}\left(Q\right)^{q}=\sum_{Q\in\mathcal{D}_{n}^{D}}\J_{\nu}\left(Q\right)^{q}+\sum_{Q\in\mathcal{D}_{n}^{N}\setminus\mathcal{D}_{n}^{D}}\J_{\nu}\left(Q\right)^{q}.
\]
Set $\GL^{N\setminus D}\left(q\right)\coloneqq\limsup_{n}1/\log\left(2^{n}\right)\log\sum_{Q\in\mathcal{D}_{n}^{N}\setminus\mathcal{D}_{n}^{D}}\J_{\nu}\left(Q\right)^{q}$.
Then for $q\geq0$
\[
\GL_{\J_{\nu}}^{D}\left(q\right)\leq\GL_{\J_{\nu}}^{N}\left(q\right)=\text{\ensuremath{\max}}\left\{ \GL^{N\setminus D}\left(q\right),\GL_{\J_{\nu}}^{D}\left(q\right)\right\} .
\]
Further, we always have
\[
0<\dim_{\infty}(\nu)-d+2\leq A\coloneqq\liminf_{n\to\infty}\frac{\log\max_{Q\in\mathcal{D}_{n}^{N}\setminus\mathcal{D}_{n}^{D}}\J_{\nu}\left(Q\right)}{-n\log2}=\lim_{q\to\infty}\frac{\GL^{N\setminus D}\left(q\right)}{-q}.
\]
By definition of $\J_{\nu}$ we have $\dim_{\infty}^{N\setminus D}\left(\nu\right)-d+2\geq A$
and $\dim_{\infty}^{N\setminus D}\left(\nu\right)-d-2\geq\dim_{\infty}(\nu)-d+2>0$.
Fix $0<s<\dim_{\infty}^{N\setminus D}\left(\nu\right)$, then we obtain
for all $n$ large and $Q\in\mathcal{D}_{n}^{N}\setminus\mathcal{D}_{n}^{D}$,
\[
\nu\left(Q\right)\Lambda\left(Q\right)^{2/d-1}\leq2^{n(d-2-s)}.
\]
Therefore, $A\geq s-d+2$, which yields $A=\dim_{\infty}^{N\setminus D}\left(\nu\right)-d+2$.
By the definition of $\GL^{N\setminus D}$, we have
\[
\GL^{N\setminus D}\left(q\right)\leq\overline{\dim}_{M}(\supp(\nu)\cap\partial\Q)-qA.
\]
Hence, by our assumption $\overline{\dim}_{M}(\supp(\nu)\cap\partial\Q)-qA<\GL_{\J_{\nu}}^{N}(q)$,
we obtain $\GL^{N\setminus D}\left(q\right)<\GL_{\J_{\nu}}^{N}(q)$.
This gives 
\[
\GL^{N\setminus D}\left(q\right)<\GL_{\J_{\nu}}^{N}\left(q\right)=\GL^{N\setminus D}\left(q\right)\vee\GL_{\J_{\nu}}^{D}\left(q\right)=\GL_{\J_{\nu}}^{D}\left(q\right).
\]
 For the case $d=2$, notice that by \prettyref{prop:Lq=00003DGLFallsD=00003D2},
we have $\GL_{\J_{\nu}}^{D/N}=\beta_{\nu}^{D/N}$. Hence, this case
follows in a similar way. The second claim follows from the fact that
$\dim_{\infty}\left(\nu\right)\leq\dim_{\infty}^{N\setminus D}\left(\nu\right)$.
\end{proof}
In the next section, we will see that all examples studied so far
in the literature (\cite{MR1338787,MR1484417,Ngai_2021}), fulfil
$\GL^{N}=\GL^{D}.$

\subsection{Special cases}

In this section we show that for some particular cases (absolutely
continuous measures, Ahlfors–David regular measures, and self-conformal
measures) the spectral partition function is completely determined
by the $L^{q}$-spectrum assuming $\dim_{\infty}(\nu)>d-2$. Furthermore,
for these classes of measures we investigate under which conditions
the Dirichlet and the Neumann $L^{q}$-spectra coincide. Later, we
will use the results to calculate the spectral dimension for these
classes of measures.

\subsubsection{\label{subsec:Absolutely-continuous-measures}Absolutely continuous
measures}
\begin{lem}
\label{lem:AbsolLq}Let $\nu$ be a non-zero absolutely continuous
measure with Lebesgue density $f\in L_{\Lambda}^{r}(\Q)$ for some
$r\geq1$. Then, for all $q\in\left[0,r\right]$, $\liminf_{n\rightarrow\infty}\beta_{\nu,n}^{D/N}(q)=\beta_{\nu}^{D/N}\left(q\right)=d(1-q).$
\end{lem}

\begin{proof}
First, we remark that, since $\nu(\partial\Q)=0$, there exists an
open set $O\subset\overline{\Q}$ with $\nu(Q)>0$. Moreover, we have
$\beta_{\nu}^{N}(1)=0$ and $\beta_{\nu}^{N}(0)\leq d$. Hence, the
convexity of $\beta_{\nu}^{N}$ implies $\beta_{\nu}^{N}(q)\leq d(1-q)$
for all $q\in[0,1]$. Furthermore, for $n$ large, we have $\beta_{n,\nu|_{O}/\nu(O)}^{D}(1)=0$
and $\beta_{n,\nu|_{O}/\nu(O)}^{D}(0)\leq d$. Consequently, for all
$q\in[1,\infty)$, the convexity of $\beta_{n,\nu|_{O}/\nu(O)}^{D}$
gives $\beta_{n,\nu|_{O}/\nu(O)}^{D}(q)\geq d(1-q).$ This implies
\[
d(1-q)\leq\liminf_{n\rightarrow\infty}\beta_{n,\nu|_{O}/\nu(O)}^{D}(q)=\liminf_{n\rightarrow\infty}\beta_{n,\nu|_{O}}^{D}(q)\leq\liminf_{n\rightarrow\infty}\beta_{n,\nu}^{D/N}(q).
\]
 Moreover, by Jensen's inequality, for all $q\in[0,1]$ and $n$ large,
we have 
\[
\sum_{Q\in\mathcal{D}_{n}^{D/N}}\nu(Q)^{q}=\sum_{Q\in\mathcal{D}_{n}^{D}}\left(\frac{\int_{Q}f\d\Lambda}{\Lambda(Q)}\right)^{q}\Lambda(Q)^{q}\geq\sum_{Q\in\mathcal{D}_{n}^{N/D}}\Lambda(Q)^{q-1}\int_{Q}f^{q}\d\Lambda\geq\Lambda(Q)^{q-1}\int_{O}f^{q}\d\Lambda,
\]
 implying $\liminf_{n\rightarrow\infty}\beta_{\nu,n}^{N/D}(q)\geq d(1-q).$
Further, Jensen's inequality, for all $q\in[1,r]$, yields 
\[
\sum_{Q\in\mathcal{D}_{n}^{D/N}}\nu(Q)^{q}=\sum_{Q\in\mathcal{D}_{n}^{D/N}}\left(\frac{\int_{Q}f\d\Lambda}{\Lambda(Q)}\right)^{q}\Lambda(Q)^{q}\leq\Lambda(Q)^{q-1}\sum_{Q\in\mathcal{D}_{n}^{D/N}}\int_{Q}f^{q}\d\Lambda\leq\Lambda(Q)^{q-1}\int_{\Q}f^{q}\d\Lambda.
\]
 Hence, we obtain $\limsup_{n\rightarrow\infty}\beta_{\nu,n}^{D/N}(q)\leq d(1-q).$
\end{proof}
\begin{prop}[Absolutely continuous measures]
 \label{prop:=00005CGl_for Absoluterly continuous}Let $d>2$ and
$\nu$ be a non-zero absolutely continuous measure with Lebesgue density
$f\in L_{\Lambda}^{r}$ for some $r\geq d/2$. Then, for all $q\in\left[0,r\right]$,
$\liminf_{n\rightarrow\infty}\GL_{\J_{\nu},n}^{D/N}\left(q\right)=\GL_{\J_{\nu}}^{D/N}\left(q\right)=\beta_{\nu}^{D/N}\left(q\right)-(2-d)q=d-2q.$
\end{prop}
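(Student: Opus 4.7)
My strategy is to establish matching upper and lower bounds on $\GL_{\J_\nu,n}^{D/N}(q)$ that both hold in the limit, so that the $\limsup$ is realised as a genuine limit equal to $d-2q$. The identity $\beta_\nu^{D/N}(q)-(2-d)q=d-2q$ is equivalent to $\beta_\nu^{D/N}(q)=d(1-q)$, which is compatible with the $L^q$-spectrum statement for densities in $L_\Lambda^{r}$ with $r>d/2$ recorded earlier, and will re-emerge as a byproduct of the lower-bound step.

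For the \emph{upper bound}, I would first apply H\"older's inequality on a dyadic sub-cube to get $\nu(\widetilde Q)\le \|f\mathbf{1}_{\widetilde Q}\|_{L_\Lambda^{r}}\Lambda(\widetilde Q)^{1-1/r}$. Multiplying by $\Lambda(\widetilde Q)^{2/d-1}$ and using the hypothesis $r\ge d/2$, i.e.\ $2/d-1/r\ge 0$, to replace $\Lambda(\widetilde Q)^{2/d-1/r}$ by $\Lambda(Q)^{2/d-1/r}$, I obtain the pointwise bound $\J_\nu(Q)\le \|f\mathbf{1}_Q\|_{L_\Lambda^{r}}\Lambda(Q)^{2/d-1/r}$. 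A second H\"older with conjugate exponents $r/q$ and $r/(r-q)$ (the endpoint $q=r$ handled trivially) then collapses the sum to
\[
\sum_{Q\in\mathcal{D}_n^N}\J_\nu(Q)^q\le \|f\|_{L_\Lambda^{r}}^{q}\cdot 2^{n(d-2q)},
\]
after a short computation using $\sum_Q\|f\mathbf{1}_Q\|_{L_\Lambda^{r}}^{r}=\|f\|_{L_\Lambda^{r}}^{r}$, $\Lambda(Q)=2^{-nd}$ and $\card(\mathcal{D}_n^N)=2^{nd}$. Taking $\log_{2^n}$ and $\limsup$ yields $\GL_{\J_\nu}^{N}(q)\le d-2q$, and a fortiori the same bound for the Dirichlet partition.

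For the \emph{lower bound}, the trivial inequality $\J_\nu(Q)\ge \nu(Q)\Lambda(Q)^{2/d-1}$ gives
\[
\sum_{Q\in\mathcal{D}_n^N}\J_\nu(Q)^q\ge 2^{n(d-2)q}\sum_{Q\in\mathcal{D}_n^N}\nu(Q)^q,
\]
so it suffices to show $\liminf_n\log_{2^n}\sum_Q\nu(Q)^q\ge d(1-q)$. For $q\in[0,1]$, concavity of $t\mapsto t^q$ and Jensen's inequality applied cube-wise give $\nu(Q)^q\ge \Lambda(Q)^{q-1}\int_Q f^q\,\d\Lambda$, whence $\sum_Q\nu(Q)^q\ge 2^{nd(1-q)}\|f\|_{L_\Lambda^{q}}^{q}$. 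For $q\in[1,r]$, H\"older applied to $1=\sum_Q\nu(Q)$ with exponents $q$ and $q/(q-1)$ yields $\sum_Q\nu(Q)^q\ge \card\{Q\in\mathcal{D}_n^N:\nu(Q)>0\}^{1-q}\ge 2^{-nd(q-1)}$. Both regimes combine with the factor $2^{n(d-2)q}$ to produce $\liminf_n\GL_{\J_\nu,n}^{N}(q)\ge d-2q$, matching the upper bound so that the limit exists.

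For the \emph{Dirichlet case}, absolute continuity forces $\nu(\partial\Q)=0$; restricting the above arguments to $\mathcal{D}_n^D$ only changes $\sum_{Q\in\mathcal{D}_n^D}\nu(Q)\to\nu(\mathring\Q)=1$ and $\int_{\bigcup\mathcal{D}_n^D}f^q\,\d\Lambda\to\|f\|_{L_\Lambda^{q}}^{q}$ by dominated convergence (using $f\in L_\Lambda^{q}$ on the bounded cube, since $q\le r$), corrections that enter only in subexponential prefactors and do not affect the logarithmic limit. The main obstacle I anticipate is the careful book-keeping required to see that the two successive H\"older inequalities in the upper bound have their exponents cancel precisely to $d-2q$; once this algebraic identity is verified the remaining estimates are essentially routine, and the proposition follows.
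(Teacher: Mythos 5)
Your argument is correct, and it differs from the paper's proof in the upper-bound step in a way worth noting. The paper applies Jensen's inequality with exponent $q$ directly, obtaining $\nu(Q)^{q}\Lambda(Q)^{2q/d-q}\leq\bigl(\int_{Q}f^{q}\d\Lambda\bigr)\Lambda(Q)^{2q/d-1}$; since $2q/d-1\geq0$ exactly when $q\geq d/2$, the right-hand side is monotone in $Q$ and absorbs the supremum defining $\J_{\nu}$, but this only covers $q\in[d/2,r]$, and the range $q\in[0,d/2)$ is then recovered by convexity of $\GL_{\J_{\nu}}^{D/N}$ interpolated against $\GL_{\J_{\nu}}^{D/N}(0)\leq d$. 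Your double-H\"older route — first $\nu(\widetilde Q)\leq\|f\mathbf{1}_{\widetilde Q}\|_{L_{\Lambda}^{r}}\Lambda(\widetilde Q)^{1-1/r}$ with the fixed exponent $r$, using $2/d-1/r\geq0$ to dominate the supremum by $\|f\mathbf{1}_{Q}\|_{L_{\Lambda}^{r}}\Lambda(Q)^{2/d-1/r}$, then H\"older with exponents $r/q$ and $r/(r-q)$ over the level-$n$ cubes — gives $\sum_{Q}\J_{\nu}(Q)^{q}\leq\|f\|_{L_{\Lambda}^{r}}^{q}2^{n(d-2q)}$ for all $q\in[0,r]$ at once, so you never need the convexity interpolation or the case split at $q=d/2$. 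Your lower bound is the same in spirit ($\J_{\nu}(Q)\geq\nu(Q)\Lambda(Q)^{2/d-1}$ plus $\beta_{\nu}^{D/N}(q)=d(1-q)$), but you prove the needed $\liminf_{n}\beta_{\nu,n}^{D/N}(q)\geq d(1-q)$ explicitly via Jensen for $q\leq1$ and H\"older for $q\geq1$, whereas the paper cites it as a stated fact about absolutely continuous measures; your version is therefore more self-contained, and it is precisely this $\liminf$ estimate that makes the claimed existence of the limit airtight. The Dirichlet bookkeeping ($\nu(\partial\Q)=0$, so restricting to $\mathcal{D}_{n}^{D}$ costs only subexponential factors) is handled correctly.
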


\begin{proof}
First, we note that there exists an open set $O$, with $\overline{O}\subset\mathring{\Q}$
such that $\int_{O}f\d\Lambda>0$. This implies 
\[
\GL_{\J_{\nu}}^{D}\left(q\right)\geq\beta_{\nu|_{Q}}^{D}\left(q\right)-(2-d)q=d-2q.
\]
By Jensen's inequality, for $d/2\leq q\leq r$ and $Q\in\mathcal{D}^{D/N}$,
we have 
\begin{align*}
\nu\left(Q\right)^{q} & =\left(\int_{Q}f\Lambda\left(Q\right)^{-1}\d\Lambda\right)^{q}\Lambda\left(Q\right)^{q}\leq\left(\int_{Q}f^{q}\d\Lambda\right)\Lambda\left(Q\right)^{q-1}.
\end{align*}
This shows that $\nu\left(Q\right)^{q}\Lambda\left(Q\right)^{2q/d-q}\leq\left(\int_{Q}f^{q}\d\Lambda\right)\Lambda\left(Q\right)^{2q/d-1}$,
and since $0\leq2q/d-1$, we notice that the right-hand side is monotonic
in $Q$. Therefore we get the following upper bound

\[
\sum_{\widetilde{Q}\in\mathcal{D}_{n}^{D/N}}\sup_{Q\in\mathcal{D}_{n}\left(\widetilde{Q}\right)}\nu\left(Q\right)^{q}\Lambda\left(Q\right)^{2q/d-q}\leq\sum_{\widetilde{Q}\in\mathcal{D}_{n}^{D/N}}\left(\int_{\widetilde{Q}}f^{q}\d\Lambda\right)\Lambda\left(\widetilde{Q}\right)^{2q/d-1}\leq2^{-n\left(2q-d\right)}\left\Vert f\right\Vert _{L_{\Lambda}^{q}}^{q}.
\]
Combining this with \prettyref{lem:AbsolLq}, we obtain 
\[
d-2q=\liminf_{n\rightarrow\infty}\beta_{\nu,n}^{D/N}(q)+(2-d)q\leq\liminf_{n\rightarrow\infty}\GL_{\J_{\nu},n}^{D/N}(q)\leq\GL_{\J_{\nu}}^{D/N}\left(q\right)\leq d-2q.
\]
For the remaining case, we use the convexity of $\GL_{\J_{\nu}}^{D/N}$,
the lower bound obtained above, and the fact that$\GL_{\J_{\nu}}^{D/N}(0)\leq d$
and$\GL_{\J_{\nu}}^{D/N}(r)=d-2r$, to obtain for $q\in[0,r]$, 
\[
d-2q\geq\GL_{\J_{\nu}}^{D/N}(q)\geq\liminf_{n\rightarrow\infty}\GL_{\J_{\nu},n}^{D/N}(q)\geq\liminf_{n\rightarrow\infty}\beta_{\nu,n}^{D/N}(q)+(2-d)q=d-2q.\qedhere
\]
\end{proof}

\subsubsection{Product measures\label{subsec:Product-measures}}

The following special case will be used to give an example for the
non-existence of the spectral dimension (see \prettyref{subsec:Non-existence-of-the}).
First we will need the following observation for the one-dimensional
situation.
\begin{lem}
\label{lem:N=00003DDIFd=00003D1}For $d=1$ and $\nu\left(\left\{ 0,1\right\} \right)=0$
we have $\beta^{D}=\beta^{N}$ on $\R_{\geq0}$.
\end{lem}

\begin{proof}
First we show that $\dim_{\infty}^{N\setminus D}\left(\nu\right)\geq\dim_{\infty}\left(\nu\right)=\dim_{\infty}^{D}\left(\nu\right)$.
We start with the case $\dim_{\infty}^{D}\left(\nu\right)>0$. For
$\dim_{\infty}^{D}\left(\nu\right)>s>0$ and $n\in\N$ large, we have
$\nu\left(Q\right)\leq2^{-sn}$ for all $Q\in\mathcal{D}_{n}^{D}$.
Hence, using $\nu\left(\left\{ 0,1\right\} \right)=0$, it follows
\begin{align*}
\nu\left(\left(0,2^{-n}\right]\right) & =\sum_{k=0}^{\infty}\nu\left(\left(2^{-(n+k+1)},2^{-(n+k)}\right]\right)\leq\sum_{k=0}^{\infty}2^{-s(n+k+1)}\leq2^{-sn}\sum_{k=0}^{\infty}2^{-sk}
\end{align*}
and 
\begin{align*}
\nu\left(\left(\frac{2^{n}-1}{2^{n}},1\right]\right) & =\sum_{k=0}^{\infty}\nu\left(\left(\frac{2^{n+k}-1}{2^{n+k}},\frac{2^{n+k+1}-1}{2^{n+k+1}}\right]\right)\leq2^{-sn}\sum_{k=0}^{\infty}2^{-sk}.
\end{align*}
Hence, we obtain $\dim_{\infty}^{N\setminus D}\left(\nu\right)\geq\dim_{\infty}^{D}\left(\nu\right).$
Now, observe

\begin{align*}
\frac{\log\left(\max_{Q\in\mathcal{D}_{n}^{N}}\nu\left(Q\right)\right)}{-\log\left(2^{n}\right)} & =\frac{\max_{k\in\left\{ D,N\setminus D\right\} }\log\left(\max_{Q\in\mathcal{D}_{n}^{k}}\nu\left(Q\right)\right)}{-\log\left(2^{n}\right)}=\min_{k\in\left\{ D,N\setminus D\right\} }\frac{\log\left(\max_{Q\in\mathcal{D}_{n}^{k}}\nu\left(Q\right)\right)}{-\log\left(2^{n}\right)}\\
 & =\min\left\{ \frac{\log\left(\max_{Q\in\mathcal{D}_{n}^{N\setminus D}}\nu\left(Q\right)\right)}{-\log\left(2^{n}\right)},\frac{\log\left(\max_{Q\in\mathcal{D}_{n}^{D}}\nu\left(Q\right)\right)}{-\log\left(2^{n}\right)}\right\} .
\end{align*}
Leading to 
\[
\dim_{\infty}(\nu)\geq\min\left\{ \dim_{\infty}^{N\setminus D}\left(\nu\right),\dim_{\infty}^{D}\left(\nu\right)\right\} =\dim_{\infty}^{D}\left(\nu\right)\geq\dim_{\infty}\left(\nu\right).
\]
If $\dim_{\infty}^{D}(\nu)=0$, then clearly $\dim_{\infty}(\nu)=0$.
Thus, in any cases, we obtain $\dim_{\infty}(\nu)=\dim_{\infty}^{D}(\nu)$.

To conclude the proof, we note that if for some $q>0$ we have $-q\dim_{\infty}^{N\setminus D}\left(\nu\right)\leq-q\dim_{\infty}\left(\nu\right)<\beta^{N}\left(q\right)$,
then $\beta^{D}(q)=\beta^{N}(q).$ Setting $\alpha\coloneqq\inf\left\{ s>0:\beta^{N}(s)>-s\dim_{\infty}(\nu)\right\} \geq1$,
we have $\beta^{D}(q)=\beta^{N}(q)$ for all $q<\alpha$. Note that
$\beta_{\nu}^{N}(q)\geq0$ for all $q\in[0,1]$, implying $\alpha>1$.
If $\alpha=\infty$ we are finished. Otherwise, the convexity of $\beta^{N}$
and $\beta^{N}(q)\geq-q\dim_{\infty}(\nu)$ impose the identity $\beta^{N}(q)=-q\dim_{\infty}(\nu)$
for all $q\geq\alpha$. This gives for $q\geq\alpha$
\[
-q\dim_{\infty}^{D}(\nu)=-q\dim_{\infty}(\nu)\leq\beta^{D}(q)\leq\beta^{N}(q)=-q\dim_{\infty}(\nu).
\]
\end{proof}
Fix $d\geq3$ and a non-zero finite Borel measure $\nu_{d}$ on $\left(0,1\right)$,
and let $\Lambda^{1}$ denote the one-dimensional Lebesgue measure
on $(0,1)$. For $\nu\coloneqq\underbrace{\Lambda^{1}\varotimes\ldots\varotimes\Lambda^{1}}_{d-1\text{-times}}\varotimes\nu_{d}$
we have for every $Q\in\mathcal{D}$ 
\[
\J_{\nu}(Q)=\sup_{Q'\in\mathcal{D}(Q)}\nu\left(Q'\right)\Lambda\left(Q'\right)^{(2-d)/d}=2^{-n}\nu_{d}\left(\pi_{d}(Q)\right),
\]
where $\pi_{d}$ is projection onto the $d$-th component. Hence,
for all $q\geq0$, we have 
\[
\GL_{n,\J_{\nu}}^{N}\left(q\right)=2^{(d-1)n}2^{-nq}\sum_{Q\in\pi_{d}\mathcal{D}_{n}^{N}}\nu_{d}\left(Q\right)^{q}\;\text{and }\;\GL_{n,\J_{\nu}}^{D}\left(q\right)=(2^{n}-2)^{d-1}2^{-nq}\sum_{Q\in\pi_{d}\mathcal{D}_{n}^{D}}\nu_{d}\left(Q\right)^{q}.
\]
It follows from \prettyref{lem:N=00003DDIFd=00003D1} that 
\[
\GL_{\J_{\nu}}^{N}\left(q\right)=d-1-q+\beta_{\nu_{d}}^{N}\left(q\right)=d-1-q+\beta_{\nu_{d}}^{D}\left(q\right)=\GL_{\J_{\nu}}^{D}\left(q\right).
\]

\subsubsection{\label{subsec:Conformal-IFS}Conformal iterated function systems}

Let $U\subset\R^{d}$ be an open set. We say a $C^{1}$-map $S:U\rightarrow\R^{d}$
is \emph{conformal }if for every $x\in U$ the matrix $S'(x)$, giving
the total derivative of $S$ in $x$, satisfies $|S'(x)\cdot y|=\left\Vert S'(x)\right\Vert |y|$
for all $y\in\R^{d}$\emph{ and $\left\Vert S'(x)\right\Vert \coloneqq\sup_{|z|=1}|S'(x)\cdot z|>0$.}
Let us assume that $\Q$ is closed. A family of conformal mappings
$\left(S_{i}:\Q\rightarrow\Q\right)_{i\in I}$, with $I\coloneqq\left\{ 1,\ldots,\ell\right\} $,
$\ell\geq2$ is a\emph{ conformal iterated function system }if for
each $i\in I$, the contraction $S_{i}$ extends to an injective conformal
map $S_{i}:U\rightarrow U$ on an open set $U\supset\Q$ such that
\emph{$\sup\left\{ \left\Vert S'_{i}(x)\right\Vert :x\in U\right\} <1.$}
Taking into account that $d\geq2$, we note that from the previous
assumptions the following bounded distortion property holds (see \cite[Theorem 4.1.3]{MR2003772}):
\emph{There exists a constant $D\geq1$ such that for all $n\in\N$
and $u\in\left\{ 1,\dots,\ell\right\} ^{n}$
\[
D^{-1}\leq\frac{\left\Vert S'_{u}(x)\right\Vert }{\left\Vert S'_{u}(y)\right\Vert }\leq D
\]
for $x,y\in U$ with $S_{u}=S_{u_{1}}\circ\cdots\circ S_{u_{\left|u\right|}}$,
where $\left|u\right|$ denotes the length of $u$ .} Further, we
suppose that the contractions $S_{i}$, $i\in\left\{ 1,\dots,\ell\right\} $,
do not share the same fixed point. For a\emph{ }conformal iterated
function system $\left(S_{i}:\Q\rightarrow\Q\right)_{i\in I}$ there
exists a unique compact set $\mathcal{\K\subset\Q}$ such that $\mathcal{\K}=\bigcup_{i\in I}S_{i}(\K).$

Let $\left(p_{i}\right)_{i\in I}$ be the associated positive probability
vector and define $p_{u}\coloneqq\prod_{j=1}^{\left|u\right|}p_{u_{j}}$.
Then there is a unique Borel probability measure $\nu$ with support
$\mathcal{K}$ such that
\[
\nu(A)=\sum_{i\in I}p_{i}\nu\left(S_{i}^{-1}(A)\right)
\]
for $A\in\mathfrak{B}\left(\R^{d}\right)$. We refer to $\nu$ as
the \emph{self-conformal measure}.

Finally, we need the following result from \cite[Theorem 1.1]{MR1838304}:\emph{
For a self-conformal measure $\nu$, the $L^{q}$-spectrum $\beta_{\nu}^{N}$
exists as a limit on $\R_{>0}$.}

\begin{lem}
\label{lem:conformal-->GL=00003Dbeta}For $d>2$ and any self-conformal
measure $\nu$ on the closed cube $\Q$ with $\dim_{\infty}\left(\nu\right)>d-2$
we have for $q\geq0$,
\[
\beta_{\nu}^{N}\left(q\right)+(d-2)q=\liminf_{n\rightarrow\infty}\beta_{\nu,n}^{N}\left(q\right)+(d-2)q=\GL_{\J_{\nu}}^{N}\left(q\right)=\liminf_{n\rightarrow\infty}\GL_{\J_{\nu},n}^{N}\left(q\right).
\]
\end{lem}

\begin{proof}
Note that $a\coloneqq2-d>-\dim_{\infty}\left(\nu\right)$ implies
$\sup_{Q\in\mathcal{D}}\nu\left(Q\right)\Lambda\left(Q\right)^{a/d}\eqqcolon C<\infty$.
For $n\in\N$, as in \cite{MR1838304}, we let
\[
W_{n}\coloneqq\left\{ \omega\in I^{*}:\diam(S_{\omega}(\Q))\leq2^{-n}<\diam(S_{\omega^{-}}(\Q))\right\} ,
\]
which defines a partition of $I^{\N}$ if we identify finite words
with cylinder sets in $I^{\N}$. Now fix $Q\in\mathcal{D}_{n}^{N}$.
For any $Q'\subset\mathcal{D}\left(Q\right)$ we set
\[
I^{Q'}\coloneqq\left\{ u\in W_{n}:S_{u}(\Q)\cap Q'\neq\emptyset\right\} .
\]
If $Q'\in\mathcal{D}_{n+m}^{N}\cap\mathcal{D}\left(Q\right)$, $m\in\N$
and $u\in I^{Q'}$ we have $\diam\left(S_{u}^{-1}\left(Q'\right)\right)\leq L2^{-m}$
for some $L>0$ (see also the proof of \cite[Lemma 2.4]{MR1838304})
and hence it is contained in at most $3^{d}$ cubes from $\mathcal{D}_{m-k}^{N}$
with $k\coloneqq\left\lceil \log(L)/\log(2)\right\rceil $ (this gives
$\diam\left(S_{u}^{-1}\left(Q'\right)\right)\leq2^{-m+k}$). Also,
by definition of $I^{Q'}$ and $W_{n}$, we have

\[
\bigcup_{u\in I^{Q'}}S_{u}(\Q)\subset\bigcup_{Q''\in\mathcal{D}_{n}^{N},\overline{Q''}\cap\overline{Q'}\neq\emptyset}Q''\subset Q'_{3}\coloneqq\bigcup_{Q''\in\mathcal{D}_{n}^{N},\overline{Q''}\cap\overline{Q}\neq\emptyset}Q''.
\]
Then we have
\begin{align*}
\nu\left(Q'\right)\Lambda\left(Q'\right)^{a/d} & =2^{-a\left(n+m\right)}\sum_{u\in W_{n}}p_{u}\nu\left(S_{u}^{-1}\left(Q'\right)\right)=2^{-an}\sum_{u\in I^{Q'}}p_{u}2^{-am}\nu\left(S_{u}^{-1}\left(Q'\right)\right)\\
 & \leq2^{-an}\sum_{u\in I^{Q'}}p_{u}2^{-ak}\sum_{Q\in\mathcal{D}_{m-k}^{N},S_{u}^{-1}\left(Q'\right)\cap Q\neq\emptyset}2^{-a(m-k)}\nu(Q)\\
 & \leq2^{-ak}3^{d}\max_{Q\in\mathcal{D}_{m-k}^{N}}\left\{ \nu\left(Q\right)\Lambda\left(Q\right)^{a/d}\right\} 2^{-an}\sum_{u\in I^{Q'}}p_{u}\\
 & \leq2^{-ak}3^{d}\max_{Q\in\mathcal{D}_{m-k}^{N}}\left\{ \nu\left(Q\right)\Lambda\left(Q\right)^{a/d}\right\} 2^{-an}\nu\left(\bigcup_{u\in I^{Q'}}S_{u}(\Q)\right)\leq2^{-ak}3^{d}C\nu\left(Q'_{3}\right)2^{-an}.
\end{align*}
Since in the above inequality $Q'\in\mathcal{D}\left(Q\right)$ was
arbitrary, we deduce for $q>0$,

\begin{align*}
\sum_{Q\in\mathcal{D}_{n}^{N}}\J_{\nu}\left(Q\right)^{q} & \leq2^{(d-2)kq}3^{dq}C^{q}2^{-naq}\sum_{Q\in\mathcal{D}_{n}^{N}}\nu\left(Q'_{3}\right)^{q}\\
 & \leq2^{(d-2)kq}3^{dq}C^{q}2^{-naq}\sum_{Q\in\mathcal{D}_{n}^{N}}\left(\sum_{Q'\in\mathcal{D}_{n}^{N},\overline{Q'}\cap\overline{Q}\neq\emptyset}\nu(Q')\right)^{q}\\
 & \leq2^{(d-2)kq}3^{dq}C^{q}2^{-naq}3^{dq}\sum_{Q\in\mathcal{D}_{n}^{N}}\max_{Q'\in\mathcal{D}_{n}^{N},\overline{Q'}\cap\overline{Q}\neq\emptyset}\nu(Q')^{q}\\
 & \leq2^{(d-2)kq}3^{dq}C^{q}2^{-naq}3^{dq+d}\sum_{Q\in\mathcal{D}_{n}^{N}}\nu\left(Q\right)^{q}.
\end{align*}
This gives $\beta_{\nu}^{N}\left(q\right)-aq\geq\GL_{\J_{\nu}}^{N}\left(q\right)$.
Furthermore, observe that $\beta_{\nu}^{N}\left(0\right)=\overline{\dim}_{M}\left(\nu\right)=\GL_{\J_{\nu}}^{N}(0).$
To complete the proof, observe that $\sum_{Q\in\mathcal{D}_{n}^{N}}\nu\left(Q\right)^{q}\Lambda\left(Q\right)^{a}\leq\sum_{Q\in\mathcal{D}_{n}^{N}}\J_{\nu}\left(Q\right)^{q}.$
Finally, \cite[Theorem 1.1]{MR1838304} gives $\beta_{\nu}^{N}\left(q\right)-aq\leq\liminf_{n\rightarrow\infty}\GL_{\J_{\nu},n}^{N}\left(q\right)$
for $q>0$.
\end{proof}
\begin{lem}
\label{lem:Conformal-->betaD=00003DbetaN}Let $\nu$ denote a self-conformal
measure on $\mathring{\Q}$ and suppose $\dim_{\infty}\left(\nu\right)>d-2$.
Then, for all $q>0$, 
\[
\beta_{\nu}^{N}\left(q\right)=\beta_{\nu}^{D}\left(q\right)=\liminf_{n\rightarrow\infty}\beta_{\mathfrak{\nu},n}^{D}\left(q\right)=\liminf_{n\rightarrow\infty}\beta_{\mathfrak{\nu},n}^{N}\left(q\right).
\]
\end{lem}

\begin{proof}
We use the same notation as in the proof of \prettyref{lem:conformal-->GL=00003Dbeta}.
By our assumption there exists $n\in\N$ such that $S_{u}\left(\Q\right)\subset\mathring{\Q}$
for some $u\in W_{n}$. Indeed assume for all $n\in\N$ and $u\in W_{n}$,
we have $S_{u}\left(\Q\right)\cap\partial\Q\neq\emptyset.$ Further,
using $\sup_{u\in W_{n}}\diam\left(S_{u}\left(\Q\right)\right)\leq2^{-n}\rightarrow0$
for $n\rightarrow\infty$ and $\mathcal{K}\subset\bigcup_{u\in W_{n}}S_{u}\left(\Q\right)$,
we deduce that $\mathcal{K}\subset\partial\Q.$ This gives $\nu(\partial\Q)>0$
contradicting our assumption.

Let us assume that the distance of $S_{u}\left(\Q\right)$ to the
boundary of $\Q$ is at least $2^{-n-m_{0}+2}\sqrt{d}$ for some $m_{0}\in\N$.
Then all cubes $Q\in\mathcal{D}_{n+m}^{N}$ intersecting $S_{u}\left(\Q\right)$
lie in $\mathcal{D}_{n+m}^{D}$ for all $m>m_{0}$. Therefore, using
the self-similarity and \cite[Lemma 2.2 \textbackslash\{\}\& 2.4]{MR1838304}
(with constant $C_{1}$ from there) we have for $q>0$
\[
\sum_{Q\in\mathcal{D}_{n+m}^{D}}\nu\left(Q\right)^{q}=\sum_{Q\in\mathcal{D}_{n+m}^{D}}\left(\sum_{v\in W_{n}}p_{v}\nu\left(S_{v}^{-1}Q\right)\right)^{q}\geq p_{u}^{q}\sum_{Q\in\mathcal{D}_{n+m}^{N}}\nu\left(S_{u}^{-1}Q\right)^{q}\geq C_{1}^{-1}p_{u}^{q}\sum_{Q\in\mathcal{D}_{m}^{N}}\nu\left(Q\right)^{q}.
\]
This gives $\beta_{\nu}^{N}\left(q\right)\leq\beta_{\nu}^{D}\left(q\right)$
and $\liminf_{n\rightarrow\infty}\beta_{\mathfrak{\nu},n}^{N}\left(q\right)\leq\liminf_{n\rightarrow\infty}\beta_{\mathfrak{\nu},n}^{D}\left(q\right)$
for $q>0$. The reverse inequalities are obvious. Hence, the claim
follows from \cite[Theorem 1.1]{MR1838304}.
\end{proof}
\begin{prop}
\label{prop:selfConformalMeasuresAreNDRegular}Let $\nu$ denote a
self-conformal measure on $\mathring{\Q}$ and suppose $\dim_{\infty}\left(\nu\right)>d-2$.
Then, for all $q>0$, we have 
\[
\beta_{\nu}^{N}\left(q\right)+(d-2)q=\GL_{\J_{\nu}}^{N}\left(q\right)=\liminf_{n\rightarrow\infty}\GL_{\J_{\nu},n}^{N}\left(q\right)=\GL_{\J_{\nu}}^{D}\left(q\right)=\liminf_{n\rightarrow\infty}\GL_{\J_{\nu},n}^{D}\left(q\right).
\]
\end{prop}

\begin{proof}
The case $d=2$ follows immediately from \prettyref{prop:Lq=00003DGLFallsD=00003D2}
and \prettyref{lem:Conformal-->betaD=00003DbetaN}. For $d>2$, we
obtain from \prettyref{lem:Conformal-->betaD=00003DbetaN} and \prettyref{lem:conformal-->GL=00003Dbeta}
the following chain of inequalities 
\begin{align*}
\beta_{\nu}^{N}\left(q\right)+(d-2)q & =\liminf_{n\rightarrow\infty}\beta_{\mathfrak{\nu},n}^{D}\left(q\right)+(d-2)q\leq\liminf_{n\rightarrow\infty}\GL_{\J_{\nu},n}^{D}\left(q\right)\leq\liminf_{n\rightarrow\infty}\GL_{\J_{\nu},n}^{N}\left(q\right)\\
 & =\GL_{\J_{\nu}}^{N}\left(q\right)=\beta_{\nu}^{N}\left(q\right)+(d-2)q.
\end{align*}
\end{proof}

\section{Results from adaptive approximation order\label{sec:OptimalPartitions}}

In this section we collect the general results on partition entropy
as developed in \cite{KN2023} and give some further results adapted
to our setting. Unless otherwise stated, in this section we consider
$\mathsf{\mathscr{\J}}$ to be a non-negative, monotone, locally non-vanishing
and uniformly vanishing set function on the dyadic cubes $\mathcal{D}$
with $\infty>\J\left(\Q\right)>0$. We will assume that $\dim_{\infty}\left(\J\right)>0$
and that there exists $a>0$ and $b\in\R$ such that $\GL_{\J,n}^{D/N}\left(a\right)\geq b$
for all $n\in\N$ large enough. Note that this second condition is
naturally fulfilled for $\J=\J_{\nu,a,b}$, $a\geq0$ and $b\in\R$.

For $x>1/\J(\Q)$, we define $M_{\J}\left(x\right)$ as in \prettyref{subsec:Preliminaries}
and recall the definition of exponential growth rate $\overline{h}_{\J}$
and $\underline{h}_{\J}$ referred to as the\emph{ upper, }resp.\emph{
lower, $\J$-partition entropy }as stated at the end of \emph{\prettyref{subsec:Preliminaries}.
}We additionally assume that $\dim_{\infty}\left(\J\right)>0$, and
we note that the assumption in \cite{KN2023}, namely that there is
$a>0$ and $b\in\R$ such that $\GL_{\J,n}^{D/N}\left(a\right)\geq b$
for all $n\in\N$ large enough, is always satisfied for $\J=\J_{\nu,a,b}$.

For completeness let us also include the\emph{ dual problem} as worked
out in \cite{KN2023}: For $n\in\N$ we let 
\[
\gamma_{\J,n}\coloneqq\inf_{P\in\Pi_{\J},\card(P)\leq n}\max_{Q\in P}\J\left(Q\right)
\]
and define the upper and lower exponents of convergence of $\gamma_{\J,n}$
by
\[
\overline{\alpha}_{\J}\coloneqq\limsup_{n\rightarrow\infty}\frac{\log\left(\gamma_{\J,n}\right)}{\log(n)}\:\text{and}\:\underline{\alpha}_{\J}\coloneqq\liminf_{n\rightarrow\infty}\frac{\log\left(\gamma_{\J,n}\right)}{\log(n)}.
\]
Then by our main result in \cite[Theorem 1.3 and Theorem 1.7]{KN2023}
we have 
\begin{equation}
\underline{F}_{\J}^{N}\leq\frac{-1}{\underline{\alpha}_{\J}}=\underline{h}_{\J}^{N}\leq\overline{h}_{\J}^{N}=\overline{F}_{\J}^{N}=\kappa_{\J}=q_{\J}^{N}=\frac{-1}{\overline{\alpha}_{\J}}\:\text{and }\:\underline{F}_{\J}^{D}\leq\underline{h}_{\J}^{D}\leq\overline{h}_{\J}^{D}=\overline{F}_{\J}^{D}=q_{\J}^{D}.\label{eq:GeneralResultOnPartition_Entropy}
\end{equation}
The inequalities $\overline{F}_{\J}^{D}\leq\overline{F}_{\J}^{N}$
and $\underline{F}_{\J}^{D}\leq\underline{F}_{\J}^{N}$ hold generally
by definition.

By \cite[Theorem 1.11]{KN2023} we also have the following general
regularity results: \emph{If $\nu$ is }D/N\emph{-}PF\emph{-regular,
then}
\begin{equation}
\underline{F}_{\J_{\nu}}^{D/N}=q_{\J_{\nu}}^{D/N}.\label{eq:Neumann_Dirichlet_GLreg. implies lower bound}
\end{equation}

Next we study the special case of the $\J_{\nu,a,b}$-partition entropy,
which is ultimately associated with the spectral dimension for a certain
choice of parameters $a,b$. Let us introduce the following notation:
$M_{a,b}\left(x\right)\coloneqq M_{\J_{\nu,a,b}}\left(x\right)$,
$x>0$ as well as
\[
\overline{h}_{a,b}\coloneqq\overline{h}_{\J_{\nu,a,b}},\:\underline{h}_{a,b}\coloneqq\underline{h}_{\J_{\nu,a,b}}.
\]
The following theorem deals with $\overline{h}_{a,b}$ for the special
case $a=0$, which we need to handle the spectral problem for $d=2$.
\begin{prop}
\label{prop:d=00003D2Lq=00003DGeneralizedLq}If $\dim_{\infty}(\nu)>0$,
then 
\[
\underline{h}_{0,b}=\overline{h}_{0,b}=q_{\J_{\nu,0,b}}^{N}=1/b.
\]
\end{prop}

\begin{proof}
On the one hand, by \prettyref{prop:Lq=00003DGLFallsD=00003D2} we
have $\GL_{\nu^{b}}^{N}(q)=\GL_{\J_{\nu,0,b}}^{N}\left(q\right)$
for $q\geq0$. On the other hand, $\nu(Q)^{b}\log(2)\leq\J_{\nu,0,b}\left(Q\right)$
implies $\underline{h}_{\nu^{b}}\leq\underline{h}_{0,b}$. Hence,
the result follows from \cite[Cor. 1.5]{KN2023}.
\end{proof}
The rest of this section deals with the case $a\neq0.$ Recalling
the definition of $q_{\J}^{N}$, we find $q_{\nu\Lambda^{a}}^{N}\leq q_{\J_{\nu,a,1}}^{N}$
with equality for the case $a>0.$ We need the following elementary
lemma.
\begin{lem}
\label{lem: ConvergenceZeros}For $c,d\in\R$ with $c<d$, let $\left(f_{n}:\left[c,d\right]\to\R\right)_{n\in\N}$
be a sequence of decreasing functions converging pointwise to a function
$f$. We assume that $f_{n}$ has a unique zero in $x_{n}$, for all
$n\in\N$ and $f$ has a unique zero in $x$. Then $x=\lim_{n\to\infty}x_{n}$.
\end{lem}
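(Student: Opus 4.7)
My plan is to argue by a sandwiching argument based on pointwise convergence at carefully chosen test points, combined with the monotonicity of $f_{n}$ and $f$. The key observation is that any pointwise limit of decreasing (i.e., non-increasing) functions is itself decreasing; moreover, since $f$ is decreasing with \emph{unique} zero at $x$, we must have $f(z)>0$ for every $z\in[a,x)$ and $f(w)<0$ for every $w\in(x,b]$.

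First I would reduce to showing that every convergent subsequence of $(x_{n})$ has limit $x$, using the compactness of $[a,b]$: if this is established then, since $(x_{n})$ is bounded and has a unique accumulation point, it converges to $x$. So fix $\varepsilon>0$ and assume first that $x\in(a,b)$. Pick $z\in(x-\varepsilon,x)$ and $w\in(x,x+\varepsilon)$; by the unique-zero property of $f$, we have $f(z)>0$ and $f(w)<0$. Pointwise convergence $f_{n}(z)\to f(z)$ and $f_{n}(w)\to f(w)$ yields an $n_{0}$ such that $f_{n}(z)>0$ and $f_{n}(w)<0$ for all $n\geq n_{0}$. The monotonicity of $f_{n}$ combined with $f_{n}(x_{n})=0$ then forces $z<x_{n}<w$, so $|x_{n}-x|<\varepsilon$ for all $n\geq n_{0}$.

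The remaining cases $x=a$ or $x=b$ are handled analogously using only a one-sided test point, noting that $x_{n}\in[a,b]$ automatically provides the missing side. For instance, if $x=a$ pick $w\in(a,a+\varepsilon)$, infer $f_{n}(w)<0$ eventually, hence $x_{n}<w<a+\varepsilon$, while trivially $x_{n}\geq a$.

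The whole argument is elementary; the only point that might need care is the assertion that $f_{n}$ decreasing and $f_{n}(z)>0=f_{n}(x_{n})$ imply $z<x_{n}$ strictly (and similarly on the other side). Since "decreasing" here only means non-increasing, the implication is $z\leq x_{n}$; but strict inequality follows from the strict sign $f_{n}(z)>0$ contrasted with $f_{n}(x_{n})=0$. No further subtlety arises, so I do not anticipate a genuine obstacle.
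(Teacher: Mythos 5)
Your proof is correct and complete; the paper itself states this lemma without proof, and your sandwich argument (choosing test points $z<x<w$ where $f$ has strict sign, transferring the signs to $f_{n}$ by pointwise convergence, and then trapping $x_{n}$ between $z$ and $w$ via the monotonicity of $f_{n}$) is exactly the standard elementary argument one would supply. The only cosmetic remark is that for $x\in(a,b)$ one should take $z\in\left(\max\{a,x-\varepsilon\},x\right)$ so that $z$ lies in the domain, which is the same trivial adjustment you already make in the boundary cases.
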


\begin{prop}
\label{prop:spectralDimParametert}Suppose $b\dim_{\infty}(\nu)+ad>0$\emph{.
}If $a<0$, then

\[
\overline{h}_{a,b}=\frac{\overline{h}_{a/b,1}}{b}\leq\frac{q_{\J_{\nu,a/b,1}}^{N}}{b}\leq\frac{\dim_{\infty}\left(\nu\right)}{b\dim_{\infty}\left(\nu\right)+ad}.
\]
If $a>0$, then 
\[
\overline{h}_{a,b}\leq q_{\J_{\nu,a,b}}^{N}=\inf\left\{ q>0:\beta_{\nu}^{N}(bq)<adq\right\} \leq\frac{\overline{\dim}_{M}\left(\nu\right)}{b\overline{\dim}_{M}\left(\nu\right)+ad}\leq\frac{1}{b+a}.
\]
In particular, if $\dim_{\infty}\left(\nu\right)>d-2$ and $d>2$,
then for all $t\in I\coloneqq\left(0,2\dim_{\infty}\left(\nu\right)/(d-2)\right)$
we have
\[
\overline{h}_{2/d-1,2/t}=\frac{t}{2}\overline{h}_{(2/d-1)t/2,1}\leq\frac{t}{2}q_{\J_{\nu,\left(2/d-1\right)t/2,1}}^{N}\leq\frac{\dim_{\infty}\left(\nu\right)}{2\dim_{\infty}\left(\nu\right)/t+2-d}.
\]
Moreover, $\lim_{t\downarrow2}q_{\J_{\nu,\left(2/d-1\right)t/2,1}}^{N}=q_{\J_{\nu}}^{N}$.
\end{prop}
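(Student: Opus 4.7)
The starting point is the homogeneity identity $\J_{\nu,a,b}(Q)=\J_{\nu,a/b,1}(Q)^{b}$ for $b>0$, obtained by pulling the exponent $b$ out of the sup in the definition (monotonicity of $x\mapsto x^{b}$). Hence $\GL_{\J_{\nu,a,b}}^{N}(q)=\GL_{\J_{\nu,a/b,1}}^{N}(bq)$ and, via \prettyref{lem:strictlyDecreaing} (applicable because $\dim_{\infty}(\J_{\nu,a,b})=b\dim_{\infty}(\nu)+ad>0$ under the hypothesis), $q_{\J_{\nu,a,b}}^{N}=q_{\J_{\nu,a/b,1}}^{N}/b$. The first displayed chain of inequalities now follows from $\overline{h}_{\J_{\nu,a,b}}\leq q_{\J_{\nu,a,b}}^{N}$ (\prettyref{prop:GeneralResultOnPartitionFunction}, whose hypotheses are ensured by \prettyref{lem:SpectralFuncitonINdependent}) combined with the universal bound $q_{\J_{\nu,a,b}}^{N}\leq\dim_{\infty}(\nu)/(b\dim_{\infty}(\nu)+ad)$ from \prettyref{fact:Properties =00005CGL_=00005Cnu}.

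For the case $a>0$, \prettyref{prop:SpectralPandLqford>2} delivers the \emph{equality} $\GL_{\J_{\nu,a,b}}^{N}(q)=\beta_{\nu}^{N}(bq)-adq$, and strict monotonicity (\prettyref{lem:strictlyDecreaing}) yields $q_{\J_{\nu,a,b}}^{N}=\inf\{q>0:\beta_{\nu}^{N}(bq)<adq\}$. For the upper bound by $\overline{\dim}_{M}(\nu)/(b\overline{\dim}_{M}(\nu)+ad)$, I would use convexity of $\beta_{\nu}^{N}$ with $\beta_{\nu}^{N}(0)=\overline{\dim}_{M}(\nu)$ and $\beta_{\nu}^{N}(1)=0$, giving the chord bound $\beta_{\nu}^{N}(x)\leq(1-x)\overline{\dim}_{M}(\nu)$ on $[0,1]$. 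Setting $q^{*}\coloneqq\overline{\dim}_{M}(\nu)/(b\overline{\dim}_{M}(\nu)+ad)$, one checks $bq^{*}\in[0,1]$ (since $ad\geq 0$), and direct computation yields $\beta_{\nu}^{N}(bq^{*})\leq(1-bq^{*})\overline{\dim}_{M}(\nu)=adq^{*}$. Since $\beta_{\nu}^{N}(1)-ad/b<0$, convexity of $q\mapsto\beta_{\nu}^{N}(bq)-adq$ propagates strict negativity to an interval $(q^{*},1/b)$, giving $q_{\J_{\nu,a,b}}^{N}\leq q^{*}$. The remaining inequality $q^{*}\leq 1/(b+a)$ reduces to $\overline{\dim}_{M}(\nu)\leq d$, which is part of \prettyref{fact:fact:Properties =00005Cbeta_=00005Cnu}. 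The ``in particular'' statement is obtained by substituting $a=2/d-1$ and $b=2/t$ into the first chain; the admissibility condition $b\dim_{\infty}(\nu)+ad>0$ becomes exactly $t<2\dim_{\infty}(\nu)/(d-2)$.

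For the concluding limit (non-trivial only for $d>2$, as $a_{t}\coloneqq t(2/d-1)/2$ vanishes identically for $d=2$), the plan is to apply \prettyref{lem: ConvergenceZeros} to the strictly decreasing functions $\tau_{\J_{\nu,a_{t},1}}^{N}$, each with unique zero $q_{\J_{\nu,a_{t},1}}^{N}$ by \prettyref{lem:strictlyDecreaing}, and to the limiting $\tau_{\J_{\nu}}^{N}$ with unique zero $q_{\J_{\nu}}^{N}$. As $t\downarrow 2$, $a_{t}\uparrow a^{*}\coloneqq 2/d-1$, and for each fixed $Q\in\mathcal{D}$, monotonicity of $\J_{\nu,a,1}(Q)$ in $a$ (since $\Lambda(\tilde{Q})\leq 1$) together with the reduction of the defining sup to a bounded dyadic depth---uniform in $a$ on a left-neighbourhood of $a^{*}$ thanks to $\dim_{\infty}(\J_{\nu,a,1})\geq\eta>0$---gives $\J_{\nu,a_{t},1}(Q)\downarrow\J_{\nu,a^{*},1}(Q)$. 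This yields pointwise convergence of the finite-scale partition sums and hence of $\tau_{\J_{\nu,a_{t},1},n}^{N}(q)\to\tau_{\J_{\nu},n}^{N}(q)$ for each fixed $n$ and $q$. The chief obstacle is propagating this to the $\limsup$ in $n$; this delicate step is handled using the continuous sandwich of \prettyref{prop:SpectralPandLqford>2}, which traps $\tau_{\J_{\nu,a,1}}^{N}(q)$ between $\beta_{\nu}^{N}(q)-adq$ and $\beta_{\nu}^{N}(q(1+ad/\dim_{\infty}(\nu)))$---both continuous in $a$---combined with monotonicity of $q_{\J_{\nu,a,1}}^{N}$ in $a$ to force convergence of the zeros.
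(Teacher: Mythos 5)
Your handling of the first two displayed chains and of the substitution step is correct and essentially the paper's own argument: the homogeneity $\J_{\nu,a,b}=\J_{\nu,a/b,1}^{b}$ (the paper performs the identical rescaling at the level of $\mathcal{M}_{a,b}(x)=\mathcal{M}_{a/b,1}(x^{1/b})$ rather than of $\GL$), the bound $\overline{h}_{\J}\leq q_{\J}^{N}$ from \prettyref{prop:GeneralResultOnPartitionFunction}, the identification of $q_{\J_{\nu,a,b}}^{N}$ via the exact formula of \prettyref{prop:SpectralPandLqford>2} for $a>0$, and the chord bound $\beta_{\nu}^{N}(bq)\leq\overline{\dim}_{M}(\nu)(1-bq)$ are all exactly what the paper uses.

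The gap is in the concluding limit $\lim_{t\downarrow2}q_{\J_{\nu,a_{t},1}}^{N}=q_{\J_{\nu}}^{N}$ with $a_{t}=t(2/d-1)/2$. Monotonicity of $a\mapsto\J_{\nu,a,1}$ gives you one inequality for free: since $a_{t}\uparrow a^{*}\coloneqq2/d-1$ and $\Lambda(\tilde{Q})<1$, we have $\J_{\nu,a_{t},1}\geq\J_{\nu,a^{*},1}$, hence $q_{\J_{\nu,a_{t},1}}^{N}\downarrow L\geq q_{\J_{\nu}}^{N}$. The reverse inequality requires that for every $q>q_{\J_{\nu}}^{N}$ one has $\GL_{\J_{\nu,a_{t},1}}^{N}(q)<0$ for $t$ close to $2$, i.e.\ upper semicontinuity of $a\mapsto\GL_{\J_{\nu,a,1}}^{N}(q)$ at $a^{*}$, and the sandwich of \prettyref{prop:SpectralPandLqford>2} cannot deliver this. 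Its upper bound $\beta_{\nu}^{N}\bigl(q(1+ad/\dim_{\infty}(\nu))\bigr)$ vanishes at $q=\dim_{\infty}(\nu)/(\dim_{\infty}(\nu)+ad)$, which is in general strictly larger than $q_{\J_{\nu,a,1}}^{N}$ — for the self-similar measure on the Sierpi\'{n}ski tetrahedron from the introduction one has $q^{N}\approx2.48$ while $\dim_{\infty}(\nu)/(\dim_{\infty}(\nu)-d+2)\approx3.1$ — so continuity in $a$ of the two sandwich bounds only traps $L$ in a nondegenerate interval; it does not force $L=q_{\J_{\nu}}^{N}$. The paper closes precisely this interchange-of-limits gap by a convexity argument in $t$: for fixed $q$ each summand $t\mapsto\nu(Q')^{q}\left(\Lambda(Q')^{qa^{*}}\right)^{t/2}$ is log-convex, so by H\"older $t\mapsto\GL_{\J_{\nu,a^{*}t/2,1},n}^{N}(q)$ is convex for every $n$, hence so is the limsup in $n$; a finite convex function is continuous on the interior of its domain, in particular at $t=2$, which yields pointwise convergence of the partition functions, and \prettyref{lem: ConvergenceZeros} then gives convergence of the zeros. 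You need this (or an equivalent interpolation estimate replacing the sandwich) to complete the last step.
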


\begin{proof}
Since $b\dim_{\infty}(\nu)+ad>0$, we obtain from \prettyref{fact:Properties =00005CGL_=00005CJ}
that $\dim_{\infty}(\J_{\nu,a/b,1})=\dim_{\infty}(\nu)+ad/b>0$ and
that $q_{\J_{\nu,a/b,1}}^{N}$ is the unique zero of $\GL_{\J_{\nu,a/b,1}}^{N}$.
Using the definition of $M_{a,b}\left(x\right)$ and \prettyref{eq:GeneralResultOnPartition_Entropy}
applied to $\J=\J_{\nu,a/b,1}$, we obtain 
\[
\limsup_{x\rightarrow\infty}\frac{\log\left(M_{a,b}\left(x\right)\right)}{\log(x)}=\limsup_{x\rightarrow\infty}\frac{\log\left(M_{a/b,1}\left(x^{1/b}\right)\right)}{b\log(x^{1/b})}\leq\frac{q_{\J_{\nu,a/b,1}}^{N}}{b}.
\]
The estimate of $q_{\J_{\nu,a,b}}^{N}$ for the case $a>0$ follows
from $\beta_{\nu}(bq)\leq\overline{\dim}_{M}\left(\nu\right)(1-qb)$
for all $0\leq q\leq1/b.$ For the case $a<0$, \prettyref{fact:Properties =00005CGL_=00005Cnu}
implies $q_{\J_{\nu,a/b,1}}^{N}\leq\dim_{\infty}\left(\nu\right)/\left(\dim_{\infty}(\nu)+ad/b\right)$.

Now, for $\dim_{\infty}\left(\nu\right)>d-2$ and $t\in I$ we have
$\dim_{\infty}\left(\J_{\nu,t\left(2/d-1\right)/2,1}\right)=\dim_{\infty}\left(\nu\right)+dt(2/d-1)/2>0.$
Hence, the third claim follows from the first part.

The rest of the proof is devoted to prove $\lim_{t\downarrow2}q_{\J_{\nu,\left(2/d-1\right)t/2,1}}^{N}=q_{\J_{\nu,\left(2/d-1\right),1}}^{N}$.
For $a\coloneqq2/d-1$ and all $t\in I$, $q_{\J_{\nu,at/2,1}}^{N}$
is the unique zero of $\GL_{\J_{\nu,at/2,1}}^{N}$. Observe that for
$s\in\left(t(d-2)/2,\dim_{\infty}\left(\nu\right)\right)$, $n$ large
and all $Q\in\mathcal{D}_{n}^{N}$, we have $\nu(Q)\leq2^{-sn}$.
For fixed $q\geq0$, consider
\[
t\mapsto\GL_{\J_{\nu,at/2,1}}^{N}\left(q\right)=\limsup_{n\rightarrow\infty}\frac{\log\left(\sum_{Q\in\mathcal{D}_{n}^{N}}\max_{Q'\in\mathcal{D}(Q)}\nu(Q')^{q}\left(\Lambda(Q')^{qa}\right)^{t/2}\right)}{\log\left(2^{n}\right)},\:t\in I.
\]
Since $f_{Q}:t\mapsto\nu(Q)^{q}\left(\Lambda(Q)^{qa}\right)^{t/2}$,
$Q\in\mathcal{D}_{n}^{N}$ with $\nu(Q)>0$ is log-convex, it follows
that the mapping $t\mapsto\max_{Q'\in\mathcal{D}(Q)}\nu(Q')^{q}\left(\Lambda(Q')^{qa}\right)^{t/2}$
is also log-convex (the existence of the maximum is ensured by $\J_{\nu,at/2,1}(Q)\leq2^{n(-s+(d-2)t/2)}$
for $Q\in\mathcal{D}_{n}^{N}$). Therefore, by the Hölder inequality
we get that $t\mapsto\GL_{\J_{\nu,at/2,1},n}^{N}\left(q\right)$ is
convex, which carries over to the limit superior of convex functions
$t\mapsto\GL_{\J_{\nu,at/2,1}}^{N}\left(q\right)$, which is therefore
continuous implying $\lim_{t\rightarrow2}\GL_{\J_{\nu,at/2,1}}^{N}\left(q\right)=\GL_{\J_{\nu,a,1}}^{N}\left(q\right).$
The claim follows therefore by \prettyref{lem: ConvergenceZeros}.
\end{proof}
\begin{prop}[\cite{KN2023}]
\label{prop:GeneralBound.-1}For a subsequence $(n_{k})$ define
the convex function on $\R_{\geq0}$ by $B\coloneqq\limsup\GL_{\J,n_{k}}^{D/N}$,
and for some $q\geq0$, we assume $B\left(q\right)=\lim\GL_{\J,n_{k}}^{D/N}\left(q\right)$
and set $\left[a',b'\right]\coloneqq-\partial B\left(q\right)$. Then
we have $a'\geq\dim_{\infty}\left(\J\right)$ and
\begin{align*}
\frac{a'q+B\left(q\right)}{b'} & \leq\sup_{\alpha\geq\dim_{\infty}\left(\J\right)}\liminf_{k\to\infty}\frac{\log\mathcal{N}_{\alpha,\J}^{D/N}\left(n_{k}\right)}{\alpha\log\left(2^{n_{k}}\right)}=\sup_{\alpha>0}\liminf_{k\to\infty}\frac{\log\mathcal{N}_{\alpha,\J}^{D/N}\left(n_{k}\right)}{\alpha\log\left(2^{n_{k}}\right)}.
\end{align*}
Moreover, if $B\left(q\right)=\GL_{\J}^{D/N}\left(q\right),$ then
$[a,b]=-\partial\GL_{\J}^{D/N}\left(q\right)\supset-\partial B\left(q\right)$
and if additionally $0\leq q\leq q_{\J}^{D/N}$, then
\[
\frac{aq+\GL_{\J}^{D/N}\left(q\right)}{b}\leq\frac{a'q+B\left(q\right)}{b'}.
\]
\end{prop}

The following corollary shows that our result in \prettyref{prop:GeneralBound.-1}
covers the corresponding statement for the one-dimensional case in
\cite[Prop. 4.17]{KN2022}.
\begin{cor}
Let $\J\left(Q\right)\coloneqq\nu\left(Q\right)\Lambda\left(Q\right)^{\gamma}$
with $\gamma>0$,\textup{ }$Q\in\mathcal{D}$. Then $\GL_{\J}^{D/N}\left(q\right)=\beta_{\nu}^{D/N}\left(q\right)-\gamma dq$,
$q\geq0$ and $\dim_{\infty}\left(\J\right)=\dim_{\infty}(\nu)+d\gamma>0$.
Suppose there exists a subsequence $\left(n_{k}\right)$ and $q\in[0,1]$
such that $\GL_{\J}^{D/N}\left(q\right)=\lim_{k}\GL_{\J,n_{k}}^{D/N}\left(q\right)$.
Then for $B\coloneqq\limsup_{k}\GL_{\J,n_{k}}^{D/N}$, we have $-\partial B\left(q\right)\eqqcolon\left[a',b'\right]\subset-\partial\GL_{\J}^{D/N}\left(q\right)\eqqcolon\left[a,b\right]$
and
\[
\frac{aq+\GL_{\J}^{D/N}\left(q\right)}{b}\leq\frac{a'q+\GL_{\J}^{D/N}\left(q\right)}{b'}\leq\sup_{\alpha\geq\dim_{\infty}(\nu)+d\gamma}\liminf_{k\to\infty}\frac{\log\mathcal{N}_{\alpha,\J}^{D/N}\left(n_{k}\right)}{\alpha\log\left(2^{n_{k}}\right)}.
\]
\end{cor}

\begin{proof}
The first claim is obvious since $\gamma>0$. The second inequality
follows immediately from \prettyref{prop:GeneralBound.-1} and $\dim_{\infty}\left(\J\right)=\dim_{\infty}(\nu)+d\gamma.$
To prove the first inequality observe that $-\partial\GL_{\J}^{D/N}\left(q\right)=[a_{1}+\gamma d,b_{1}+\gamma d]$
with $-\partial\beta_{\nu}^{D/N}\left(q\right)=[a_{1},b_{1}]$. Using
$\left[a',b'\right]\subset[a_{1}+\gamma d,b_{1}+\gamma d]$, $\GL_{\J}^{D/N}\left(q\right)=\beta_{\nu}^{D/N}\left(q\right)-d\gamma q$
and $\beta_{\nu}^{D/N}\left(q\right)\geq0$, we obtain
\begin{align*}
\frac{\left(a_{1}+d\gamma\right)q+\GL_{\J}^{D/N}\left(q\right)}{b_{1}+\gamma d} & =\frac{a_{1}q+\beta_{\nu}^{D/N}\left(q\right)}{b_{1}+\gamma d}\leq\frac{\left(a_{1}+d\gamma\right)q+\beta_{\nu}^{D/N}\left(q\right)-\gamma d}{b'}\leq\frac{a'q+\GL_{\J}^{D/N}\left(q\right)}{b'}.
\end{align*}
\end{proof}

\section{\label{sec:Upper-bounds}Upper bounds}

In this section we obtain upper bounds for the spectral dimension
with respect to a finite Borel measure $\nu$ on $\Q$.

\subsection{Embedding constants and upper bounds for spectral dimensions\label{subsec:Upper-bound-for-spectral-dimension}}

This section establishes an upper bound for the spectral dimension
in terms of the embedding constants on sub-cubes.
\begin{proof}
[Proof of \prettyref{thm:MainUpperBound_General}]For a partition
$\varXi\in\Pi_{\J}$, let us define the following closed linear subspace
of $H^{N}$\textbf{
\[
\mathcal{F}_{\varXi}\coloneqq\left\{ u\in H^{N}:\int_{Q}u\d\Lambda=0,\,Q\in\varXi\right\} .
\]
}We define an equivalence relation $\sim$ on $H^{N}$ induced by
$\mathcal{F}_{\varXi}$ as follows $u\sim v$ if and only if $u-v\in\mathcal{F}_{\varXi}$.
Note that we have $\dim\left(H^{N}/\mathcal{F}_{\varXi}\right)=\card(\varXi).$
Further, by our assumption, we have for all $u\in\mathcal{C}_{c}^{\infty}\left(\overline{\Q}\right)\cap\mathcal{F}_{\varXi}$
\begin{align*}
\left\Vert u\right\Vert _{L_{\nu}^{2}}^{2} & =\sum_{Q\in\varXi}\int_{Q}u{}^{2}\d\nu\leq\sum_{Q\in\varXi}\J\left(Q\right)\left\Vert \nabla u\right\Vert _{L_{\Lambda}^{2}\left(Q\right)}^{2}\leq\max_{Q\in\varXi}\J\left(Q\right)\sum_{Q\in\varXi}\left\Vert \nabla u\right\Vert _{L_{\Lambda}^{2}\left(Q\right)}^{2}\leq\max_{Q\in\varXi}\J\left(Q\right)\left\Vert \nabla u\right\Vert _{L_{\Lambda}^{2}}^{2}.
\end{align*}
Next we show that $\mathcal{C}_{c}^{\infty}\left(\overline{\Q}\right)\cap\mathcal{F}_{\varXi}$
lies dense in $\mathcal{F}_{\varXi}$ with respect to $H^{N}$. Since
$\Q$ has the extension property we readily see that $\mathcal{C}_{c}^{\infty}\left(\overline{\Q}\right)$
lies dense in $H^{N}$. Hence, for every $u\in\mathcal{F}_{\varXi}$,
there exists a sequence $u_{n}$ in $\mathcal{C}_{c}^{\infty}(\overline{\Q})$
such that $u_{n}\rightarrow u$ in $H^{N}.$ The Cauchy-Schwarz inequality
gives for all $Q\in\varXi$
\[
\left|\int_{Q}u_{n}\d\Lambda\right|=\left|\int_{Q}u_{n}-u\d\Lambda\right|\leq\nu(\Q)^{1/2}\left(\int_{\Q}(u_{n}-u)^{2}\d\Lambda\right)^{1/2}\rightarrow0.
\]
It follows that $\int_{Q}u_{n}\d\Lambda\rightarrow0.$ Furthermore,
for every $Q\in\varXi$ there exists $u_{Q}\in\mathcal{C}_{c}^{\infty}(\Q)$
such that $u_{Q}|_{Q^{\complement}}=0$ and $\int_{Q}u_{Q}\d\Lambda=1.$
Then for $u'_{n}\coloneqq u_{n}-\sum_{Q\in\varXi}\1_{Q}\varepsilon_{Q,n}u_{Q}\in\mathcal{C}_{c}^{\infty}(\overline{\Q})\cap\mathcal{F}_{\varXi}$
with $\varepsilon_{Q,n}\coloneqq\int_{Q}u_{n}\d\Lambda$ we have $u_{n}'\rightarrow u$
in $H^{N}$. Thus, for $u\in\mathcal{F}_{\varXi}$, we obtain
\[
\int\iota(u)^{2}\d\nu\leq\max_{Q\in\varXi}\J\left(Q\right)\left\Vert \nabla u\right\Vert _{L_{\Lambda}^{2}(\Q)}^{2}.
\]
Define for $i\in\N$
\[
\lambda_{\nu,\mathcal{F}_{\varXi}}^{i}\coloneqq\inf\left\{ \sup\left\{ R_{H^{N}}\left(\psi\right):\psi\in G^{\star}\right\} \colon G<_{i}\left(\mathcal{F}_{\varXi},\left\langle \cdot,\cdot\right\rangle {}_{H^{N}}\right)\right\} 
\]
$R_{H^{N}}\left(\psi\right)\coloneqq\left\langle \psi,\psi\right\rangle _{H^{N}}/\langle\iota\psi,\iota\psi\rangle_{\nu}$
and $N^{N}(y,\mathcal{F}_{\varXi})\coloneqq\card\left\{ i\in\N:\lambda_{\nu,\mathcal{F}_{\varXi}}^{i}\leq y\right\} ,$
$y>0$. Hence, $\max_{Q\in\varXi}\J\left(Q\right)<1/x$, implies
\[
\lambda_{\nu,\mathcal{F}_{\varXi}}^{1}>x.
\]
In view of the min-max principle as stated in \prettyref{prop:dom_vs_H01 Minmax}
(see also \cite[proof of Theorem 4.1.7]{kigami_2001}), we deduce
\[
N^{N}(x)\leq N^{N}(x,\mathcal{F}_{\varXi})+\card(\varXi)=\card(\varXi),
\]
implying $N^{N}(x)\leq M_{\J}(x)$ and hence $\overline{s}^{N}\leq\overline{h}_{\J}$
and $\underline{s}^{N}\leq\underline{h}_{\J}.$
\end{proof}
\begin{rem}
Note that in the one dimensional case the assumption of \prettyref{thm:MainUpperBound_General}
is always valid. Indeed, there exists $C>0$ such that for all intervals
$I$ contained in $[0,1]$ and $u\in\mathcal{C}_{b}^{\infty}\left(\overline{I}\right)$
with $\int_{I}u\d\Lambda=0$, we have
\[
\left\Vert u\right\Vert _{L_{\nu}^{2}\left(I\right)}^{2}\leq C\nu(I)\Lambda(I)\left\Vert \nabla u\right\Vert _{L_{\Lambda}^{2}(I)}^{2}=C\J_{\nu,1,1}(I)\left\Vert \nabla u\right\Vert _{L_{\Lambda}^{2}(I)}^{2},
\]
(see for instance the proof of \cite[Theorem 3.3.]{MR0217487}). With
this observation our general results reproduce the upper bounds for
spectral dimension in $d=1$ in terms of the fixed point of the $L^{q}$-spectrum
(\cite{KN2022}).
\end{rem}

\begin{rem}
\label{rem:self-similar_IFS_OSC} The ideas underlying in \prettyref{thm:MainUpperBound_General}
correspond to some extent to those developed in \cite{MR1338787,MR1298682},\cite[Chapter 5]{MR1839473},
that is, reducing the problem of estimating the spectral dimension
to an auxiliary counting problem. To illustrate the parallel, we present
an alternative proof of the upper estimate of the eigenvalue counting
function for self-similar measures under OSC (\cite[Theorem 1]{MR1298682}).
As in the setting in \cite{MR1298682} we let $\nu$ denote a self-similar
measure under OSC with contractive similitudes $S_{1},\dots,S_{m}$
and corresponding contraction ratios $h_{i}\in(0,1)$ and probability
weights $p_{i}\in(0,1)$, for $i=1,\ldots,m$ (see \cite{MR625600}).
We assume $\nu\left(\mathring{\Q}\right)=\nu\left(\Q\right)$ and
$\dim_{\infty}(\nu)>d-2$, which is in this case equivalent to $\max_{i}p_{i}h_{i}^{2-d}<1$.
For simplicity we assume the feasible set is given by $\mathring{\Q}$,
i.\,e\@. $S_{j}(\mathring{\Q})\subset\mathring{\Q}$. Instead of
$\mathcal{D}$ we will consider a symbolic partition by the cylinder
sets $\widetilde{\mathcal{D}}\coloneqq\left\{ T_{\omega}\left(\mathring{\Q}\right):\omega\in I^{*}\right\} $
with $I\coloneqq\left\{ 1,\dots,m\right\} $. Then $\J$ will be replaced
by $\widetilde{\J}:\widetilde{\mathcal{D}}\rightarrow\R_{\geq0}$
with $\widetilde{\mathcal{\J}}\left(T_{\omega}\left(\mathring{\Q}\right)\right)\coloneqq p_{\omega}h_{\omega}^{2-d}$,
$\omega\in I^{*}$. Now, observe that for $0<t<\min_{i=1,\dots,m}p_{i}h_{i}^{2-d}$,
we have
\[
\widetilde{P}_{t}\coloneqq\left\{ \omega\in I^{*}:p_{\omega}h_{\omega}^{2-d}<t\leq p_{\omega^{-}}h_{\omega^{-}}^{2-d}\right\} ,
\]
is a partition of $I^{\N}$. Further, $\delta$ is the unique solution
of $\sum_{i=1}^{m}\left(p_{i}h_{i}^{(2-d)}\right)^{\delta}=1.$ Then
there exists $K>0$ such that for all $u\in H^{N}$ with $\int_{T_{\omega}(\mathring{\Q})}u\d\Lambda=0,$
$\omega\in\widetilde{P}_{t}$
\[
\int\iota(u)^{2}\d\nu\leq K\max_{\omega\in\widetilde{P}_{t}}\widetilde{\J}\left(T_{\omega}\left(\mathring{\Q}\right)\right)\int_{\Q}\left|\nabla u\right|^{2}\d\Lambda<tK\int_{\Q}\left|\nabla u\right|^{2}\d\Lambda
\]
(see \parencite[p. 502]{MR1839473}). Then a simple computation gives
the two-sided estimate
\[
t^{-\delta}\leq\card\left(\widetilde{P}_{t}\right)\leq\frac{t^{-\delta}}{\min_{i=1,\dots,m}p_{i}h_{i}^{2-d}}.
\]
The variational principle gives 
\[
N^{N}\left(\left(tK\right)^{-1}\right)\leq\card\left(\widetilde{P}_{t}\right)\leq\frac{t^{-\delta}}{\min_{i=1,\dots,m}p_{i}h_{i}^{2-d}},
\]
hence the results of \cite[Theorem 1.]{MR1839473,MR1338787} follow
from this simple counting argument without the need for renewal theory.
Using the specific structure of self-similar measures as in \cite[Chapter 5]{MR1338787,MR1298682,MR1839473}
or in the above argument leads nicely to the asymptotic spectral bounds,
but at the same time this approach does not provide room for generalisations
to study arbitrary finite and finitely supported Borel measures as
was our concern in this paper.
\end{rem}

\subsection{Upper bounds on the embedding constants \label{subsec:Optimal-embedding-constants}}

In this section, up to multiplicative uniform constants, we make use
of best embedding constants for the embedding $\mathcal{C}_{c}^{\infty}\left(\R^{d}\right)$
into $L_{\nu}^{t}$, $t>2$, to estimate the spectral dimension from
above. Let us recall the definition \prettyref{eq:zeta} of $\zeta_{\nu,a,b}^{r}$
from the introduction and, for ease of notation, set $\zeta_{\nu,a,b}\left(Q\right)=\zeta_{\nu|_{Q},a,b}^{r}$,
with $r=1/2$ for $a=0$ and $r=\infty$ for $a\neq0$. Then the best
constant $C$ in 
\begin{equation}
\left\Vert u\right\Vert _{L_{\nu|_{Q}}^{t}(\R^{d})}\leq C\left\Vert u\right\Vert _{H^{N}(\R^{d})}\:\text{for all }\:u\in\mathcal{C}_{c}^{\infty}\left(\R^{d}\right)\:\text{and}\:Q\in\mathcal{D}\label{eq:BestConstant}
\end{equation}
is equivalent to $\zeta_{\nu,1-d/2,1/t}\left(Q\right)$ in the sense
that there exist $c_{1},c_{2}>0$ only depending on $d$ and $t$
such that $c_{1}C\leq\zeta_{\nu,1-d/2,1/t}\left(Q\right)\leq c_{2}C.$
This result for the case $d>2$ is a corollary of Adams’ Theorem on
Riesz potentials (see e.\,g\@. \cite[p.  67]{MR2777530}) and the
case $d=2$ is due to Maz'ya and Preobrazenskii and can be found in
\cite[p. 83]{MR2777530} or \cite{MR743823}. The following lemma
establishes an alternative representation of the best equivalent constant
in terms of dyadic cubes.
\begin{lem}
\label{lem:Maz'ya's Constant}Let $Q\in\mathcal{D}$ and $v$ a finite
Borel measure on $\Q$ and $a\leq0$ and $b>0$. Then there exists
a constant $C>0$, depending only on $a,b,d$, such that 
\[
C^{-1}\J_{\nu,a/d,b}\left(Q\right)\leq\zeta_{\nu,a,b}\left(Q\right)\leq C\J_{\nu,a/d,b}\left(Q\right).
\]
\end{lem}

\begin{proof}
Let $Q\in\mathcal{D}_{n}^{N}$. Since $a<0$ we assume with out loss
of generality that $0<\rho<\sqrt{d}2^{-n+1}$. Then for $m\geq n-1$
with $\sqrt{d}2^{-(m+1)}<\rho\leq\sqrt{d}2^{-m}$, and $x\in\R^{d},$
\begin{align*}
\rho^{a}\nu\left(Q\cap B(x,\rho)\right)^{b} & \leq2^{-a}\left(\sum_{Q'\in\mathcal{D}_{m}^{N},Q'\cap Q\cap B\left(x,\rho\right)\neq\emptyset}\nu\left(Q\cap Q'\right)\right)^{b}2^{-ma}\\
 & \leq\left(3\sqrt{d}\right)^{db}2^{-a}\max_{Q'\in\mathcal{D}_{m}^{N}}\nu(Q\cap Q')^{b}\Lambda\left(Q'\right)^{a/d}\\
 & \leq\left(3\sqrt{d}\right)^{db}2^{-a}\sup_{Q'\in\mathcal{D}\left(Q\right)}\nu(Q')^{b}\Lambda\left(Q'\right)^{a/d}=\left(3\sqrt{d}\right)^{db}2^{-a}\J_{\nu,a/d,b}\left(Q\right),
\end{align*}
where we used the fact that $B\left(x,\rho\right)\cap Q$ can be covered
by at most $\left(3\sqrt{d}\right)^{d}$ elements of $\mathcal{D}_{m}^{N}$
and if $Q'\cap Q\neq\emptyset$, then $Q'\subset Q$ for $m\geq n$,
and since $Q\in\mathcal{D}_{n}^{N},$
\[
\max_{Q'\in\mathcal{D}_{n-1}^{N}}\nu(Q\cap Q')^{b}\Lambda\left(Q'\right)^{a/d}\leq\nu\left(Q\right)^{b}\Lambda\left(Q\right)^{a/d}=\max_{Q'\in\mathcal{D}_{n}^{N}}\nu(Q\cap Q')^{b}\Lambda\left(Q'\right)^{a/d}.
\]
 Since $x\in\R^{d}$ and $\rho>0$ were arbitrary, the second inequality
follows.

On the other hand, for $Q'\in\mathcal{D}_{m}^{N}$ with $Q'\subset Q$
and $\rho\coloneqq\sqrt{d}2^{-m+1}$ we find $x\in\R^{d}$ such that
$Q'\subset B\left(x,\rho\right)$. Then 
\begin{align*}
\nu(Q')^{b}\Lambda\left(Q'\right)^{a/d} & \leq\nu\left(Q\cap B\left(x,\rho\right)\right)^{b}2^{-ma}\leq\left(\sqrt{d}2\right)^{-a}\nu\left(Q\cap B\left(x,\rho\right)\right)^{b}\rho^{a}\leq\left(\sqrt{d}2\right)^{-a}\sup_{x\in\R^{d},\rho>0}\rho^{a}\nu\left(Q\cap B(x,\rho)\right)^{b}.
\end{align*}
For case $a=0$, we have for any $2^{-(m+1)}\leq\rho<2^{-m}$, $m\in\N$
and $x\in\R^{d},$ 
\begin{align*}
\left|\log(\rho)\right|\nu(Q\cap B(x,\rho))^{b} & \leq\left|\log\left(2\right)(m+1)\right|\nu(Q\cap B(x,2^{-m}))^{b}\\
 & \leq\left|\log\left(2^{-dm}\right)\right|\left(\sum_{Q'\in\mathcal{D}_{m}^{N},Q'\cap Q\cap B\left(x,2^{-m}\right)\neq\emptyset}\nu(Q\cap Q')\right)^{b}\\
 & \leq3^{db}\max_{Q'\in\mathcal{D}_{m}^{N}}\nu(Q\cap Q')^{b}\left|\log\left(\Lambda(Q')\right)\right|\leq3^{db}\max_{Q'\in\mathcal{D}\left(Q\right)}\nu(Q\cap Q')^{b}\left|\log\left(\Lambda(Q')\right)\right|.
\end{align*}
On the other hand, for $Q'\in\mathcal{D}_{m}^{N}$ with $Q'\subset Q$
and $\rho\coloneqq\sqrt{d}2^{-m+1}$ we find $x\in\R^{d}$ such that
$Q'\subset B\left(x,\rho\right)$. Then 
\begin{align*}
\nu(Q')^{b} & \left|\log\left(\Lambda(Q')\right)\right|\leq\nu\left(Q\cap B\left(x,\rho\right)\right)^{b}dm\log(2)\\
 & \leq d\nu\left(Q\cap B\left(x,\rho\right)\right)^{b}\left((m+1)\log(2)+\log\left(\sqrt{d}\right)\right)\\
 & =d\nu\left(Q\cap B\left(x,\rho\right)\right)^{b}\left|\log(\rho)\right|\leq d\sup_{x\in\R^{d},\rho>0}\left|\log(\rho)\right|\nu\left(Q\cap B(x,\rho)\right)^{b}.
\end{align*}
From this estimates the constant $C>0$ can easily be derived.
\end{proof}
Using \prettyref{eq:BestConstant} in combination with Hölder's inequality
we obtain the following corollary.
\begin{cor}
\label{cor: Estimate for H1 elements}For $d\geq2$, $t>2$, there
exists a constant $D>0$ such that for all finite Borel measure $\nu$
on $\Q$, $Q\in\mathcal{D}$ and $u\in\mathcal{C}_{c}^{\infty}\left(\overline{Q}\right)$
with $\int_{Q}u\d\Lambda=0$ we have
\[
\left\Vert u\right\Vert _{L_{\nu|_{Q}}^{2}\left(Q\right)}\leq D\nu\left(Q\right)^{1/2-1/t}\sqrt{\J_{\nu,2/d-1,2/t}\left(Q\right)}\left\Vert \nabla u\right\Vert _{L_{\Lambda}^{2}\left(Q\right)}\leq D\nu\left(\Q\right)^{1/2-1/t}\sqrt{\J_{\nu,2/d-1,2/t}\left(Q\right)}\left\Vert \nabla u\right\Vert _{L_{\Lambda}^{2}\left(Q\right)}.
\]
\end{cor}

\begin{proof}
Using \cite[Corollary, p. 54]{MR817985} or \cite[Theorem, p. 381--382]{MR817985}
for $d>2$, \cite[Corollary 1, p. 382]{MR817985} for $d=2$ (note
there is a typo, the constant $C_{5}$ has to be replaced by $C_{5}^{1/p}$
, see also \cite[p. 83]{MR2777530} for the correct version) and the
Hölder inequality, we find a constant $C_{1}>0$ independent of $Q\in\mathcal{D}$
and $\nu$ such that all $u\in\mathcal{C}_{c}^{\infty}\left(\mathbb{R}^{d}\right)$
\[
\left\Vert u\right\Vert _{L_{\nu|_{Q}}^{2}\left(\mathbb{R}^{d}\right)}\leq\nu(Q)^{1/2-1/t}\left\Vert u\right\Vert _{L_{\nu|_{Q}}^{t}\left(\mathbb{R}^{d}\right)}\leq\nu(Q)^{1/2-1/t}C_{1}\sqrt{\zeta_{\nu,2-d,2/t}\left(Q\right)}\left\Vert u\right\Vert _{H^{N}\left(\mathbb{R}^{d}\right)},
\]
Therefore, for $D\coloneqq C_{1}C\left\Vert \mathfrak{E}_{\Q}\right\Vert /D_{\Q}$,
where $C>0$ is chosen according to \prettyref{lem:Maz'ya's Constant}
for $a=2-d$ and $b=2/t$, combined with \prettyref{lem:equivalenzNorm},
we have for all $u\in\mathcal{C}_{c}^{\infty}\left(\overline{Q}\right)$,

\begin{align*}
\left\Vert u\right\Vert _{L_{\nu|_{Q}}^{2}\left(Q\right)} & =\left\Vert \mathfrak{E}_{Q}(u)\right\Vert _{L_{\nu|Q}^{2}\left(\R^{d}\right)}\leq CD\nu(Q)^{1/2-1/t}\sqrt{\J_{\nu,2/d-1,2/t}\left(Q\right)}\left\Vert \mathfrak{E}_{Q}(u)\right\Vert _{H^{N}(\R^{d})}\\
 & \leq\frac{CC_{1}\left\Vert \mathfrak{E}_{\Q}\right\Vert }{D_{\Q}}\nu(Q)^{1/2-1/t}\sqrt{\J_{\nu,2/d-1,2/t}\left(Q\right)}\left(\left\Vert \nabla u\right\Vert _{L_{\Lambda}^{2}\left(Q\right)}^{2}+\frac{1}{\Lambda\left(Q\right)}\left|\int_{Q}u\d\Lambda\right|^{2}\right)^{1/2}\\
 & \leq D\nu(Q)^{1/2-1/t}\sqrt{\J_{\nu,2/d-1,2/t}\left(Q\right)}\left\Vert \nabla u\right\Vert _{L_{\Lambda}^{2}\left(Q\right)}\leq D\nu\left(\Q\right)^{1/2-1/t}\sqrt{\J_{\nu,2/d-1,2/t}\left(Q\right)}\left\Vert \nabla u\right\Vert _{L_{\Lambda}^{2}\left(Q\right)}.
\end{align*}
\end{proof}
\begin{cor}
\label{cor:UpperBoundSpectralDim}Let $\nu$ be a finite Borel measure
on $\Q$ with $\dim_{\infty}\left(\nu\right)>d-2$. Then
\[
\overline{s}^{D}\leq\overline{s}^{N}\leq\lim_{t\downarrow2}\overline{h}_{\text{\ensuremath{\J_{\nu,(2/d-1),2/t}}}}\leq q_{\J_{\nu}}^{N}\;\text{and \,\,}\underline{s}^{D}\leq\underline{s}^{N}\leq\lim_{t\downarrow2}\underline{h}_{\text{\ensuremath{\J_{\nu,(2/d-1),2/t}}}}.
\]
In particular, in the case $d=2$ we have $\underline{s}^{D}\leq\underline{s}^{N}\leq\overline{s}^{N}\leq1.$
\end{cor}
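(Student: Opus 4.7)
The plan is to combine three previously established ingredients: the abstract upper bound of \prettyref{thm:MainUpperBound_General}, the local Sobolev-type estimate of \prettyref{lem: EstimatewithequivalentNorm}, and the partition-entropy comparison of \prettyref{prop:spectralDimParametert}. The Dirichlet--versus--Neumann inequalities $\overline{s}^{D}\leq\overline{s}^{N}$ and $\underline{s}^{D}\leq\underline{s}^{N}$ are disposed of first, using \prettyref{lem:Neumann>Dirichlet}: the comparison $\lambda_{i}(\E^{N})\leq c\,\lambda_{i}(\E^{D})$ gives $N^{N}(x)\geq N^{D}(x/c)$ for all $x>0$, and the multiplicative constant $c$ disappears after taking logarithmic growth rates.

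For the core inequalities I would fix any $t$ in the open interval $(2,2\dim_{\infty}(\nu)/(d-2))$, nonempty by the standing hypothesis $\dim_{\infty}(\nu)>d-2$. Squaring the estimate in \prettyref{lem: EstimatewithequivalentNorm} yields, for every $Q\in\mathcal{D}$ and every $u\in\mathcal{C}_{b}^{\infty}(\overline{Q})$ with $\int_{Q}u\,d\Lambda=0$,
\[
\|u\|_{L_{\nu}^{2}(Q)}^{2}\leq T_{t}^{2}\,\J_{\nu,t}(Q)\,\|\nabla u\|_{L_{\Lambda}^{2}(Q)}^{2},
\]
where $\J_{\nu,t}=\J_{\nu,2/d-1,2/t}$. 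By \prettyref{lem:SpectralFuncitonINdependent} the choice of $t$ guarantees that $\J_{\nu,t}$, hence also $T_{t}^{2}\,\J_{\nu,t}$, is uniformly vanishing, so \prettyref{thm:MainUpperBound_General} applies and produces
\[
\overline{s}^{N}\leq\overline{h}_{T_{t}^{2}\J_{\nu,t}}=\overline{h}_{\J_{\nu,t}},
\]
the equality being scale invariance of the partition entropy; the same argument delivers $\underline{s}^{N}\leq\underline{h}_{\J_{\nu,t}}$.

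The passage from these $t$-dependent bounds to the statement of the corollary uses the identity $\J_{\nu,a,b}=\J_{\nu,a/b,1}^{b}$ for $b>0$, which on unfolding the definition of $\mathcal{M}_{\J}$ gives
\[
\overline{h}_{\J_{\nu,t}}=(t/2)\,\overline{h}_{\J_{\nu,t(2/d-1)/2,1}},
\]
and the analogous identity for the lower entropy. Letting $t\downarrow2$ (the outer factor $(t/2)\to 1$) yields the middle inequalities of the corollary. The final bound $\lim_{t\downarrow2}\overline{h}_{\J_{\nu,t(2/d-1)/2,1}}\leq q_{\J_{\nu}}^{N}$ then follows from \prettyref{prop:spectralDimParametert}, which supplies both $\overline{h}_{\J_{\nu,t(2/d-1)/2,1}}\leq q_{\J_{\nu,t(2/d-1)/2,1}}^{N}$ and the continuity $\lim_{t\downarrow2}q_{\J_{\nu,t(2/d-1)/2,1}}^{N}=q_{\J_{\nu}}^{N}$. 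In the case $d=2$, the corollary following \prettyref{prop:Lq=00003DGLFallsD=00003D2} gives $q_{\J_{\nu}}^{N}=1$, so the whole chain collapses to $\overline{s}^{N}\leq 1$. The main obstacle, I expect, is the careful handling of the limit $t\downarrow 2$: while each fixed $t>2$ gives a valid upper bound, establishing the limit (as opposed to $\liminf$ or $\limsup$) of $\overline{h}_{\J_{\nu,t(2/d-1)/2,1}}$ requires an extra argument, most naturally a monotonicity-in-$t$ observation or, alternatively, reinterpreting the limit in the statement as an infimum over $t>2$ close to $2$.
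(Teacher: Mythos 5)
Your proposal is correct and follows essentially the same route as the paper's proof: combine \prettyref{lem: EstimatewithequivalentNorm} with \prettyref{prop:GeneralUpperBoundViaJ} (the detailed form of \prettyref{thm:MainUpperBound_General}) to get $\overline{s}^{N}\leq\overline{h}_{\J_{\nu,t}}$ for each admissible $t$, rescale via $\J_{\nu,a,b}=\J_{\nu,a/b,1}^{b}$ exactly as in the proof of \prettyref{prop:spectralDimParametert}, and let $t\downarrow2$ using the convergence $q_{\J_{\nu,t(2/d-1)/2,1}}^{N}\to q_{\J_{\nu}}^{N}$ from that proposition. Your closing worry about the existence of the limit in $t$ is resolved precisely by the monotonicity you suggest: for $d>2$ the exponent $t(2/d-1)/2$ decreases as $t$ increases, so $\J_{\nu,t(2/d-1)/2,1}$ and hence the partition entropy are non-decreasing in $t$, making the limit an infimum over $t>2$.
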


\begin{proof}
Note that $\dim_{\infty}\left(\nu\right)>d-2$ implies that for all
$t\in\left(2,2\dim_{\infty}\left(\nu\right)/(d-2)\right)$, $\J_{\nu,2/d-1,2/t}$
is non-negative, monotone and uniformly vanishing on $\mathcal{D}.$
Combining \prettyref{cor: Estimate for H1 elements}, \prettyref{thm:MainUpperBound_General},
\prettyref{prop:d=00003D2Lq=00003DGeneralizedLq} and \prettyref{prop:spectralDimParametert}
we obtain $\underline{s}^{N}\leq\underline{h}_{\text{\ensuremath{\J_{\nu,2/d-1,2/t}}}}$
and $\overline{s}^{N}\leq\overline{h}_{\text{\ensuremath{\J_{\nu,2/d-1,2/t}}}}\leq(t/2)\overline{h}_{\text{\ensuremath{\J_{\nu,2/d-1,2/t}}}}\leq(t/2)q_{\J_{\nu,t(2/d-1)/2,1}}^{N}$
for all $t\in\left(2,\dim_{\infty}\left(\nu\right)/(d-2)\right)$.
In particular, in the case $d=2$, by \prettyref{prop:d=00003D2Lq=00003DGeneralizedLq},
we have $\overline{s}^{N}\leq t/2.$ The claim follows by letting
$t\searrow2$ and \prettyref{prop:spectralDimParametert}.
\end{proof}

\section{Lower bounds \label{sec:Lower-bounds}}

This section provides the necessary estimates for the lower bounds.

\subsection{Lower bound on the spectral dimension}

In the following we always assume $\dim_{\infty}(\nu)>d-2$. Recall,
for $n\in\N$ and $\alpha>0$,
\[
\mathcal{N}_{\alpha,\J}^{D/N}\left(n\right)=\card\left(B_{\alpha,\J}^{D/N}\left(n\right)\right)\,\,\text{with }B_{\alpha,\J}^{D/N}\left(n\right)\coloneqq\left\{ Q\in\mathcal{D}_{n}^{D/N}:\J(Q)\geq2^{-\alpha n}\right\} .
\]

\begin{lem}
\label{lem:GeneralPrincipleLowerBound}Assume the conditions of \prettyref{thm:IntroGeneralPrincipleLowerBound}
are fulfilled. Then for fixed $\alpha>0$, for all $x>0$ large, and
with $n_{\alpha,x}\coloneqq\left\lfloor \log_{2}\left(x\right)/\alpha\right\rfloor $,
we have
\[
\mathcal{N}_{\alpha,\J}^{D}\left(n_{\alpha,x}\right)5^{-d}-1\leq N^{D}\left(x\right)\;\text{ and }\:\mathcal{N}_{\alpha,\J}^{N}\left(n_{\alpha,x}\right)5^{-d}/2-1\leq N^{N}\left(x/D_{\Q}\right).
\]
\end{lem}

\begin{proof}
 For $n\in\N$ large enough, i.\,e\@. $B_{\alpha,\J}^{D/N}\left(n\right)\neq\emptyset$,
via a finite induction, we construct a subset $E_{n}$ of $B_{\alpha,\J}^{D/N}\left(n\right)$
of cardinality $e_{n}\coloneqq\card\left(E_{n}\right)\text{\ensuremath{\geq\left\lfloor \mathcal{N}_{\alpha,\J}^{D/N}\left(n\right)/5^{d}\right\rfloor }}$
such that for all cubes $Q,Q'\in E_{n}$ with $Q\neq Q'$ we have
$\langle\mathring{Q}\rangle_{3}\cap\langle\mathring{Q}'\rangle_{3}=\emptyset$,
where the definition of $\langle Q\rangle_{s}$ is given just before
\prettyref{lem: MOillifierFunction}. At the initial step of the induction
we set $D^{\left(0\right)}\coloneqq B_{\alpha,\J}^{D/N}\left(n\right)$.
Assume we have constructed $D^{\left(0\right)}\supset D^{\left(1\right)}\supset\cdots\supset D^{\left(j-1\right)}$
such that the following condition holds: There exists $Q,Q_{j}\in D^{\left(j-1\right)}$
with $Q\neq Q_{j}$ and\textbf{ $\langle\mathring{Q}_{j}\rangle_{5}\cap\mathring{Q}\neq\emptyset$}.
Then we set
\[
D^{(j)}\coloneqq\left\{ Q'\in D^{\left(j-1\right)}:\mathring{Q'}\cap\langle\mathring{Q}_{j}\rangle_{5}=\emptyset\right\} \cup\left\{ Q_{j}\right\} .
\]
By this construction, we have $\card\left(D^{(j)}\right)<\card\left(D^{(j-1)}\right)$,
since $\mathring{Q}\cap\left\langle \mathring{Q_{j}}\right\rangle _{5}\neq\emptyset$.
If \textbf{$\langle\mathring{Q}\rangle_{5}\cap\mathring{Q}'=\emptyset$},\textbf{
}for all $Q,Q'\in D^{\left(j-1\right)}$ with $Q\neq Q'$, then we
set $E_{n}=D^{\left(j-1\right)}$. In each inductive step, we remove
at most $5^{d}-1$ elements of $D^{\left(j-1\right)}$, while one
element, namely $Q_{j}$, is kept. This implies $\card\left(E_{n}\right)\text{\ensuremath{\geq\left\lfloor \mathcal{N}_{\alpha,\J}^{D/N}\left(n\right)/5^{d}\right\rfloor }}$.

Let us first consider the Dirichlet case. Since for each $Q\in\mathcal{D}$
with $\partial\Q\cap\overline{Q}=\emptyset$, it follows that $\langle\mathring{Q}\rangle_{3}\subset\Q$
and therefore $\psi_{Q}\in\mathcal{C}_{c}^{\infty}(\mathring{\Q}).$
Now, with $n_{\alpha,x}\coloneqq\left\lfloor \log_{2}\left(x\right)/\alpha\right\rfloor $
for each $Q\in E_{x}^{D}$ we have
\[
\int\left|\nabla\psi_{Q}\right|^{2}\d\Lambda/\int\psi_{Q}^{2}\d\nu\leq1/\J\left(Q\right)\leq x.
\]
Hence, the $\left(\psi_{Q}:Q\in E_{x}^{D}\right)\eqqcolon\left(f_{i}:i=1,\ldots,e_{x}^{D}\right)$
are mutually orthogonal both in $L_{\nu}^{2}$ and in $H^{D},$ and
we obtain that $\spann\left(f_{i}:i=1,\ldots,e_{x}^{D}\right)$ is
an $e_{x}^{D}$-dimensional subspace of $H^{D}$. Hence, we deduce
from \prettyref{cor:LowerBoundsOnN_nuViaSubspaces} that $\mathcal{N}_{\alpha,\J}^{D}\left(n_{\alpha,x}\right)5^{-d}-1\leq e_{x}^{D}\leq N^{D}\left(x\right).$

In the Neumann case, we proceed similarly. For fixed $\alpha>0$ set
$n_{\alpha,x}=\left\lfloor \log_{2}\left(x\right)/\alpha\right\rfloor $
and write $E_{n_{\alpha,x}}=\left\{ Q_{1},\ldots,Q_{\card\left(E_{n_{\alpha,x}}\right)}\right\} .$
For each $i=1,\dots,\left\lfloor e_{n_{\alpha,x}}/2\right\rfloor \eqqcolon N_{\alpha,x}$,
we define $f_{i}\coloneqq\mathfrak{R}_{\Q}\left(a_{2i-1}\psi_{Q_{2i-1}}+a_{2i}\psi_{Q_{2i}}\right)$
$\in\mathcal{C}_{b}^{\infty}\left(\overline{\Q}\right)$, where we
choose $\left(a_{2i-1},a_{2i}\right)\in\R^{2}\setminus\left\{ (0,0)\right\} $
such that $\int_{\Q}f_{i}\d\Lambda=0$. Using $\langle\mathring{Q}_{j}\rangle_{3}\cap\langle\mathring{Q}_{k}\rangle_{3}=\emptyset$
for $j\neq k$, the properties of mediants and \prettyref{lem:equivalenzNorm},
we obtain
\begin{align*}
\frac{\left\langle f_{i},f_{i}\right\rangle _{H^{N}}}{\int f_{i}^{2}\d\nu}\leq & \frac{\int\left|\nabla f_{i}\right|^{2}\d\Lambda}{D_{\Q}\int f_{i}^{2}\d\nu}\leq\frac{1}{D_{\Q}}\frac{a_{1}^{2}\int\left(\nabla\psi_{Q_{2i-1}}\right)^{2}\d\Lambda+a_{2}^{2}\int\left(\nabla\psi_{Q_{2i}}\right)^{2}\d\Lambda}{a_{1}^{2}\int\psi_{Q_{2i-1}}^{2}\d\nu+a_{2}^{2}\int\psi_{Q_{2i}}^{2}\d\nu}\\
 & \leq\frac{1}{D_{\Q}}\max\left\{ \frac{\int\left(\nabla\psi_{Q_{2i}}\right)^{2}\d\Lambda}{\int\psi_{Q_{2i}}^{2}\d\nu},\frac{\int\left(\nabla\psi_{Q_{2i-1}}\right)^{2}\d\Lambda}{\int\psi_{Q_{2i-1}}^{2}\d\nu}\right\} \leq\frac{1}{D_{\Q}}\max\left\{ \frac{1}{\J(Q_{2i-1})},\frac{1}{\J(Q_{2i})}\right\} \leq\frac{x}{D_{\Q}}.
\end{align*}
Hence, the $f_{i}$ are mutually orthogonal in $H^{N}$ and also in
$L_{\nu}^{2}$, we obtain that $\spann\left(f_{1},\ldots,f_{N_{\alpha,x}}\right)$
is a $N_{\alpha,x}$-dimensional subspace of $H^{N}$. Again, an application
of \prettyref{cor:LowerBoundsOnN_nuViaSubspaces} gives the second
inequality.

\end{proof}
\begin{proof}
[Proof of \prettyref{thm:IntroGeneralPrincipleLowerBound}]From
the above lemma, we have $\mathcal{N}_{\alpha,\J}^{D}\left(n_{\alpha,x}\right)/5^{d}-1\leq N^{D}\left(x\right)$.
Consequently, we conclude
\begin{align*}
\liminf_{x\rightarrow\infty}\frac{\log\left(N^{D}(x)\right)}{\log(x)} & \geq\liminf_{n\rightarrow\infty}\frac{\log^{+}\left(\mathcal{N}_{\alpha,\J}^{D}\left(n\right)\right)}{\alpha\log\left(2^{n}\right)}=\frac{\underline{F}_{\J}^{D}\left(\alpha\right)}{\alpha},
\end{align*}
taking the supremum over all $\alpha>0$ gives $\underline{F}_{\J}^{D}\leq\underline{s}^{D}.$
Furthermore, for $x_{\alpha,n}\coloneqq2^{\alpha n}$ with $n\in\N$,
we see that
\begin{align*}
\overline{s}^{D} & \geq\limsup_{n\rightarrow\infty}\frac{\log\left(N^{D}\left(x_{\alpha,n}\right)\right)}{\log\left(x_{\alpha,n}\right)}\geq\limsup_{n\rightarrow\infty}\frac{\log^{+}\left(\mathcal{N}_{\alpha.\J}^{D}\left(n\right)\right)}{\log\left(2^{n}\right)\alpha},
\end{align*}
implying $\overline{F}_{\J}^{D}\leq\overline{s}^{D}$. In the Neumann
case, using $\mathcal{N}_{\alpha,\J}^{N}\left(x_{\alpha,n}\right)/\left(2\cdot5^{d}\right)-2\leq N^{N}\left(x/D_{\Q}\right)$,
we obtain in the same ways as in the Dirichlet case that $\underline{F}_{\J}^{N}\leq\underline{s}^{N}$
and $\overline{F}_{\J}^{N}\leq\overline{s}^{N}$.
\end{proof}

\subsection{Lower bound on the embedding constant}

In the following we assume \prettyref{eq:assumption infinity dim},
that is $\dim_{\infty}(\nu)>d-2$. We need a slight modification of
$\J_{\nu}$ for the case $d=2.$ We define $\underline{\J}_{\nu}\left(Q\right)\coloneqq\sup_{Q'\in\mathcal{D}\left(Q\right)}\nu\left(Q'\right)\Lambda\left(Q'\right)^{2/d-1}$
for $Q\in\mathcal{D}$. Hence, in the case $d=2,$ we have $\underline{\J}_{\nu}\left(Q\right)=\nu\left(Q\right).$
Clearly, we again have $\dim_{\infty}(\underline{\J}_{\nu})>d-2$,
$\GL_{\J_{\nu}}^{D/N}=\GL_{\underline{\J}_{\nu}}^{D/N}$ by \prettyref{prop:Lq=00003DGLFallsD=00003D2}
and for $d>2$, $\underline{F}_{\underline{\J}_{\nu}}^{D/N}=\underline{F}_{\J_{\nu}}^{D/N}$
and $\overline{F}_{\underline{\J}_{\nu}}^{D/N}=\overline{F}_{\J_{\nu}}^{D/N}$.
The case $d=2$ is covered by the following lemma.
\begin{lem}
\label{lem:SpecialJ_nu}In the case $d=2$, we have $\underline{F}_{\underline{\J}_{\nu}}^{D/N}=\underline{F}_{\J_{\nu}}^{D/N}\:\text{and}\:\overline{F}_{\underline{\J}_{\nu}}^{D/N}=\overline{F}_{\J_{\nu}}^{D/N}.$
\end{lem}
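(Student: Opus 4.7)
The plan is to establish a two-sided pointwise comparison between $\J_\nu(Q)$ and $\underline{\J}_\nu(Q)$ on $\mathcal{D}_n^N$ for $n$ large, and then convert it into the desired identities for $\overline F$ and $\underline F$. When $d=2$ the exponent $2/d-1$ vanishes, so monotonicity of $\nu$ gives $\underline{\J}_\nu(Q) = \nu(Q)$, while $\J_\nu(Q) = \sup_{\widetilde Q \in \mathcal{D}(Q)} \nu(\widetilde Q)\,|\log \Lambda(\widetilde Q)|$ differs only by logarithmic factors; such factors are sub-exponential and should not affect multifractal rates.

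For the easy direction I would take $\widetilde Q = Q$ in the supremum defining $\J_\nu$, obtaining $\J_\nu(Q) \geq \nu(Q)\,nd\log 2 \geq \nu(Q) = \underline{\J}_\nu(Q)$ for every $Q \in \mathcal{D}_n^N$ with $n \geq 1$. This immediately yields $M_{\alpha,\underline{\J}_\nu}^{D/N}(n) \subset M_{\alpha,\J_\nu}^{D/N}(n)$, hence $\mathcal{N}_{\alpha,\underline{\J}_\nu}^{D/N}(n) \leq \mathcal{N}_{\alpha,\J_\nu}^{D/N}(n)$ for every $\alpha > 0$. Taking $\limsup$ (respectively $\liminf$) in $n$ and then a supremum over $\alpha$ gives $\overline F_{\underline{\J}_\nu}^{D/N} \leq \overline F_{\J_\nu}^{D/N}$ and $\underline F_{\underline{\J}_\nu}^{D/N} \leq \underline F_{\J_\nu}^{D/N}$.

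The main obstacle is the reverse comparison. Here I would exploit $\dim_\infty(\nu) > 0$ (which holds under \prettyref{eq:assumption infinity dim}): fixing $\epsilon \in (0,\dim_\infty(\nu))$, for all $m$ sufficiently large every $\widetilde Q \in \mathcal{D}_m^N$ with $\nu(\widetilde Q) > 0$ satisfies $m \leq \epsilon^{-1}\log_2(1/\nu(\widetilde Q))$, so that $\nu(\widetilde Q)|\log \Lambda(\widetilde Q)| \leq (d/\epsilon)\,\nu(\widetilde Q)\log(1/\nu(\widetilde Q))$. The elementary inequality $x\log(1/x) \leq C_\delta x^{1-\delta}$, valid for $x \in (0,1]$ and any $\delta \in (0,1)$, then yields $\nu(\widetilde Q)|\log \Lambda(\widetilde Q)| \leq C_\delta' \nu(\widetilde Q)^{1-\delta} \leq C_\delta' \nu(Q)^{1-\delta}$ by monotonicity of $\nu$. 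Taking the supremum over $\widetilde Q \in \mathcal{D}(Q)$ gives $\J_\nu(Q) \leq C_\delta' \underline{\J}_\nu(Q)^{1-\delta}$ uniformly for $Q \in \mathcal{D}_n^N$ with $\nu(Q)>0$, for all $n$ large enough that every descendant already lies in the asymptotic regime.

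From this pointwise bound I would deduce, for any $\eta > 0$ and $n$ sufficiently large, the inclusion $M_{\alpha,\J_\nu}^{D/N}(n) \subset M_{\alpha'',\underline{\J}_\nu}^{D/N}(n)$ with $\alpha'' \coloneqq \alpha/(1-\delta) + \eta$, and hence $\mathcal{N}_{\alpha,\J_\nu}^{D/N}(n) \leq \mathcal{N}_{\alpha'',\underline{\J}_\nu}^{D/N}(n)$. Writing
\[
\frac{\log^+ \mathcal{N}_{\alpha,\J_\nu}^{D/N}(n)}{\alpha\log 2^n} \leq \frac{\alpha''}{\alpha}\cdot \frac{\log^+ \mathcal{N}_{\alpha'',\underline{\J}_\nu}^{D/N}(n)}{\alpha''\log 2^n}
\]
and passing to $\limsup$ (respectively $\liminf$) in $n$ gives $\overline F_{\J_\nu}^{D/N}(\alpha)/\alpha \leq (\alpha''/\alpha)\,\overline F_{\underline{\J}_\nu}^{D/N}$, and the analogue for $\underline F$. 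Letting $\delta, \eta \to 0$ so that $\alpha''/\alpha \to 1$ and finally taking the supremum over $\alpha > 0$ yields $\overline F_{\J_\nu}^{D/N} \leq \overline F_{\underline{\J}_\nu}^{D/N}$ and $\underline F_{\J_\nu}^{D/N} \leq \underline F_{\underline{\J}_\nu}^{D/N}$, completing both identities.
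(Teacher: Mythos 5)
Your argument is correct and follows essentially the same route as the paper: the trivial inclusion of level sets from $|\log\Lambda(Q)|\geq 1$, and for the reverse direction the use of $\dim_{\infty}(\nu)>0$ to absorb the logarithmic factor into an arbitrarily small power of $\nu(\widetilde Q)$, which only perturbs $\alpha$ by a factor tending to $1$. Your version is in fact somewhat more carefully written than the paper's, in particular in making explicit the passage from the pointwise bound $\J_{\nu}(Q)\leq C'_{\delta}\,\underline{\J}_{\nu}(Q)^{1-\delta}$ to the comparison of the optimised coarse multifractal dimensions.
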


\begin{proof}
We always have 
\[
\left\{ Q\in\mathcal{D}_{n}^{D/N}:\sup_{Q'\in\mathcal{D}(Q)}\nu\left(Q'\right)\left|\log\left(\Lambda\left(Q'\right)\right)\right|\geq2^{-\alpha n}\right\} \supset\left\{ Q\in\mathcal{D}_{n}^{D/N}:\nu(Q)\geq2^{-\alpha n}\right\} 
\]
and, using $\dim_{\infty}(\nu)>d-2,$ we obtain for every $1<\delta$
and $n\in\N$ large enough 
\[
\nu\left(Q\right)\left|\log\left(\Lambda(Q)\right)\right|\leq\nu\left(Q\right)^{1/\delta},Q\in\mathcal{D}_{n}^{D/N}.
\]
Indeed, for $d-2<s<\dim_{\infty}(\nu)$, we have for all $n$ large
and $Q\in\mathcal{D}_{n}^{D/N}$ with $\nu(Q)>0$ that $\nu(Q)\leq2^{-sn}.$
Further, for fixed $0<\varepsilon<1$ $n$ large we have $dn\log(2)\leq2^{\varepsilon sn}\leq\nu(Q)^{-\varepsilon}.$
Hence, for all $Q\in\mathcal{D}_{n}^{D/N}$, we obtain $\nu(Q)|\log(\Lambda(Q))|\leq\nu(Q)^{1-\varepsilon}.$
This leads to
\[
\left\{ G\in\mathcal{D}_{n}^{D/N}:\sup_{Q'\in\mathcal{D}(Q)}\nu(Q')\left|\log\left(\Lambda(Q')\right)\right|\geq2^{-\alpha n}\right\} \subset\left\{ Q\in\mathcal{D}_{n}^{D/N}:\nu(Q)\geq2^{-\alpha\delta n}\right\} .
\]
Hence, the claim follows.
\end{proof}
\begin{prop}
\label{prop:LowerBoundFunctions}There exists a constant $K>0$ such
that for every $Q\in\mathcal{D}$ with $\underline{\J}_{\nu}\left(Q\right)>0$
there exists a function $\psi_{Q}\in\mathcal{C}_{c}^{\infty}(\R^{d})$
with support contained in $\langle\mathring{Q}\rangle_{3}$ and $\left\Vert \psi_{Q}\right\Vert _{L_{\nu}^{2}}>0$
such that
\[
\left\Vert \psi_{Q}\right\Vert _{L_{\nu}^{2}}^{2}\geq K\underline{\J}_{\nu}\left(Q\right)\left\Vert \nabla\psi_{Q}\right\Vert _{L_{\Lambda}^{2}\left(\mathbb{R}^{d}\right)}^{2}.
\]
\end{prop}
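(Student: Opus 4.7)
The strategy is to apply the mollifier estimate in \prettyref{lem: MOillifierFunction} to a sub-cube of $Q$ that attains the supremum defining $\underline{\J}_\nu(Q)$.

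First I would verify that the supremum in $\underline{\J}_\nu(Q) = \sup_{Q' \in \mathcal{D}(Q)} \nu(Q')\Lambda(Q')^{2/d-1}$ is a maximum. For $d=2$ this is trivial since then $\underline{\J}_\nu(Q) = \nu(Q)$ (take $Q^* = Q$). For $d > 2$, the standing assumption $\dim_\infty(\nu) > d-2$ gives an $\alpha > d-2$ such that $\nu(Q') \leq 2^{-\alpha n}$ for every $Q' \in \mathcal{D}_n^N$ with $n$ sufficiently large. Thus $\nu(Q')\Lambda(Q')^{2/d-1} \leq 2^{-n(\alpha - d + 2)} \to 0$, so only finitely many dyadic levels contribute and a maximizer $Q^* \in \mathcal{D}(Q)$ exists, with $\nu(Q^*) > 0$.

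Next I would define $\psi_Q := \varphi_{\langle Q^* \rangle_3,\, 3}$ using the Friedrichs-type mollifier constructed in \prettyref{lem:smooth_indicator} applied to the cube $\langle Q^* \rangle_3$ with dilation factor $m=3$. This function lies in $\mathcal{C}_c^\infty(\R^d)$, takes values in $[0,1]$, equals $1$ on $Q^*$ (the centred sub-cube of $\langle Q^* \rangle_3$ with third the side length), and is supported in $\mathring{\langle Q^* \rangle_3}$. An elementary computation comparing centres and side lengths shows that $Q^* \subset Q$ forces $\langle Q^* \rangle_3 \subset \langle Q \rangle_3$, so the support is contained in $\mathring{\langle Q \rangle_3}$ as required. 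Moreover, $\|\psi_Q\|_{L_\nu^2}^2 \geq \nu(Q^*) > 0$.

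Finally I would invoke \prettyref{lem: MOillifierFunction} directly with the cube $\langle Q^* \rangle_3$ and $m=3$. Using the identity $\langle \langle Q^* \rangle_3 \rangle_{1/3} = Q^*$ recalled just before that lemma, one obtains a constant $C_d > 0$ depending only on $d$ such that
\[
\frac{\int |\nabla \psi_Q|^2 \, \d\Lambda}{\int \psi_Q^2 \, \d\nu} \leq C_d \, \frac{\Lambda(Q^*)^{1-2/d}}{\nu(Q^*)} = \frac{C_d}{\nu(Q^*)\,\Lambda(Q^*)^{2/d - 1}} = \frac{C_d}{\underline{\J}_\nu(Q)}.
\]
Rearranging yields the claimed inequality with $K = 1/C_d$, a constant independent of $Q$.

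I do not foresee a significant obstacle: the only delicate point is the attainment of the supremum, which is handled cleanly by \prettyref{eq:assumption infinity dim}; the remaining steps are a direct application of the scaling constants already packaged in Lemmas \ref{lem:smooth_indicator} and \ref{lem: MOillifierFunction}.
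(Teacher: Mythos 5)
Your proof is correct and follows essentially the same route as the paper: identify a maximizing sub-cube $C_Q\in\mathcal{D}(Q)$ (which exists because $\dim_\infty(\underline{\J}_\nu)>0$ forces the quantities $\nu(Q')\Lambda(Q')^{2/d-1}$ to vanish along deep dyadic levels), set $\psi_Q=\varphi_{\langle C_Q\rangle_3,3}$ from Lemma \ref{lem:smooth_indicator}, and apply Lemma \ref{lem: MOillifierFunction} with $m=3$ together with $\langle\langle C_Q\rangle_3\rangle_{1/3}=C_Q$. Your final algebra is in fact cleaner than the paper's, whose last displayed equality contains a typo (the factor $\underline{\J}_\nu(Q)$ should appear in the denominator, exactly as you write it).
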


\begin{proof}
Since $\dim_{\infty}(\underline{\J}_{\nu})>0$, it follows that for
each $Q\in\mathcal{D}$ there exists $Q_{0}\in\mathcal{D}\left(Q\right)$
such that $\underline{\J}_{\nu}\left(Q\right)=\nu(Q_{0})\Lambda(Q_{0})^{2/d-1}$.
Now, choose $\psi_{Q}\coloneqq\varphi_{\left\langle Q_{0}\right\rangle _{3},3}$
as in \prettyref{lem: MOillifierFunction}. Then $\psi_{Q}\cdot\1_{Q_{0}}=\1_{Q_{0}}$,
$\supp(\psi_{Q})\subset\left\langle \mathring{Q}_{0}\right\rangle _{3}\subset\langle\mathring{Q}\rangle_{3}$
and 
\begin{align*}
\frac{\int\left|\nabla\psi_{Q}\right|^{2}\d\Lambda}{\int\left|\psi_{Q}\right|^{2}\d\nu} & \leq C\frac{3^{1-2/d}}{4}\frac{\Lambda\left(\left\langle \langle Q_{0}\rangle_{3}\right\rangle _{1/3}\right)^{1-2/d}}{\nu\left(\left\langle \langle Q_{0}\rangle_{3}\right\rangle _{1/3}\right)}=C\frac{3^{1-2/d}}{4}\frac{\Lambda\left(Q_{0}\right)^{1-2/d}}{\nu\left(Q_{0}\right)}=C\frac{3^{1-2/d}}{4}\underline{\J}_{\nu}\left(Q\right)^{-1}.
\end{align*}
\end{proof}
\begin{prop}
\label{prop:LowerBoundD}For fixed $\alpha>0$ and for $x>0$ large,
we have 
\[
\mathcal{N}_{\alpha,\underline{\J}_{\nu}}^{D}\left(n_{\alpha,x}\right)5^{-d}-1\leq N^{D}\left(xK\right)
\]
with $n_{\alpha,x}\coloneqq\left\lfloor \log_{2}\left(x\right)/\alpha\right\rfloor $.
In particular, $\underline{F}_{\J_{\nu}}^{D}\leq\underline{s}^{D}$
and $\overline{F}_{\J_{\nu}}^{D}\leq\overline{s}^{D}$.
\end{prop}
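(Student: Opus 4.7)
The plan is to mimic the Dirichlet part of the proof of \prettyref{lem:GeneralPrincipleLowerBound}, using the explicit test functions supplied by \prettyref{prop:LowerBoundFunctions} in place of generic $\psi_Q$, and exploiting the fact that for $Q\in\mathcal{D}_n^D$ the enlarged cube $\left\langle \mathring{Q}\right\rangle _3$ still lies inside $\mathring{\Q}$.

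First I would perform the same packing argument as in \prettyref{lem:GeneralPrincipleLowerBound}: given $n_{\alpha,x}=\lfloor\log_2(x)/\alpha\rfloor$, construct by a greedy induction a subfamily $E_{n_{\alpha,x}}\subset M_{\alpha,\underline{\J}_\nu}^{D}(n_{\alpha,x})$ of cardinality at least $\mathcal{N}_{\alpha,\underline{\J}_\nu}^{D}(n_{\alpha,x})/5^d$ such that for distinct $Q,Q'\in E_{n_{\alpha,x}}$ we have $\left\langle \mathring{Q}\right\rangle _{3}\cap\left\langle \mathring{Q'}\right\rangle _{3}=\emptyset$. Each of these cubes satisfies $\overline{Q}\cap\partial\Q=\emptyset$ by definition of $\mathcal{D}_n^D$; since $Q$ has side length $2^{-n_{\alpha,x}}$ and the dyadic grid forces $\dist(Q,\partial\Q)\geq 2^{-n_{\alpha,x}}$, the inflated cube $\left\langle \mathring{Q}\right\rangle _{3}$ (which extends $Q$ by $2^{-n_{\alpha,x}}$ on each side) is contained in $\mathring{\Q}$.

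Next I would invoke \prettyref{prop:LowerBoundFunctions} to associate to every $Q\in E_{n_{\alpha,x}}$ a function $\psi_Q\in\mathcal{C}_c^\infty(\R^d)$ with $\supp\psi_Q\subset\left\langle \mathring{Q}\right\rangle _{3}\subset\mathring{\Q}$ and
\[
\frac{\int|\nabla\psi_Q|^2\d\Lambda}{\int\psi_Q^2\d\nu}\leq\frac{1}{K\underline{\J}_\nu(Q)}\leq\frac{2^{\alpha n_{\alpha,x}}}{K}\leq\frac{x}{K}.
\]
By the disjointness of supports, the family $(\psi_Q)_{Q\in E_{n_{\alpha,x}}}$ is mutually orthogonal in both $H_0^1$ and $L_\nu^2$, hence it spans a subspace of $H_0^1$ of dimension $e_{n_{\alpha,x}}\geq\mathcal{N}_{\alpha,\underline{\J}_\nu}^{D}(n_{\alpha,x})/5^d$. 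A standard convex-combination estimate on this span shows that the Rayleigh–Ritz quotient $R_{H_0^1}$ is bounded above by $x/K$ on the whole subspace, and the min-max principle (\prettyref{prop:dom_vs_H01 Minmax}) therefore yields
\[
\frac{\mathcal{N}_{\alpha,\underline{\J}_\nu}^{D}(n_{\alpha,x})}{5^d}-1\leq N^{D}(xK).
\]
Combined with \prettyref{lem:SpecialJ_nu} (which handles the switch from $\underline{\J}_\nu$ to $\J_\nu$ in $d=2$ and is automatic for $d>2$), this gives the main inequality.

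Finally, passing from the counting inequality to the spectral-dimension bounds is routine and parallels the last paragraphs of the proof of \prettyref{thm:IntroGeneralPrincipleLowerBound}: taking $\liminf_{x\to\infty}$ of $\log N^D(xK)/\log(xK)$ and using $\log N^D(xK)\geq\log^{+}\mathcal{N}_{\alpha,\underline{\J}_\nu}^{D}(n_{\alpha,x})/5^d\cdot(1+o(1))$ gives $\underline{F}_{\underline{\J}_\nu}^D(\alpha)/\alpha\leq\underline{s}^D$, while evaluating along $x_{\alpha,n}\coloneqq 2^{\alpha n}$ yields $\overline{F}_{\underline{\J}_\nu}^D(\alpha)/\alpha\leq\overline{s}^D$. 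Taking the supremum over $\alpha>0$ and invoking the identifications $\underline{F}_{\underline{\J}_\nu}^D=\underline{F}_{\J_\nu}^D$ and $\overline{F}_{\underline{\J}_\nu}^D=\overline{F}_{\J_\nu}^D$ finishes the proof. The only subtle point is the verification that $\left\langle \mathring{Q}\right\rangle _{3}\subset\mathring{\Q}$ for $Q\in\mathcal{D}_n^D$, which is precisely the reason why the set $\mathcal{D}_n^D$ was defined as it was and is the main reason the Dirichlet case closes cleanly.
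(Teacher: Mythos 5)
Your proposal is correct and follows essentially the same route as the paper, whose proof simply cites \prettyref{prop:LowerBoundFunctions}, \prettyref{lem:GeneralPrincipleLowerBound} and \prettyref{lem:SpecialJ_nu}; you merely unpack the packing argument and the min-max step of \prettyref{lem:GeneralPrincipleLowerBound} explicitly. The only (harmless) discrepancy is the bookkeeping of the constant $K$, where the Rayleigh bound naturally yields $N^{D}(x/K)$ rather than $N^{D}(xK)$, a renaming that does not affect the spectral-dimension conclusions.
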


\begin{proof}
This follows from \prettyref{prop:LowerBoundFunctions}, \prettyref{lem:GeneralPrincipleLowerBound},
and \prettyref{lem:SpecialJ_nu}.
\end{proof}
In the same way we obtain the following proposition for the Neumann
case.
\begin{prop}
\label{prop:lowerBound N} For fixed $\alpha>0$, we have for $x>0$
large 
\[
\mathcal{N}_{\alpha,\underline{\J}_{\nu}}^{N}\left(n_{\alpha,x}\right)5^{-d}/2-1\leq N^{N}\left(xK/D_{\Q}\right)
\]
with $n_{\alpha,x}\coloneqq\left\lfloor \log_{2}\left(x\right)/\alpha\right\rfloor $.
In particular, $\underline{F}_{\J_{\nu}}^{N}\leq\underline{s}^{N}$
and $\overline{F}_{\J_{\nu}}^{N}\leq\overline{s}^{N}$.
\end{prop}

\section{Proof of the remaining main results\label{sec:MainProofs}}

This chapter is devoted to the proofs of the other main results. To
break up the results of \prettyref{thm:MainChain_of_Inequalities+Regularity},
we start with the following proposition and its proof.
\begin{prop}
\label{prop:CaseDirchlet=00003DNeumann} We have $\overline{s}^{D}\leq q_{\J_{\nu}}^{N}=\overline{F}_{\J_{\nu}}^{N}=\overline{h}_{\J_{\nu}}=\overline{s}^{N}$
and $\underline{s}^{D}\leq\underline{s}^{N}\leq\underline{h}$.
\end{prop}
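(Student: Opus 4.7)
The plan is simply to assemble the chain from results already established in the excerpt, since each individual inequality has been proved as a preliminary step.

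First I would note that the comparison $\overline{s}^{D}\le\overline{s}^{N}$ and $\underline{s}^{D}\le\underline{s}^{N}$ follows at once from \prettyref{lem:Neumann>Dirichlet}, which gives $\lambda_{i}(\E^{N})\le c\,\lambda_{i}(\E^{D})$ for all $i\in\N$; hence $N^{N}(x/c)\le N^{D}(x)$, and passing to the exponent of growth preserves the inequality.

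Next I would obtain the identities $q_{\J_{\nu}}^{N}=\overline{F}_{\J_{\nu}}^{N}=\overline{h}_{\J_{\nu}}$ by directly applying \prettyref{cor:EqualityF_jandH_j} to $\J=\J_{\nu}$, whose hypotheses are verified since $\dim_{\infty}(\nu)>d-2$ implies $\dim_{\infty}(\J_{\nu})>0$ by \prettyref{fact:Properties =00005CGL_=00005Cnu}, and because \prettyref{lem:SpectralFuncitonINdependent} shows $\J_{\nu}$ satisfies the standing assumptions of \prettyref{sec:Coarse-multifractal-analysis} (uniform vanishing, local almost subadditivity, and $\tau_{\J_{\nu},n}^{N}(1)\geq d-2+\beta_{\nu,n}^{N}(1)$ bounded below).

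The remaining step is to sandwich $\overline{s}^{N}$ by the common value $q_{\J_{\nu}}^{N}$. The upper bound $\overline{s}^{N}\le q_{\J_{\nu}}^{N}$ is precisely \prettyref{cor:UpperBoundSpectralDim}, derived from the Maz'ya--Preobrazenski\u{\i}/Adams embedding estimates via \prettyref{lem: EstimatewithequivalentNorm} and the partition-entropy bound of \prettyref{prop:GeneralUpperBoundViaJ}. For the matching lower bound $\overline{F}_{\J_{\nu}}^{N}\le\overline{s}^{N}$, I would invoke \prettyref{prop:lowerBound N}, obtained by constructing mutually orthogonal test functions from the mollifiers of \prettyref{prop:LowerBoundFunctions} and applying the min-max principle \prettyref{prop:dom_vs_H01 Minmax}. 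Chaining these gives
\[
q_{\J_{\nu}}^{N}=\overline{F}_{\J_{\nu}}^{N}\le\overline{s}^{N}\le q_{\J_{\nu}}^{N},
\]
forcing all the equalities claimed.

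Finally, for $\underline{s}^{N}\le\underline{h}$, I would apply the second assertion of \prettyref{cor:UpperBoundSpectralDim}, which says $\underline{s}^{N}\le\lim_{t\downarrow2}\underline{h}_{\J_{\nu,t(2/d-1)/2,1}}$; this limit is exactly $\underline{h}$ by the shorthand introduced before the statement of \prettyref{thm:MainChain_of_Inequalities+Regularity}. There is no serious obstacle here since all the heavy lifting (the Maz'ya embedding to secure the upper bound, the adaptive partition algorithm to pass from $\overline{h}_{\J_{\nu}}$ to $q_{\J_{\nu}}^{N}$, and the coarse multifractal construction of trial functions for the lower bound) has already been performed; the proof is a bookkeeping exercise that collects these strands. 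The only mild subtlety is ensuring that the parameter limit $t\downarrow2$ is compatible with the identification $\lim_{t\downarrow2}q_{\J_{\nu,t(2/d-1)/2,1}}^{N}=q_{\J_{\nu}}^{N}$, but this was already established in \prettyref{prop:spectralDimParametert}.
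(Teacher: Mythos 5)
Your proposal is correct and follows essentially the same route as the paper: the lower bound comes from \prettyref{prop:lowerBound N} together with the identity $q_{\J_{\nu}}^{N}=\overline{F}_{\J_{\nu}}^{N}=\overline{h}_{\J_{\nu}}$ (\prettyref{prop:LowerBoundUpperSpecDim}/\prettyref{cor:EqualityF_jandH_j}), the upper bound and the $\underline{h}$ statement from \prettyref{cor:UpperBoundSpectralDim}. One small transcription slip: from $\lambda_{i}(\E^{N})\leq c\,\lambda_{i}(\E^{D})$ the correct counting-function comparison is $N^{D}(x)\leq N^{N}(cx)$ rather than $N^{N}(x/c)\leq N^{D}(x)$, which still yields $\overline{s}^{D}\leq\overline{s}^{N}$ and $\underline{s}^{D}\leq\underline{s}^{N}$.
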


\begin{proof}
From \prettyref{prop:lowerBound N} and \prettyref{eq:GeneralResultOnPartition_Entropy}
applied to $\J=\J_{\nu}$, we obtain $q_{\J_{\nu}}^{N}=\overline{F}_{\J_{\nu}}^{N}\leq\overline{s}^{N}.$
\prettyref{cor:UpperBoundSpectralDim} yields $\underline{s}^{N}\leq\overline{s}^{N}\leq\lim_{t\downarrow2}\overline{h}_{\J_{\nu,(2/d-1),2/t}}\leq q_{\J_{\nu}}^{N}$
and $\overline{F}_{\J_{\nu}}^{N}=\overline{h}_{\J_{\nu}}=q_{\J_{\nu}}^{N}$which
proves the claimed (in)equalities.
\end{proof}
\begin{proof}[Proof of \prettyref{thm:MainChain_of_Inequalities+Regularity}]
 The first and second claim follow \prettyref{prop:CaseDirchlet=00003DNeumann},
\prettyref{prop:lowerBound N} and \prettyref{prop:LowerBoundD}.To
prove the third claim, we note that \prettyref{eq:Condition4InMainThm}
fulfils the assumption of \prettyref{lem:equalityD=00003DNBoundaryMn},
which implies $q_{\J_{\nu}}^{D}=q_{\J_{\nu}}^{N}$.
\end{proof}
\begin{proof}[Proof of \prettyref{thm:LqRegularImpliesRegular}]
 Under assumption that $\nu$ is D/N-PF regular, we obtain from \prettyref{eq:Neumann_Dirichlet_GLreg. implies lower bound},
\prettyref{prop:lowerBound N} and \prettyref{prop:LowerBoundD} that
$\underline{s}^{D/N}\geq\underline{F}^{D/N}=q^{D/N}$ and $\overline{s}^{D}\ensuremath{\leq\overline{s}^{N}}$.
Together with \prettyref{eq:GeneralResultOnPartition_Entropy} and
\prettyref{prop:Neumann>Dirichlet} the claim follows.
\end{proof}
\begin{proof}[Proof of \prettyref{prop:loverbound_by_diff_in_1}]
 This follows from \prettyref{prop:lowerBound N}, \prettyref{prop:LowerBoundD}
and \prettyref{prop:GeneralBound.-1}.
\end{proof}
\begin{proof}[Proof of \prettyref{cor:upper_spectralDimGeneralUpper/lowerBound}]
 For $d>2$, \prettyref{thm:MainChain_of_Inequalities+Regularity}
gives $\overline{s}^{N}=q_{\J_{\nu}}^{N}$, hence the claim follows
from the estimates of $q_{\J_{\nu}}^{N}$ obtained in \prettyref{fact:Properties =00005CGL_=00005Cnu}.
In the case $d=2$ and $\nu(\mathring{\Q})>0$, there exists an open
dyadic cube $Q$ such that $\overline{Q}\subset\mathring{\Q}$, $\nu\left(Q\right)>0$
and $\dim_{\infty}(\nu|_{Q})>d-2=0.$ Hence, we obtain 
\[
1=q_{\J_{\nu|_{Q}}}^{N}\leq\overline{F}_{\J_{\nu|_{Q}}}^{N}=\overline{F}_{\J_{\nu|_{Q}}}^{D}\leq\overline{F}_{\J_{\nu}}^{D}=q_{\J_{\nu}}^{D}\leq\overline{s}^{D}\leq\overline{s}^{N}=1.
\]
\end{proof}
\begin{proof}[Proof of \prettyref{prop:spec_absolutely_cont}]
 We immediately obtain from \prettyref{thm:LqRegularImpliesRegular}
and \prettyref{prop:=00005CGl_for Absoluterly continuous} that $s^{D}=s^{N}=d/2.$
\end{proof}
\begin{proof}[Proof of \prettyref{prop:q-1 implies sN=00003Dd/2}]
 Suppose $\overline{s}^{N}=d/2$. Then by \prettyref{thm:MainChain_of_Inequalities+Regularity},
we have $q_{\J_{\nu}}^{N}=d/2$. Moreover, for all $0\leq q\leq1$,
we have $d-2q=\beta^{N}(q)+(d-2)q\leq\GL^{N}\left(q\right)$. The
convexity of $\GL^{N}$ yields $\GL^{N}(q)\leq d-2q$ for all $q\in[0,d/2]$,
Further, the convexity of $\beta^{N}$ gives for all $q\geq1$ that
$d-2q\leq\beta^{N}(q)+(d-2)q.$ This proves the first claim. For the
second claim, assume $\GL^{N}\left(q\right)=d-2q$ for some $q>d/2$.
Again, for all $q'\in[0,q],$ we deduce $d-2q'=\beta^{N}(q')+(d-2)q'\leq\GL^{N}\left(q'\right)\leq d-2q'$.
In particular, $\GL^{N}(d/2)=0$ implying $\overline{s}^{N}=d/2$.
The final assertion follows from the regularity result \prettyref{thm:LqRegularImpliesRegular}.
\end{proof}
\begin{proof}[Proof of \prettyref{prop:Ahlfors-David-Regular}]
 Let $\nu$ be a finite $\alpha$-Ahlfors–David regular Borel measure
with $\alpha\in\left(d-2,d\right]$, $d>2$ and $\nu\left(\mathring{\Q}\right)>0$.
Then for some fixed $t<2\alpha/\left(d-2\right)$, appropriate $c>0$
and every $Q\in\mathcal{D}_{n}$ with $\nu\left(Q\right)>0$ we have
$\nu\left(\left\langle Q\right\rangle _{3}\right)\geq c^{-1}2^{-n\alpha}$,
$\nu\left(Q\right)\leq c2^{-n\alpha}$ and therefore
\[
\J_{\nu}\left(\left\langle Q\right\rangle _{3}\right)\geq c^{-1}2^{-n\left(\ensuremath{\alpha}+2-d\right)}\:\text{ and }\;\J_{t}''\left(Q\right)\coloneqq\nu\left(Q\right)^{1-2/t}\J_{\nu,2/d-1,2/t}\left(Q\right)\leq c^{2}2^{-n\left(\alpha-d+2\right)}<c'2^{-n\left(\alpha-d+2\right)}
\]
with $c'>c^{2}$. Indeed, since $\nu\left(\mathring{\Q}\right)>0$
we find an element $E\in\mathcal{D}$ with $\overline{E}\subset\mathring{\Q}$
and $\nu\left(E\right)>0$. Then, on the one hand, for $n\in\N$ large
enough, we have
\begin{align*}
\card\left\{ Q\in\mathcal{D}_{n}^{D}:\J_{\nu}\left(\left\langle Q\right\rangle _{3}\right)\geq c^{-1}2^{-n\left(\alpha+2-d\right)}\right\}  & \geq\card\left\{ Q\in\mathcal{D}_{n}^{D}:\nu\left(Q\right)>0\right\} \geq\frac{2^{n\alpha}}{c}\sum_{Q\in\mathcal{D}\left(E\right)\cap\mathcal{D}_{n}:\nu\left(Q\right)>0}\frac{c}{2^{n\alpha}}\\
 & \geq\frac{2^{n\alpha}}{c}\sum_{Q\in\mathcal{D}\left(E\right)\cap\mathcal{D}_{n}:\nu\left(Q\right)>0}\nu\left(Q\right)=\frac{\nu\left(E\right)}{c}2^{n\alpha}.
\end{align*}
 This estimate combined with \prettyref{lem:GeneralPrincipleLowerBound}
(adopted with $7^{d}$ instead of $5^{d}$) proves the lower asymptotic
bound. On the other hand,
\begin{align*}
\card\left\{ Q\in\mathcal{D}_{n}^{N}:0<\J_{t}''\left(Q\right)<c'2^{-n\left(\alpha+2-d\right)}\right\}  & \leq\card\left\{ Q\in\mathcal{D}_{n}^{N}:\nu\left(Q\right)>0\right\} \leq c2^{n\alpha}\sum_{Q\in\mathcal{D}_{n}^{N}:\nu\left(Q\right)>0}c^{-1}2^{-n\alpha}\\
 & \leq c2^{n\alpha}\sum_{Q\in\mathcal{D}_{n}^{N}:\nu\left(Q\right)>0}\nu\left(\left\langle Q\right\rangle _{3}\right)\leq c2^{d}2^{n\alpha},
\end{align*}
which together with \prettyref{cor: Estimate for H1 elements} and
\prettyref{thm:MainUpperBound_General} proves the upper asymptotic
bound.
\end{proof}
\begin{proof}[Proof of \prettyref{thm:Self-Conform}]
 Let $\nu$ be a self-conformal measure with $\dim_{\infty}(\nu)>d-2$.
Then it follows from \prettyref{prop:selfConformalMeasuresAreNDRegular}
that $\nu$ is D/N-PF-regular and $\GL_{\J_{\nu}}^{D}\left(q_{\J_{\nu}}^{D}\right)=\GL_{\J_{\nu}}^{N}\left(q_{\J_{\nu}}^{D}\right)=0.$
Now, \prettyref{thm:LqRegularImpliesRegular} and \prettyref{thm:MainChain_of_Inequalities+Regularity}
give $s^{D}=s^{N}=q^{N}.$
\end{proof}

\section{Further Examples}

In this last section we present some example illuminating the critical
case with $\dim_{\infty}\left(\nu\right)=d-2$ and the possibility
of non-existing spectral dimension.

\subsection{\label{sec:The-critical-case}Critical cases}

We give three examples of measures $\nu_{i}$ in dimension $d=3$
for the critical case, i.\,e\@. $\dim_{\infty}\left(\nu_{i}\right)=d-2=1$,
$i=1,2,3$. In the first example the Kre\u{\i}n–Feller operator exists
but has no compact resolvent and we therefore have no orthonormal
basis of eigenvectors. In the second example the operator we has a
compact resolvent and we are able to determine the spectral dimension
$s^{D/N}=3/2$. In the third example the operator cannot be defined
via our form approach as there is no continuous embedding of the Sobolev
space into $L_{\nu}^{2}$. 
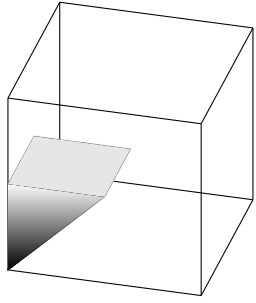
\begin{figure}
\begin{tikzpicture}   
\pgfdeclarelayer{pre main}   
\pgfsetlayers{pre main,main}   
\begin{axis}[rotate around z=10, axis equal image=true,ticks=none,samples=40,   3d box=complete, domain = 0:2, zmin=0, zmax   = 2, xmin=0, xmax   = 2,ymin=0, ymax   = 2,colormap/bone]     
\begin{pgfonlayer}{pre main}       
\addplot3 [surf, color=black!30,opacity=0.8,domain=0:1] {2*(max(x,y)/2)};     
\end{pgfonlayer}    
\addplot3 [name path = xline, domain=0:1,draw = none] (x,0,1); 
\addplot3 [name path = x2line,domain=0:1, draw = none] (x,1,1);   
\addplot3 [name path = yline, domain=0:1,draw = none] (0,y,1);     
\addplot3 [name path = xcurve, domain=0:1,y domain = 0:0, draw = none]       (x, 0, {2*(x/2)});     
\addplot3 [name path = ycurve,domain=0:1, y domain = 0:0, draw = none]       (0, x, {2*(x/2)});       
\addplot [top color = black!1, bottom color = black, draw = none,opacity=0.5]       fill between[of = yline and ycurve, reverse = true];
\addplot [bottom color = black, top color = black!1, draw = none,opacity=0.5]       fill between[of = xcurve and xline]; 
\addplot [bottom color = black!10, top color = black!10, draw = none,opacity=0.2]       fill between[of = x2line and xline];  
\end{axis} 
\end{tikzpicture}

\caption{\label{fig:Cusp-shaped-support} Support of the densities $f_{i}$
of $\nu_{i}$, $i=1,2,3$ with peak singularity concentrated in one
corner for the examples discussed in \prettyref{sec:The-critical-case}.}
\end{figure}

For the following three examples we assume that $\Q\subset\R^{3}$
is aligned to the coordinate axis and the left front lower corner
is the origin. For each example we consider the density functions
on $\Q$ given by $f_{1}\left(x,y,z\right)=z^{-2}$, $f_{2}\left(x,y,z\right)=z^{-2}\left(\log\left(1/z\right)\right)^{-4/3}$
and $f_{3}\left(x,y,z\right)=z^{-2}\log\left(1/z\right)$, resp.,
for $x,y\in\left[0,z\right]$, $0<z<1/2$ and $0$ otherwise (see
\prettyref{fig:Cusp-shaped-support}). Then for $\d\nu_{i}\coloneqq f_{i}\d\Lambda|_{\Q}$,
$i=1,2,3$, we have for $Q_{\ell}\coloneqq\left[0,2^{-\ell}\right]^{3}$
\[
\dim_{\infty}\left(\nu_{i}\right)=\liminf_{\ell}\frac{\log\nu_{i}\left(Q_{\ell}\right)}{-\log2^{\ell}}=\liminf_{\ell}\frac{\log\int_{0}^{2^{-\ell}}z^{2}f_{i}\d z}{-\log2^{\ell}}=1.
\]
Since $f_{i}\in L_{\Lambda}^{r}$ if and only if $r\leq3/2$, it follows
that 
\[
\beta_{\nu_{i}}^{N}\left(q\right)=\begin{cases}
3\left(1-q\right) & 0\leq q\leq3/2,\\
-q & q\geq3/2.
\end{cases}
\]
By \prettyref{prop:=00005CGl_for Absoluterly continuous} $\GL_{\nu_{i}}^{N}$
is determined by $\beta_{\nu_{i}}^{N}$.

To determine in which case one have continuous or even compact embedding,
we make the following observation which is crucial in the concrete
calculation below. For a rectangular domain $R\subset\Q$, by Hölder's
inequality, we have 
\begin{equation}
\left\Vert u\right\Vert _{L_{\nu_{i}}^{2}\left(R\right)}=\int_{R}f\left|u\right|^{2}\d\Lambda\leq\left\Vert f|_{R}\right\Vert _{L_{\Lambda}^{3/2}}\left\Vert \left|u|_{R}\right|^{2}\right\Vert _{L_{\Lambda}^{3}}=\left\Vert f\right\Vert _{L_{\Lambda}^{3/2}\left(R\right)}\left\Vert u\right\Vert _{L_{\Lambda}^{6}\left(R\right)}^{2}.\label{eq: ExampleEmbeddingConstant}
\end{equation}
According to the Sobolev-Poincaré inequality, if the corresponding
norms are finite, this leads to the following continuous embeddings
$H^{N}\left(R\right)\hookrightarrow L_{\Lambda}^{6}\left(R\right)\hookrightarrow L_{\nu_{i}}^{2}\left(R\right)$,
where the embedding constant of the first embedding, denoted by $C_{1}>0$,
is independent of $R$ (see \cite[Lemma 5.10]{MR0450957}).

\begin{example}
\label{exa: Critical example_not compact} For $i=1$ the embedding
is continuous but not compact. We observe that $\left\Vert f|_{R_{n}}\right\Vert _{L_{\Lambda}^{3/2}}=\left(\int_{2^{-n-1}}^{2^{-n}}z^{-1}\d\Lambda\right)^{2/3}=\left(\log\left(2\right)\right)^{2/3}.$
This observation and \prettyref{eq: ExampleEmbeddingConstant} combined
give
\begin{align*}
\int\left|u\right|^{2}\d\nu_{1} & =\sum_{n}\int_{R_{n}}\left|u\right|^{2}\d\nu_{1}\leq\sum_{n}\left\Vert f|_{R_{n}}\right\Vert _{L_{\Lambda}^{3/2}}\left\Vert u|_{R_{n}}\right\Vert _{L_{\Lambda}^{6}}^{2}=\left(\log2\right)^{2/3}\sum_{n}\left\Vert u|_{R_{n}}\right\Vert _{L_{\Lambda}^{6}}^{2}\\
 & \leq\left(\log2\right)^{2/3}C_{1}\sum_{n}\left\Vert u|_{R_{n}}\right\Vert _{H^{N}}^{2}=\left(\log2\right)^{2/3}C_{1}\left\Vert u\right\Vert _{H^{N}}^{2}
\end{align*}
showing that we have a continuous embedding $H^{N}\hookrightarrow L_{\nu_{1}}^{2}$
and the self-adjoint Kre\u{\i}n–Feller operator is well defined.
Nevertheless, in this case the embedding $\mathcal{C}_{b}^{\infty}\hookrightarrow L_{\nu_{1}}^{2}$
is not compact. Indeed, for $Q_{n}\coloneqq\left[0,2^{-2n}\right]^{2}\times\left[2^{-2n},2^{-2n+1}\right]$,
let us consider the smooth functions $u_{n}\coloneqq\Lambda\left(Q_{n}\right)^{-1/6}\varphi_{\left\langle Q_{n}\right\rangle _{3/2},3/2}$
for $n\in\N.$ Then the sequence $\left(u_{n}\right)$ is bounded
in $H^{N}$, since by \prettyref{eq:smooth_indicator}, we have
\[
\int_{\left\langle Q_{n}\right\rangle _{3/2}}u_{n}^{2}\d\Lambda+\int_{\left\langle Q_{n}\right\rangle _{3/2}}\left|\nabla u_{n}\right|^{2}\d\Lambda\leq\frac{\Lambda\left(\left\langle Q_{n}\right\rangle _{3/2}\right)+\frac{C_{1}}{4}\left(\frac{3}{2}\right){}^{1/3}\Lambda\left(Q_{n}\right)^{1/3}}{\Lambda\left(Q_{n}\right)^{1/3}}\leq1+\frac{C_{1}}{4}\left(\frac{3}{2}\right){}^{1-2/d}.
\]
Further,
\[
\nu_{1}\left(Q_{n}\right)=\int_{2^{-2n}}^{2^{-2n+1}}\int_{0}^{z}\int_{0}^{z}\1_{[0,2^{-2n}]}(x)\1_{[0,2^{-2n}]}(y)z^{-2}\d x\d y\d z\geq2^{4n-2}\Lambda\left(Q_{n}\right)=\frac{1}{4}\Lambda\left(Q_{n}\right)^{1/3}.
\]
Since for $n\neq m$, $\left\langle Q_{n}\right\rangle _{3/2}\cap\left\langle Q_{m}\right\rangle _{3/2}=\emptyset$
we deduce 
\begin{align*}
\int\left|u_{n}-u_{m}\right|^{2}\d\nu_{1} & =\int\left|u_{n}\right|^{2}+\left|u_{m}\right|^{2}\d\nu_{1}\geq\Lambda\left(Q_{n}\right)^{-1/3}\nu_{1}(Q_{n})+\Lambda\left(Q_{m}\right)^{-1/3}\nu_{1}(Q_{n})\\
 & \geq\left(\Lambda\left(Q_{n}\right)^{-1/3}\Lambda\left(Q_{n}\right)^{1/3}+\Lambda\left(Q_{m}\right)^{-1/3}\Lambda\left(Q_{m}\right)^{1/3}\right)/4=1/2
\end{align*}
and convergence in $L_{\nu_{1}}^{2}$ is therefore excluded for any
subsequence.
\end{example}

\begin{example}
\label{exa: Critical example_ compact}For $i=2$, we prove compact
embedding. Using \prettyref{eq: ExampleEmbeddingConstant}, we have
for $Q\in\mathcal{D}$ the following continuous embeddings $H^{N}\left(Q\right)\hookrightarrow L_{\nu_{2}}^{2}\left(Q\right)$
with embedding constant $C\left\Vert f|_{Q}\right\Vert _{L_{\Lambda}^{3/2}}$,
with $C$ independent of $Q$. To show that $B_{1}\coloneqq\left\{ u\in\mathcal{C}_{c}^{\infty}(\overline{\Q}):\left\Vert u\right\Vert _{H^{N}}^{2}\leq1\right\} $
is precompact in $L_{\nu_{2}}^{2}$, we first observe that with $Q_{\ell}\coloneqq\left[0,2^{-\ell}\right]^{3}$
\begin{align}
\sup_{Q\in\mathcal{D}_{n}^{N}}\left\Vert f|_{Q}\right\Vert _{L_{\Lambda}^{3/2}}^{3/2} & \leq\left\Vert f|_{Q_{\ell}}\right\Vert _{L_{\Lambda}^{3/2}}^{3/2}=\int_{0}^{2^{-\ell}}z^{-1}\left(-\log z\right)^{-2}\d z=1/\log\left(2^{\ell}\right)\to0\;\text{for }\ell\to\infty.\label{eq:nu(Q_ell)}
\end{align}
For $u\in B_{1}$ we have
\begin{align*}
1 & \geq\left\Vert u\right\Vert _{H^{N}}^{2}=\int_{\Q}\left\Vert \nabla u\right\Vert ^{2}\d\Lambda+\int_{\Q}u^{2}\d\Lambda\geq\int_{Q_{\ell}}\left\Vert \nabla u\right\Vert ^{2}\d\Lambda+\int_{Q_{\ell}}u^{2}\d\Lambda\geq C^{-1}\int_{Q_{\ell}}u^{6}\d\Lambda\\
 & \geq C^{-1}\left\Vert f|_{Q_{\ell}}\right\Vert _{L_{\Lambda}^{3/2}}^{-1}\int_{Q_{\ell}}\left|u\right|^{2}\d\nu_{2}.
\end{align*}
For every sequence $\left(u_{n}\right)_{n}$ in $B_{1}$ we find by
the Poincaré inequality applied to $\Q\setminus Q_{\ell}$ and a diagonal
argument a subsequence that is Cauchy with respect to $L_{\nu_{2}|_{\Q\setminus Q_{\ell}}}^{2}$
for every $\ell$. Accordingly, for every $n,m>\ell$,
\[
\int\left|u_{n}-u_{m}\right|^{2}\d\nu_{2}=\int_{\Q\setminus Q_{\ell}}\left|u_{n}-u_{m}\right|^{2}\d\nu_{2}+\int_{Q_{\ell}}\left|u_{n}-u_{m}\right|^{2}\d\nu_{2}\leq\int_{\Q\setminus Q_{\ell}}\left|u_{n}-u_{m}\right|^{2}\d\nu_{2}+4C\left\Vert f|_{Q_{\ell}}\right\Vert _{L_{\Lambda}^{3/2}}.
\]
Now consider the limit superior as $m,n$ are tending to infinity
and then let $\ell\to\infty$. This proves that $\left(u_{n}\right)$
is Cauchy in $L_{\nu_{2}}^{2}$ and convergent there as well. This
shows that the embedding $H^{N}\hookrightarrow L_{\nu_{2}}^{2}$ is
compact. To finally determine the spectral dimension in this case,
in view of \prettyref{thm:MainUpperBound_General} we choose $\J\left(Q\right)\coloneqq\left\Vert f|_{Q}\right\Vert _{L_{\Lambda}^{3/2}}$
with $Q\in\mathcal{D}$ and prove $\kappa_{\J}\leq3/2$: For every
cube $Q\in\mathcal{D}_{n}^{N}$ for $n>1$ lying in $R_{n,k}\coloneqq\left\{ \left(x,y,z\right)\in\Q:k2^{-n}<z<\left(k+1\right)2^{-n}\right\} $
with $2^{n-1}\geq k\geq1$ (these are $\left(k+1\right)^{2}\asymp k^{2}$
many with $\nu\left(Q\right)>0$), we have
\begin{align*}
\left\Vert f|_{Q}\right\Vert _{L_{\Lambda}^{3/2}}^{3/2} & =\int_{Q}z^{-3}\left(\log z\right)^{-2}\d\Lambda\leq2^{-2n}\int_{k2^{-n}}^{\left(k+1\right)2^{-n}}z^{-3}\left(\log z\right)^{-2}\d z\\
 & \leq2^{-2n}\left(k2^{-n}\right)^{-2}\int_{k2^{-n}}^{\left(k+1\right)2^{-n}}z^{-1}\left(\log z\right)^{-2}\d z\\
 & =k^{-2}\left(\frac{1}{-\log(k)+\log\left(2^{n}\right)}-\frac{1}{-\log(k+1)+\log\left(2^{n}\right)}\right)\leq\frac{1}{k^{3}\left(\log(2^{n}/(k+1)\right)^{2}}.
\end{align*}
Using this estimate together with \prettyref{eq:nu(Q_ell)},

\begin{align*}
\sum_{Q\in\mathcal{D}}\left\Vert f|_{Q}\right\Vert _{L_{\Lambda}^{3/2}}^{q} & \leq\zeta\left(\frac{2q}{3}\right)+\sum_{n\in\N}\sum_{k=1}^{2^{n-1}-1}\frac{\card\left(Q\in\mathcal{D}_{n}^{N}:Q\cap R_{n,k}\neq\emptyset,\nu\left(Q\right)>0\right)k^{-2q}}{\left(\log(2^{n}/(k+1)\right)^{4q/3}}\\
 & \leq\zeta\left(\frac{2q}{3}\right)+\frac{1}{\log(2^{n}/(2))^{q4/3}}+\sum_{n\in\N_{>1}}\sum_{k=1}^{2^{n-1}-1}\frac{k^{-2q+2}}{\left(\log(2^{n}/(k+1)\right)^{4q/3}}\\
 & =\zeta\left(\frac{2q}{3}\right)+\sum_{k\in\N}k^{-2q+2}\sum_{n\geq\left\lceil \log_{2}\left(k\right)+1\right\rceil +1}\frac{1}{\left(n\log(2)-\log(k+1)\right)^{4q/3}}\\
 & \leq\zeta\left(\frac{2q}{3}\right)+\sum_{k\in\N}k^{-2q+2}\!\!\!\sum_{n\geq\left\lceil \log_{2}\left(k\right)+1\right\rceil +1}\frac{1}{\left(n-\left\lceil \log_{2}(k)+1\right\rceil \right)^{4q/3}\left(\log2\right)^{4q/3}}\\
 & \leq\zeta\left(\frac{2q}{3}\right)+\sum_{k\in\N}\frac{k^{-2q+2}}{\left(\log2\right)^{4q/3}}\sum_{n\in\N}\frac{1}{n^{q4/3}}=\zeta\left(\frac{2q}{3}\right)+\frac{\zeta\left(2q-2\right)\zeta\left(4q/3\right)}{\left(\log2\right)^{4q/3}},
\end{align*}
where we used $\log_{2}(k+1)\leq\log_{2}(k)+1$ for all $k\geq1$
and $\zeta$ denotes the Riemann $\zeta$-function. Since for all
$q>3/2$ the right-hand side is finite, we find $\underline{s}^{D}\leq\underline{s}^{N}\leq\overline{s}^{N}\leq\kappa_{\J}\leq3/2$
as a consequence of \prettyref{thm:MainUpperBound_General} with regard
to \prettyref{eq:nu(Q_ell)}. Moreover, note that there exists an
open sub-cube $Q\subset\Q$ such that $\overline{Q}\subset\mathring{\Q}$
with $\nu\left(Q\right)>0$. Since $f|_{Q}$ is bounded, an application
of the min-max principle gives $3/2=s_{\nu_{2}|_{Q}}^{D}\leq\underline{s}^{D}$
and we have $3/2=s^{D}=s^{N}$.
\end{example}

\begin{example}
\label{exa: Critical example_not continuous} In the case $i=3$,
the embedding $\mathcal{C}_{b}^{\infty}\hookrightarrow L_{\nu_{3}}^{2}$
is not continuous. Indeed, for $Q_{n}\coloneqq\left[2^{-n},2^{-n+1}\right]^{2}\times\left[2^{-n},2^{-n+1}\right]$,
let us consider the smooth functions $u_{n}\coloneqq\Lambda\left(\left\langle Q_{n}\right\rangle _{2}\right)^{-1/6}\varphi_{\left\langle Q_{n}\right\rangle _{2},2}$.
The claim follows, since on the one hand, with $C_{1}>0$ as given
in \prettyref{eq:smooth_indicator}, we have for every $n\in\N$
\begin{align*}
\left\Vert u_{n}\right\Vert _{H^{N}}^{2} & =\Lambda\left(\left\langle Q_{n}\right\rangle _{2}\right)^{-1/3}\left(\int_{\Q}\left|\nabla\varphi_{\left\langle Q_{n}\right\rangle _{2},2}\right|^{2}\d\Lambda+\int_{\Q}\left|\varphi_{\left\langle Q_{n}\right\rangle _{2},2}\right|^{2}\d\Lambda\right)\\
 & \leq\Lambda\left(\left\langle Q_{n}\right\rangle _{2}\right)^{-1/3}\left(C_{1}\Lambda\left(\left\langle Q_{n}\right\rangle _{2}\right)^{1/3}+\Lambda\left(\left\langle Q_{n}\right\rangle _{2}\right)\right)\leq C_{1}+1
\end{align*}
and on the other hand, we have 
\begin{align*}
\left\Vert u_{n}\right\Vert _{L_{\nu_{3}}^{2}}^{2} & \geq\Lambda\left(\left\langle Q_{n}\right\rangle _{2}\right)^{-1/3}\nu_{3}\left(Q_{n}\right)\geq\Lambda\left(\left\langle Q_{n}\right\rangle _{2}\right)^{-1/3}\int_{2^{-n}}^{2^{-n+1}}\log\left(1/z\right)\d z\geq\log\left(2^{n}\right)/2.
\end{align*}
\end{example}

\subsection{Non-existence of the spectral dimension\label{subsec:Non-existence-of-the}}

Here, we present an example for which upper and lower spectral dimension
differ.
\begin{example}
\label{exa: Critical example_ non-exisits}Let us consider the homogeneous
Cantor measure $\mu$ on $\left(0,1\right)$ from \cite[Example 5.5 with probability (1/2,1/2)]{KN2022}
with non-converging $L^{q}$-spectrum, for which we have $\underline{s}_{\mu}^{D/N}=3/13<3/11=\overline{s}_{\mu}^{D/N}$,
\[
\beta_{\mu}^{N}\left(q\right)=\begin{cases}
\frac{3}{8}\left(1-q\right), & q\in\left[0,1\right],\\
\frac{3}{10}\left(1-q\right), & q>1
\end{cases}\;\text{and }\underline{\beta}_{\mu}\left(q\right)\coloneqq\liminf\beta_{\mu,n}\left(q\right)=\begin{cases}
\frac{3}{10}(1-q) & \text{for }q\in\left[0,1\right],\\
\frac{3}{8}(1-q) & \text{for }q>1.
\end{cases}
\]
Take the one-dimensional Lebesgue-measure $\Lambda^{1}$ restricted
to $\left[0,1\right]$ and define the product measure on $\Q$ by
$\nu\coloneqq\mu\otimes\Lambda^{1}\otimes\Lambda^{1}$. Due to the
product structure, we have for the $L^{q}$-spectrum of $\nu$
\[
\beta_{\nu}^{N}\left(q\right)=\beta_{\mu}^{N}\left(q\right)+\beta_{\Lambda^{2}}^{N}\left(q\right)=\beta_{\mu}^{N}\left(q\right)+2\left(1-q\right),\:q\geq0,
\]
and hence $\dim_{\infty}\left(\nu\right)=2+3/10>1$. Let $\pi_{1}$
denote the projection onto the first coordinate. Then for $t\in[2,4)$
and $\GL_{n}\coloneqq\GL_{\J_{\nu,-1/3,2/t},n}^{N}$we have
\begin{align*}
\GL_{\J_{\nu,-1/3,2/t},n}^{N}\left(q\right) & =\frac{1}{\log\left(2^{n}\right)}\log\sum_{Q\in\mathcal{D}_{n}^{N}}\sup_{Q'\in\mathcal{D}\left(Q\right)}\left(\nu\left(Q'\right)^{2/t}\Lambda\left(Q'\right)^{-1/3}\right)^{q}\\
 & =\frac{1}{\log\left(2^{n}\right)}\log\sum_{Q\in\mathcal{D}_{n}^{N}}\sup_{Q'\in\mathcal{D}\left(Q\right)}\left(\mu\left(\pi_{1}Q\right)^{2/t}2^{n(1-4/t)}\right)^{q}\\
 & =\frac{1}{\log\left(2^{n}\right)}\log\sum_{Q\in\pi_{1}\left(\mathcal{D}_{n}^{N}\right)}\mu\left(Q\right)^{2q/t}2^{qn(-4/t+1)}2^{2n}=\beta_{\mu,n}\left(2q/t\right)-q(4/t-1)+2
\end{align*}
and the spectral partition function $\GL\left(q\right)\coloneqq\GL_{\J_{\nu,-1/3,2/t},n}^{N}\left(q\right)=\beta_{\mu}^{N}\left(q\right)-q(t-1)+2$
and therefore $\GL\left(q\right)\neq\liminf_{n}\GL\left(q\right)=\underline{\beta}_{\mu}^{N}\left(q\right)-q(t-1)+2$
for $q\in\R_{\geq0}\setminus\left\{ 1\right\} $. This gives $\overline{s}^{N}=q^{N}=23/13$
for the upper spectral dimension. From \cite[Prop. 3.3]{KN2023} we
know that if there exists a subsequence $\left(n_{k}\right)_{k\in\N}$
and $K>0$ such that for all $k\in\N$, $\max_{Q\in\mathcal{D}_{n_{k}}^{N}}\J\left(Q\right)^{q_{n_{k}}}\leq K2^{-n_{k}\GL_{\J,n_{k}}^{N}(0)},$where
$q_{n_{k}}$ is the unique zero of $\GL_{\J,n_{k}}^{N}$, then $\underline{h}_{\J}\leq\liminf_{k\rightarrow\infty}q_{n_{k}}$.
Combining this with the result of \prettyref{subsec:Product-measures},
a similar calculation as in \autocite[Example 5.5]{KN2022} shows
$\underline{s}^{D}\leq\underline{s}^{N}\leq\lim_{t\downarrow2}19/(8(4/t-1)+6/t)=19/11<\overline{s}^{D}=\overline{s}^{N}$
where $19/11$ is the unique zero of $\liminf_{n}\GL_{\ensuremath{\J_{\nu,-1/3,1}},n}^{N}$.
\end{example}

\printbibliography

\end{document}